\tikzset{>=stealth}
\tikzset{link/.style={column sep=1.8cm,row sep=0.16cm}}
	\def\MR#1{}
\newcommand{\bF}{\mathbb{F}}    
\newcommand{\bP}{\mathbb{P}}    
\newcommand{\kchar}{\operatorname{char}} 
\newcommand{\PGL}{\operatorname{PGL}} 
\newcommand{\Spec}{\operatorname{Spec}}
\newtheorem{thm}{Theorem}[section]
\newtheorem{prop}[thm]{Proposition}
\newtheorem{lemma}[thm]{Lemma}
\newtheorem{cor}[thm]{Corollary}
\numberwithin{equation}{section}
\theoremstyle{definition}
\newtheorem{conj}[thm]{Conjecture}
\newtheorem{defn}[thm]{Definition}
\newtheorem{eg}[thm]{Example}
\newtheorem{rmk}[thm]{Remark}
\begin{document}

\title{Frobenius nonclassical hypersurfaces}
\author{Shamil~Asgarli
	\and Lian~Duan
	\and Kuan-Wen~Lai}

\newcommand{\ContactInfo}{{
\bigskip\footnotesize

\bigskip
\noindent S.~Asgarli,
\textsc{Department of Mathematics and Computer Science \\
Santa Clara University \\ 
Santa Clara, CA 95050, USA}\par\nopagebreak
\noindent\texttt{sasgarli@scu.edu}

\bigskip
\noindent L.~Duan,
\textsc{Institute of Mathematical Sciences\\
ShanghaiTech University\\
No.393 Middle Huaxia Road, Pudong New District, 
Shanghai, China}\par\nopagebreak
\noindent\textsc{Email:} \texttt{duanlian@shanghaitech.edu.cn}

\bigskip
\noindent K.-W.~Lai,
\textsc{Department of Smart Computing and Applied Mathematics \\
Tunghai University \\
No.~1727, Sec.~4, Taiwan~Blvd., Xitun~Dist. \\
Taichung~City 407224, Taiwan}\par\nopagebreak
\noindent\textsc{Email:} \texttt{kwlai@thu.edu.tw}
}}

\begin{abstract}
A smooth hypersurface over a finite field $\mathbb{F}_q$ is called Frobenius nonclassical if the image of every geometric point under the $q$-th Frobenius endomorphism remains in the unique hyperplane tangent to the point. In this paper, we establish sharp lower and upper bounds for the degrees of such hypersurfaces, give characterizations for those achieving the maximal degrees, and prove in the surface case that they are Hermitian when their degrees attain the minimum. We also prove that the set of $\mathbb{F}_q$-rational points on a Frobenius nonclassical hypersurface form a blocking set with respect to lines, which indicates the existence of many $\mathbb{F}_q$-points.
\end{abstract}

\maketitle
\thispagestyle{titlepage}

\setcounter{tocdepth}{2}
\makeatletter
\def\l@subsection{\@tocline{2}{0pt}{2.3pc}{5pc}{}}
\makeatother

\tableofcontents

\section{Introduction}
\label{sect:intro}

In the celebrated paper \cite{SV86}, St\"ohr and Voloch used Weierstrass order-sequences to obtain an upper bound on the number of rational points for curves embedded in an arbitrary projective space over finite fields. The concept of Frobenius nonclassical curves was introduced naturally in their work as those curves whose order sequence behaves differently from ``most'' curves. Afterward, Hefez and Voloch \cite{HV90} extensively studied the properties of Frobenius nonclassical curves. In addition to showing that such curves are always nonreflexive, they also computed the precise number of $\bF_q$-points on Frobenius nonclassical \emph{plane} curves. It turns out that Frobenius nonclassical plane curves have many $\bF_q$-points. Inspired by this observation, points on Frobenius nonclassical plane curves have been used to construct new complete arcs in the field of combinatorial geometry \cites{GPTU02, Borges-Arcs, BMT14}. 

As a natural generalization of the plane curve case, our earlier work \cite{ADL21} introduced the concept of Frobenius nonclassical hypersurfaces, with a particular emphasis on the surface case. We found that if a hypersurface is \emph{not} Frobenius nonclassical, then it is easier to prove a Bertini-type theorem such as the existence of a smooth hyperplane section over the ground field \cite{ADL21}*{Theorem~2.2}, \cite{ADL24hyper}*{Theorem~1.4}. The purpose of the present paper is to provide a systematic study of Frobenius nonclassical hypersurfaces in arbitrary dimensions. We extend some of the results known for plane curves, such as the lower and upper bounds on the degree to higher dimensions, and give evidence for the abundance of $\mathbb{F}_q$-points on these hypersurfaces.

\begin{defn}
\label{defn:FrobNoncal}
Suppose that $X\subset\bP^n$ is a hypersurface defined by a polynomial $F$ over a finite field~$\bF_q$.  We say that $X$ is \emph{Frobenius nonclassical} if $F$ divides the polynomial
\footnote{
    The notation $F_{1,0}$ originates from our earlier work \cite{ADL21}, where we consider more generally the polynomial $F_{a,b} \colonequals \sum_i x_i^{q^a}(\partial F/\partial x_i)^{q^b}$.
}
$$
    F_{1,0}\colonequals 
    \sum_{i=0}^n x_i^q \frac{\partial F}{\partial x_i}.
$$
\end{defn}

Under this definition, every $\bF_q$-hyperplane is Frobenius nonclassical over $\bF_q$. Below, we present some examples that are not hyperplanes.

\begin{eg}
\label{eg:d=q+1_n-odd}
For $n = 2m+1$, the smooth hypersurface $X\subset\bP^n$ defined by
$$
    F = \sum_{i=0}^{m}\left(
        x_{2i}^qx_{2i+1} - x_{2i}x_{2i+1}^q
    \right)
$$
is Frobenius nonclassical over $\bF_q$ since $F_{1,0} = 0$. The hypersurface $X$ has several remarkable properties. For example, it is \emph{space-filling}, namely, it passes through every $\mathbb{F}_q$-point of the ambient $\bP^n$. On the other hand, if $H\subset \bP^n$ is any two-dimensional plane defined over $\mathbb{F}_q$, then either $H\subset X$, or $X\cap H$ is a union of $q+1$ distinct $\bF_q$-lines passing through a common point. This second property was carefully examined in \cite{ADL21}*{Example 3.4} in the special case when $n=3$.
\end{eg}

\begin{eg}[Hermitian varieties]
\label{eg:hermitian}
For a \emph{square $q$}, the hypersurface $X\subset\bP^n$ defined by
\begin{equation}
\label{eqn:std_Hermitian}
    F = \sum_{i=0}^rx_i^{\sqrt{q} + 1}
    \qquad\text{where}\qquad
    0\leq r\leq n
\end{equation}
is Frobenius nonclassical over $\bF_q$ as one can verify that $F_{1,0} = (F)^{\sqrt{q}}$. This is an example of a \emph{Hermitian variety} defined as follows: Consider the involution $x\mapsto x^{\sqrt{q}} = \overline{x}$ on $\bF_q$ and, for a scalar matrix $H = (h_{ij})$, write $\overline{H} = (\overline{h_{ij}})$. A Hermitian variety is a hypersurface in $\bP^n$ defined by a polynomial of the form
$$
    F = \overline{\mathbf{x}}\cdot H\cdot\mathbf{x}^t
$$
where $\mathbf{x} = (x_0,\dots,x_n)$ and $H$ is a scalar matrix that satisfies $\overline{H}^t = H\neq 0$. In fact, every such $F$ is projectively equivalent to \eqref{eqn:std_Hermitian} over $\bF_q$ by \cite{BC66}*{Theorem~4.1}.
\end{eg}

Before introducing the main results, let us set up some terminology to be used throughout the paper. Consider a scheme $X$ defined over $\mathbb{F}_q$.
\begin{itemize}
    \item $X$ is \emph{reduced} if $\mathcal{O}_X(U)$ has no nonzero nilpotents for every open subset $U\subset X$.
    \item $X$ is \emph{irreducible} if its underlying topological space is irreducible, and is \emph{geometrically irreducible} if the base change
    $
        X_{\overline{\mathbb{F}_q}}\colonequals
        X\otimes_{\Spec(\mathbb{F}_q)} \Spec(\overline{\mathbb{F}_q})
    $
    is irreducible.
    \item $X$ is \emph{geometrically integral} if it is reduced and geometrically irreducible.
\end{itemize}
Because $\bF_q$ is perfect, reducedness over the ground field $\bF_q$ and the algebraic closure $\overline{\bF_q}$ are equivalent, which is why we do not need to introduce the notion of ``geometric reducedness''. Suppose that $X=\{F=0\}\subset\mathbb{P}^{n}$ is a hypersurface defined over $\bF_q$, with the factorization of $F$ over $\overline{\bF_q}$ being
$$
    F = \prod_{i=1}^{r} T_i
    \;\in\;\overline{\bF_q}[x_0,\dots,x_n].
$$
In this situation, the above properties are the same as follows:
\begin{itemize}
    \item $X$ is reduced if and only if all the factors $T_i$ are mutually coprime.
    \item $X$ is geometrically irreducible if and only if all the factors $T_i$ coincide up to rescaling.
    \item $X$ is geometrically integral if and only if $r=1$.
\end{itemize}

\begin{rmk}
\label{rmk:geoFrobNoncal}
Suppose that $X\subset\bP^n$ is a \emph{reduced} hypersurface defined by a polynomial~$F$ over~$\bF_q$. Then $X$ is Frobenius nonclassical if and only if the $q$-th Frobenius morphism
$$
    \Phi\colon\bP^n\longrightarrow\bP^n
    : [x_0:\dots:x_n]\longmapsto[x_0^q:\dots:x_n^q]
$$
maps every $\overline{\bF_q}$-point $P\in X$ into the embedded tangent hyperplane
$$
    T_PX\colonequals\left\{
        \sum_{i=0}^n x_i\left(
            \frac{\partial F}{\partial x_i}(P)
        \right) = 0
    \right\}\subset\bP^n.
$$
Therefore, one may take this property as the definition of a Frobenius nonclassical hypersurface when $X$ is reduced.
\end{rmk}

Our first main result concerns lower bounds on the degree of a Frobenius nonclassical hypersurface. The result below was proved in \cite{HV90}*{Proposition~6} for smooth plane curves, and in \cite{BH17}*{Corollary~3.2} when the curves are irreducible.

\begin{thm}
\label{mainthm:lower-bound}
Let $X\subset\bP^n$, where $n\geq 2$, be a Frobenius nonclassical hypersurface of degree $d\geq 2$ over a finite field $\bF_q$ of characteristic $p$ which is smooth at all its $\bF_q$-points. Then 
\begin{itemize}
    \item $d\geq p+1$, and
    \item $d\geq\sqrt{q}+1$ provided that $X$ is reduced.
\end{itemize}
In the case that $X$ is a reduced curve or a smooth surface, the condition $d = \sqrt{q}+1$ is attained only if $X$ is Hermitian.
\end{thm}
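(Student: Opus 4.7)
My plan is to reduce the hypersurface setting to the plane curve case via $\bF_q$-rational hyperplane sections, and then to invoke the corresponding classical results for plane curves. The central observation is that Frobenius nonclassicality is preserved under $\bF_q$-hyperplane sections: writing an $\bF_q$-hyperplane as $H = \{x_n = \sum_{j<n} b_j x_j\}$ with $b_j \in \bF_q$ and setting $\tilde F = F|_H$ as the defining polynomial of $Y = X \cap H$, the chain rule yields $\partial_j \tilde F = (\partial_j F + b_j \partial_n F)|_H$, and combining this with the identity $(\sum_{j<n} b_j x_j)^q = \sum_{j<n} b_j x_j^q$ (valid since $b_j \in \bF_q$) gives
\[
    \tilde F_{1,0}
    = \sum_{j<n} x_j^q (\partial_j F + b_j \partial_n F)\Big|_H
    = F_{1,0}\big|_H,
\]
so $\tilde F \mid \tilde F_{1,0}$ follows from $F \mid F_{1,0}$; that is, $Y$ is Frobenius nonclassical in $H \cong \bP^{n-1}$.

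To carry out the reduction, I would choose $H$ so that the additional hypotheses on $X$ descend to $Y$: smoothness at $\bF_q$-points is preserved by taking $H$ distinct from $T_P X$ for each of the finitely many $\bF_q$-points $P \in X$, and reducedness (needed for the stronger bound) is preserved by a Bertini-type argument over $\bF_q$. Iterating down to $n = 2$, the two lower bounds become the classical facts $d \geq p+1$ for smooth plane curves (\cite{HV90}*{Proposition~6}) and $d \geq \sqrt q + 1$ for reduced plane curves (\cite{BH17}*{Corollary~3.2}), both proved via St\"ohr--Voloch-style Weierstrass order considerations.

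For the Hermitian characterization at the extremal degree $d = \sqrt q + 1$, I would upgrade the divisibility $F \mid F_{1,0}$ to the equality $F_{1,0} = F^{\sqrt q}$, which is plausible because the two sides already have matching degree $q + \sqrt q = \sqrt q \cdot d$. A monomial-by-monomial comparison in this equality then forces $F$ to take the Hermitian shape $\sum h_{ij} x_i^{\sqrt q} x_j$ with $h_{ij}^{\sqrt q} = h_{ji}$, and \cite{BC66}*{Theorem~4.1} (as in Example~\ref{eg:hermitian}) then gives projective equivalence with the standard Hermitian variety. The reduced plane curve case of the equality can be extracted from the extremal analysis underlying \cite{HV90} and \cite{BH17}. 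The smooth surface case is then bootstrapped from it: each general $\bF_q$-planar section $X \cap H$ is a reduced Frobenius nonclassical plane curve of degree $\sqrt q + 1$, hence Hermitian, giving $F_{1,0}|_H = F^{\sqrt q}|_H$. Since $F_{1,0} - F^{\sqrt q}$ then vanishes on many more $\bF_q$-hyperplanes than its degree $q + \sqrt q$, it must be identically zero, so $F$ itself is Hermitian.

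The main obstacle I anticipate is the Bertini-type step under the Frobenius nonclassicality hypothesis: the Bertini theorems of the authors' earlier work \cite{ADL22hyper} are proved precisely for hypersurfaces that are \emph{not} Frobenius nonclassical, so an independent argument is required to exhibit $\bF_q$-hyperplanes $H$ for which $X \cap H$ is reduced and smooth at its $\bF_q$-points. A counting argument comparing the number of ``bad'' $\bF_q$-hyperplanes to $|(\bP^n)^*(\bF_q)|$ should suffice for large $q$, while small values of $q$ may demand a more delicate treatment.
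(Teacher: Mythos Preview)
Your overall strategy---reduce to the plane curve case via $\bF_q$-hyperplane sections---is exactly the paper's, but several steps have genuine gaps that the paper resolves with nontrivial extra work.

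\textbf{The hyperplane section step is not merely a large-$q$ counting issue.} For the bound $d\geq p+1$, you propose choosing $H$ distinct from $T_PX$ for each $P\in X(\bF_q)$. But a priori $\#X(\bF_q)$ may exceed $\#(\bP^n)^*(\bF_q)$, so \emph{no} such $H$ need exist. The paper confronts this head-on (Theorem~\ref{thm:lower-bound_p+1}): if every $\bF_q$-hyperplane is tangent to $X$ at some $\bF_q$-point, the Gauss map is surjective onto $(\bP^n)^*(\bF_q)$ (minus at most one point), forcing $\#X(\bF_q)\geq\#\bP^n(\bF_q)-1$, and then a separate lemma (Lemma~\ref{lemma:almost_space-filling}) gives $d\geq q+1$. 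This dichotomy is an essential idea your plan is missing.

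\textbf{The cited curve results require more than your sections deliver.} \cite{HV90}*{Proposition~6} assumes the curve is \emph{smooth}, not merely smooth at $\bF_q$-points; the paper proves the weaker-hypothesis version from scratch (Lemma~\ref{lemma:lower-bound_p+1_curve}) using nonreflexivity at a smooth non-$\bF_q$-point. More seriously, \cite{BH17}*{Corollary~3.2} assumes the curve is \emph{geometrically irreducible}, not merely reduced. Your reduced curve section need not be geometrically irreducible, and the paper spends Lemmas~\ref{lemma:count-of-non-transverse-lines-geom-irred}--\ref{lemma:tran-line_Fq-only} producing a transverse $\bF_q$-line (available since $d\leq\sqrt q$), which forces all intersection points with an $\bF_q$-irreducible component to be $\bF_q$-rational and hence yields geometric irreducibility. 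This is the crux of the $\sqrt q+1$ bound and is absent from your plan.

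\textbf{The Hermitian step via $F_{1,0}=F^{\sqrt q}$ is not how the paper proceeds for surfaces.} Your ``monomial-by-monomial comparison'' to deduce the Hermitian shape from $F_{1,0}=F^{\sqrt q}$ is not evident; the paper does prove this implication (Proposition~\ref{prop:F10=powerF}) but only by invoking the $F$-pure threshold results of \cite{KKPSSVW22}, and only in odd characteristic, as \emph{evidence} for the general conjecture. The actual surface proof (Proposition~\ref{prop:Hermitian-surf}) is more direct: four linearly independent good planes force $F=G+x_0x_1x_2x_3R$ with $G$ Hermitian, and then the remaining $q^3-q^2\sqrt q-4$ good planes each divide $R$, forcing $R=0$ by degree count. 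Your variant---showing $F_{1,0}-F^{\sqrt q}$ vanishes on many hyperplanes---also needs care, since the curve case only gives $(F_{1,0})|_H = c_H\cdot (F|_H)^{\sqrt q}$ with an $H$-dependent scalar.
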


The two lower bounds in Theorem~\ref{mainthm:lower-bound} are sharp due to Examples~\ref{eg:d=q+1_n-odd} and \ref{eg:hermitian}, respectively. Note that the bound $d\geq p+1$ is not redundant with respect to $d\geq\sqrt{q}+1$ as the former is stronger than the latter when $q = p$. The reducedness for the second bound is necessary because, if the assumption is dropped, then one can easily construct counterexamples using $\bF_q$-hyperplanes and $p$-th powers of pointless hypersurfaces; see Remark~\ref{rmk:pointless_example} for concrete examples of pointless hypersurfaces. We will prove the two bounds respectively in Theorems~\ref{thm:lower-bound_p+1} and \ref{thm:lower-bound_sqrt(q)+1}. For the last statement, the curve case is essentially established by Borges and Homma \cite{BH17}*{Corollary~3.2} as mentioned above, but we still need to treat the case when the curve is potentially reducible; see Lemma~\ref{lemma:min-degree-curve}. We treat the surface case in Proposition~\ref{prop:Hermitian-surf}. It is worth mentioning that Homma and Kim \cite{HK16} gave an alternative characterization of Hermitian surfaces via the number of rational points.

Our next main result concerns upper bounds on the degree.

\begin{thm}
\label{mainthm:upper-bound}
Suppose that $X\subset\bP^n$, where $n\geq 2$, is a smooth Frobenius nonclassical hypersurface of degree~$d$ over $\bF_q$. Then $d\leq q+2$ and, when $d=q+1$ or $d=q+2$, its defining polynomial~$F$ has the following forms:
\begin{enumerate}[label=\textup{(\arabic*)}]
\item\label{main:d=q+1_n-odd}
$d = q+1$ and $n$ is odd if and only if there exists a nondegenerate skew-symmetric matrix $(A_{ij})$ with entries in $\bF_q$ and zero diagonal such that
$$
    F = \sum_{i,j=0}^nx_i^qA_{ij}x_j.
$$
\item\label{main:d=q+1_n-even}
$d = q+1$ and $n$ is even if and only if $p = 2$ and, up to a $\PGL_{n+1}(\bF_q)$-action,
$$
    F_{1,0} = x_0^{q-1}F
    \qquad\text{and}\qquad
    F = x_0G
    + \sum_{i,j=1}^nx_i^qB_{ij}x_j,
$$
where we have $\partial G/\partial x_0 = 0$, and $(B_{ij})_{1\leq i,j\leq n}$ is a nondegenerate skew-symmetric matrix with entries in $\bF_q$ and zero diagonal.
\item\label{main:d=q+2}
$d = q+2$ if and only if $p = n = 2$ and, upon rescaling $F$ by a nonzero constant,
$$
    F = x_0x_1x_2(x_0^{q-1} + x_1^{q-1} + x_2^{q-1}) + G(x_0^2,x_1^2,x_2^2)
$$
for some polynomial $G$.
\end{enumerate}
Furthermore, condition \ref{main:d=q+1_n-odd} or \ref{main:d=q+2} occurs if and only if $F_{1,0}$ is the zero polynomial.
\end{thm}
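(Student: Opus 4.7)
The plan is to exploit the divisibility $F \mid F_{1,0}$ imposed by Frobenius nonclassicality. Since $\deg F_{1,0} \leq q + d - 1$, the quotient $L \colonequals F_{1,0}/F$, when nonzero, is a polynomial of degree exactly $q - 1$. The argument naturally splits according to whether $F_{1,0}$ vanishes identically (corresponding to cases \ref{main:d=q+1_n-odd} and \ref{main:d=q+2}) or not (corresponding to case \ref{main:d=q+1_n-even}); the final clause of the theorem then follows directly from this structural dichotomy.

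For the upper bound $d \leq q + 2$, I would compare the leading monomials of $F_{1,0}$ against those of $L \cdot F$ (or require all terms to cancel, in the vanishing case). The key observation is that each monomial of $x_i^q \partial_i F$ has $x_i$-exponent at least $q$; this forces severe constraints on the exponent vectors of monomials of $F$ in order for the divisibility to hold. Smoothness of $X$ prohibits all $\partial_i F$ from sharing a common projective zero, so every variable must appear nontrivially in $F$; combining these constraints with a pigeonhole on the number of variables yields the bound.

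For the characterizations, I would treat each case separately. In case \ref{main:d=q+1_n-odd}, $F_{1,0} = 0$ with $d = q + 1$; examining the syzygy $\sum x_i^q \partial_i F = 0$ monomial-by-monomial forces every monomial $x^\alpha$ of $F$ with $\alpha_i \not\equiv 0 \pmod{p}$ to satisfy $\alpha = e_i + q e_j$ for some $j$. Purely $p$-th-power monomials are excluded because $p \nmid d = q + 1$, so $F = \sum A_{ij} x_i^q x_j$; the syzygy then translates into skew-symmetry and zero diagonal of $A$, while smoothness is equivalent to $\det A \neq 0$, forcing $n + 1$ to be even. Case \ref{main:d=q+2} is structurally similar but requires more: since $p \mid d = q + 2$ only when $p = 2$, Euler's identity $\sum x_i \partial_i F = 0$ provides in that characteristic a second syzygy. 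Combining the two syzygies with smoothness and the just-proved bound $d \leq q + 2$ forces $n = 2$ and the stated explicit form, with $G(x_0^2, x_1^2, x_2^2)$ absorbing the $p$-th-power monomials that the degree-parity argument excluded in case \ref{main:d=q+1_n-odd}. Case \ref{main:d=q+1_n-even} treats $F_{1,0} \neq 0$: a monomial analysis of the degree-$(q-1)$ quotient $L$, using smoothness, pins $L$ down to a pure $(q - 1)$-th power of a linear form defined over $\bF_q$. After normalizing $L = x_0^{q-1}$ by a $\PGL_{n+1}(\bF_q)$ transformation, the identity $F_{1,0} = x_0^{q-1} F$ decomposes $F$ into an $x_0$-dependent part $x_0 G$ with $\partial G/\partial x_0 = 0$ and an $x_0$-independent part handled by the case \ref{main:d=q+1_n-odd} analysis on the remaining $n$ variables (forcing $n$ even); comparing $x_0$-exponents modulo $p$ in the identity then forces $p = 2$.

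I expect the main obstacle to be the identification of $L$ in case \ref{main:d=q+1_n-even}: a priori $L$ is merely a polynomial of degree $q - 1$, and rigorously showing that it must be a pure $(q - 1)$-th power of a single linear form definable over $\bF_q$ requires careful interaction between the Frobenius-twisted structure of $F_{1,0}$ and the Jacobian ideal of $F$. Once this characterization is in hand, the remaining decomposition and the characteristic restriction $p = 2$ follow cleanly from the $x_0$-exponent parity argument sketched above.
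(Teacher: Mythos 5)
Your proposal identifies the right structural dichotomy ($F_{1,0}=0$ versus $F_{1,0}\neq 0$) and correctly flags that pinning down the quotient $L=F_{1,0}/F$ as a pure $(q-1)$-th power is delicate, but the central engine is missing. The paper's proof runs entirely on Koszul homology: smoothness of $X$ combined with Cohen--Macaulayness of $\overline{R}=R/(F)$ gives $\operatorname{codim}(\overline{J_F})=\operatorname{depth}(\overline{J_F})=n$, hence $H_2(K_\bullet(\overline{J_F}))=0$. Applying this to the $2$-cycle $\mathbf{x}\wedge\mathbf{x}^{\,q}$ (which lies in $\ker\delta_2$ precisely because $X$ is Frobenius nonclassical and Euler's formula holds) produces the explicit relations $x_ix_j^q-x_i^qx_j\equiv\sum_k\gamma_{ijk}\,\partial F/\partial x_k\pmod F$, and comparing degrees is what yields $d\leq q+2$. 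Your sketch for the upper bound (``each monomial of $x_i^q\partial_iF$ has $x_i$-exponent $\geq q$, \dots, pigeonhole on the number of variables'') uses smoothness only as the set-theoretic condition that the $\partial_iF$ have no common projective zero; that alone does not produce a quantitative syzygy, and I don't see how any monomial-by-monomial comparison of $F_{1,0}$ against $L\cdot F$ bounds $d$. The homological content (depth equals codimension) is essential and is not recoverable from the observation you cite.

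The secondary steps inherit the same gap. In case (1), your claim that $F_{1,0}=0$ forces every relevant monomial to be $x_i^qx_j$ is not immediate from a monomial comparison (for instance the shape $(q-1)e_j+2e_i$ has to be ruled out, and doing so by hand is fiddly), whereas the paper gets it in one stroke from $H_1(K_\bullet(\mathbf{x}^q))=0$ (regularity of $\mathbf{x}^q$), which gives $\partial F/\partial x_j=\sum_ix_i^qG_{ij}$ with $G_{ij}$ skew-symmetric constants, and Euler then hands you $F=\sum x_i^qA_{ij}x_j$. In case (3), your ``combining the two syzygies with smoothness forces $n=2$'' has no mechanism behind it; the paper gets $n\leq 2$ by the dimension count $\binom{n+1}{2}\leq n+1$ applied to the Koszul relation. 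In case (2), the identification $L=x_0^{q-1}$ drops out of a short linear-algebra lemma (\ref{lemma:x^q-ax}) once one knows $x_j^q-\alpha x_j=\sum_i\beta_{ij}\,\partial F/\partial x_i$ with $(\beta_{ij})$ a constant skew-symmetric matrix — again a consequence of $H_1(K_\bullet(J_F))=0$. So the obstacle you singled out as ``main'' is actually the easy consequence once the Koszul relations are in place; the genuine gap is establishing those relations.
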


Example~\ref{eg:d=q+1_n-odd} is a special case of Theorem~\ref{mainthm:upper-bound}~\ref{main:d=q+1_n-odd}. For concrete examples that are classified by \ref{main:d=q+1_n-even} and \ref{main:d=q+2} in the theorem, one can find them in Examples~\ref{eg:d=q+1_n-even} and \ref{eg:d=q+2}, respectively. As an immediate consequence of Theorems~\ref{mainthm:lower-bound} and \ref{mainthm:upper-bound}:

\begin{cor}
Let $X\subset\bP^n$, where $n\geq 2$, be a smooth Frobenius nonclassical hypersurface of degree~$d\geq 2$ over a finite field~$\bF_q$ of characteristic~$p$. Then
$$
    \max\{p+1, \sqrt{q}+1\}\leq d\leq
    \begin{cases}
        q+1 & \text{if}\quad p\text{ is odd}, \\
        q+2 & \text{if}\quad p=2.
    \end{cases}
$$
In particular, if $p$ is odd and $\bF_q$ is a prime field (namely $q=p$), then $d = p+1$.
\end{cor}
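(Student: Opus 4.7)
The plan is to chain Theorems~\ref{mainthm:lower-bound} and~\ref{mainthm:upper-bound} with only minimal extra work.

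First, smoothness of $X$ guarantees that $X$ is reduced and smooth at every $\bF_q$-point, so Theorem~\ref{mainthm:lower-bound} applies in full and yields both $d\geq p+1$ and $d\geq\sqrt{q}+1$; these combine into the stated lower bound $d\geq\max\{p+1,\sqrt{q}+1\}$. For the upper bound, Theorem~\ref{mainthm:upper-bound} supplies $d\leq q+2$ unconditionally, and its part~\ref{main:d=q+2} shows that the extremal case $d=q+2$ occurs only when $p=2$; hence for odd $p$ we recover the sharper bound $d\leq q+1$.

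For the ``in particular'' statement I would specialize the same pair of bounds to the prime base field. If $X$ is Frobenius nonclassical over $\bF_p$, then Theorems~\ref{mainthm:lower-bound} and~\ref{mainthm:upper-bound} applied with $q$ replaced by $p$ give $p+1=\max\{p+1,\sqrt{p}+1\}\leq d\leq p+1$ (the upper bound uses that $p$ is odd), which pinches $d$ to exactly $p+1$. The converse direction amounts to showing that when $d=p+1$ the divisibility $F\mid\sum_i x_i^p\,\partial F/\partial x_i$ holds; I expect this to follow from a degree count (the right-hand side has degree $2p<2d$) combined with Euler's identity $dF=\sum_i x_i\,\partial F/\partial x_i$, which in characteristic $p$ collapses to $F=\sum_i x_i\,\partial F/\partial x_i$ precisely because $d=p+1\equiv 1\pmod p$; matching this against the standing divisibility $F\mid\sum_i x_i^q\,\partial F/\partial x_i$ should produce the $\bF_p$-level relation.

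I anticipate no genuine obstacle: the entire argument is essentially bookkeeping between the two main theorems, with the converse in the ``in particular'' clause being the only place where a short characteristic-$p$ computation is required.
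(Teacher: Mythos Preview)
Your derivation of the displayed inequalities is correct and is exactly what the paper intends: it introduces the corollary as ``an immediate consequence'' of Theorems~\ref{mainthm:lower-bound} and~\ref{mainthm:upper-bound} and gives no proof at all. The forward direction of the ``in particular'' clause is likewise correctly obtained by specializing those bounds to $q=p$.

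The gap is entirely in your handling of the converse. Your sketch (Euler's identity plus a degree count, matched against the divisibility over $\bF_q$) is not an argument, and in fact it cannot be completed: under the natural reading of ``vice versa'' the claim is false. Take the smooth Hermitian hypersurface $F=\sum_{i=0}^n x_i^{p+1}$ with $p$ odd and $n\geq 2$; it is Frobenius nonclassical over $\bF_{q}$ with $q=p^2$ (Example~\ref{eg:hermitian}) and has $d=p+1$. But
\[
\sum_{i} x_i^{p}\,\frac{\partial F}{\partial x_i}
=\sum_{i} x_i^{2p}
=\Bigl(\sum_{i} x_i^{2}\Bigr)^{p},
\]
and the irreducible form $F$ of degree $p+1>2$ cannot divide any power of the quadric $\sum_i x_i^{2}$. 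So the standing hypotheses of the corollary together with $d=p+1$ do \emph{not} force $X$ to be Frobenius nonclassical over $\bF_p$. The ``vice versa'' is therefore best read as a remark that the value $d=p+1$ is attained over $\bF_p$ (Example~\ref{eg:d=q+1_n-odd} with $q=p$ and $n$ odd), not as a genuine biconditional; in any case, your proposed route to it does not work and you should not expect a ``short characteristic-$p$ computation'' to produce it.
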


Examples characterized by Theorem~\ref{mainthm:upper-bound}~\ref{main:d=q+1_n-odd} are \emph{multi-Frobenius nonclassical} in the following sense: Let $X = \{F = 0\}\subset\bP^n$ be such an example. If we consider $X$ as a variety over the quadratic extension $\bF_{q^2}$, then
$$
    F_{1,0}\colonequals
    \sum_{i=0}^nx_i^{q^2}\frac{\partial F}{\partial x_i} = \sum_{i,j=0}^n x_i^qA_{ij}x_j^{q^2} = -F^q.
$$
This shows that $X$ is Frobenius nonclassical over not only $\bF_q$ but also $\bF_{q^2}$. In fact, if we pick an $\alpha\in\bF_{q^2}\setminus\{0\}$ that satisfies $\alpha^q = -\alpha$, then $X = \{\alpha F = 0\}$ is a Hermitian variety over $\bF_{q^2}$. The classification of multi-Frobenius nonclassical plane curves was carried out by Borges in ~\cite{Bor09}.

Notice that not every Frobenius nonclassical hypersurface is Hermitian. Example~\ref{eg:d=q+2}, which belongs to Theorem~\ref{mainthm:upper-bound}~\ref{main:d=q+2}, provides smooth such examples in characteristic~$2$. For smooth examples in arbitrary characteristics, one can consider
$$
    F = \sum_{i=0}^nx_i^{q^r+\dots+q+1}
    \qquad\text{with}\qquad
    r\geq 2
$$
and view it as a polynomial over $\bF_{q^{r+1}}$. Then
$
    F_{1,0} = \sum_{i=0}^nx_i^{q^{r+1}+\dots+q} = F^{q}.
$
Therefore, the hypersurface $X = \{F = 0\}\subset\bP^n$ is Frobenius nonclassical over $\bF_{q^{r+1}}$ and it is clearly not Hermitian over any ground field.

Hermitian varieties, which include examples in Theorem~\ref{mainthm:upper-bound}~\ref{main:d=q+1_n-odd} as discussed above, are defined by polynomials of the form
$$
    F = \sum_{i=0}^nx_i^{q'}L_i
$$
where $L_0,\dots,L_n$ are linear polynomials, and $q'$ is a power of the characteristic of the ground field. Such polynomials are called \emph{Frobenius forms} in \cite{KKPSSVW22}. In the last part of Section~\ref{sect:degree_lower_bound}, we show that Frobenius nonclassical hypersurfaces over $\bF_q$ of degree $\sqrt{q}+1$ are defined by Frobenius forms under certain assumptions, and then prove that they must be Hermitian in that situation. This result, together with Theorem~\ref{mainthm:lower-bound}, suggests the following conjecture:

\begin{conj}
\label{conj:hermitian}
A Frobenius nonclassical hypersurface $X\subset\bP^n$, where $n\geq 2$, over $\bF_q$ of degree $\sqrt{q}+1$ is Hermitian provided that it is reduced and smooth at all its $\bF_q$-points.
\end{conj}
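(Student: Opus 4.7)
\emph{Proof proposal.} The plan is to proceed by induction on $n$, with Lemma~\ref{lemma:min-degree-curve} providing the base case of plane curves ($n=2$). For the inductive step with $n\geq 3$, I would cut $X$ by a carefully chosen $\bF_q$-hyperplane $H\subset\bP^n$ and study the section $Y\colonequals X\cap H\subset H\cong\bP^{n-1}$. The Frobenius nonclassical property descends to $Y$: at a smooth point $P\in Y$, one has $T_PY=T_PX\cap H$, so the conditions $\Phi(P)\in T_PX$ (from $X$ being Frobenius nonclassical) and $\Phi(P)\in H$ (since $H$ is defined over $\bF_q$) jointly give $\Phi(P)\in T_PY$. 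If moreover $Y$ has degree $\sqrt{q}+1$, is reduced, and is smooth at its $\bF_q$-points, then the inductive hypothesis yields that $Y$ is Hermitian inside $H$.

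The first technical step is to secure such a hyperplane $H$. One needs $H\not\subset X$ (to preserve the degree), $H$ to avoid the singular locus of $X$ (which may contain non-$\bF_q$ geometric points), and $Y$ to remain smooth at every $\bF_q$-point. Over an infinite field this is standard Bertini; over $\bF_q$ it becomes a finite-field Bertini-type problem. The techniques from the authors' earlier work~\cites{ADL21,ADL22hyper}, supplemented by point counts on the loci of ``bad'' hyperplanes, should suffice provided that $q$ is not too small relative to $d$ and $n$.

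The second step is to promote the Hermitian structure of $Y$ to one on $X$ itself. Here I would aim to show that $F$ is a \emph{Frobenius form}, i.e.\ $F=\sum_{i=0}^n x_i^{\sqrt{q}}L_i$ for linear forms $L_0,\dots,L_n\in\bF_q[x_0,\dots,x_n]$, and then invoke the result announced at the end of Section~\ref{sect:degree_lower_bound} that Frobenius nonclassical Frobenius forms of degree $\sqrt{q}+1$ are Hermitian. The Frobenius form structure should transfer from sufficiently many Hermitian restrictions $F|_H$ by piecing together their linear parts across varying $H$. Alternatively, one could attempt to prove directly that the Frobenius nonclassical condition forces $F_{1,0}=cF^{\sqrt{q}}$ for some nonzero $c\in\bF_q$, which is equivalent to $F$ being a Frobenius form.

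The hard part will be the lack of a flexible Bertini theorem over $\bF_q$: when $q$ is small relative to $\deg X$ and to the dimension of $\Sing(X)$, a suitable $\bF_q$-hyperplane might simply fail to exist, forcing a case-by-case or characteristic-specific analysis reminiscent of the work behind Theorem~\ref{mainthm:upper-bound}. Overcoming this, either via a sharper finite-field Bertini result or via a direct structural study of the degree-$(q-1)$ quotient $F_{1,0}/F$, appears to be the crux of the conjecture.
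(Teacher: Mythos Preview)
This statement is Conjecture~\ref{conj:hermitian} in the paper: it is posed as an open problem, not proved. The paper establishes only the cases $n=2$ (Lemma~\ref{lemma:min-degree-curve}) and $n=3$ with $X$ \emph{globally smooth} (Proposition~\ref{prop:Hermitian-surf}), the latter hypothesis being strictly stronger than the conjecture's ``smooth at $\bF_q$-points''. There is therefore no proof in the paper against which to compare your attempt.

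Your outline is a reasonable strategy and is close in spirit to the paper's argument for smooth surfaces, which cuts by many good $\bF_q$-planes, applies the curve case to each section, and then reconstructs $F$ from its Hermitian restrictions. You also correctly isolate the main obstruction: over $\bF_q$ there is no Bertini theorem guaranteeing that a hyperplane section remains reduced and smooth at $\bF_q$-points once $X$ is allowed to be singular away from its $\bF_q$-points, and this is precisely why the paper stops at the smooth surface case. Two smaller issues deserve mention. First, your assertion that $F_{1,0}=cF^{\sqrt{q}}$ is \emph{equivalent} to $F$ being a Frobenius form is not correct: Proposition~\ref{prop:F10=powerF} gives only one implication, and for a general Frobenius form $F=\mathbf{x}^{\sqrt{q}}M\mathbf{x}^t$ one has $F_{1,0}=\mathbf{x}^{\sqrt{q}}M(\mathbf{x}^{q})^t$, which is a scalar multiple of $F^{\sqrt{q}}$ only when $M$ is already Hermitian or skew-Hermitian. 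Second, the result you invoke at the end of Section~\ref{sect:degree_lower_bound} (Corollary~\ref{cor:frobForm_Herm}, via Lemma~\ref{lemma:frobForm_Herm_skewHerm}) requires $p\neq 2$, so even granting the rest of your plan, characteristic~$2$ would remain untreated.
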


Smooth Frobenius nonclassical hypersurfaces are nonreflexive \cite{ADL21}*{Theorem~4.5}, so their degrees are either congruent to $0$ or $1$ modulo the characteristic of the ground field \cite{Kle86}*{page 191}. For curves, the possibility of being congruent to $0$ has been excluded by Pardini \cite{Par86}*{Corollary~2.2}, so it is natural to ask if the same condition holds in higher dimensions. We prove that this is true with certain additional assumptions. To be more concrete, we say that a hypersurface $X\subset\bP^n$ over $\bF_q$ has \emph{separated variables} if, up to a projective transformation over $\bF_q$, its defining polynomial has the form
$$
	F(x_0, \ldots, x_n)=G(x_0, \ldots, x_m) + H(x_{m+1},\ldots, x_n)
$$
for some $m\in\{0,\dots, n-1\}$. Frobenius nonclassical components of plane curves with separated variables were studied by Borges in \cite{Bor16}.

Our final result establishes the congruence condition on the degree of a Frobenius nonclassical hypersurface with separated variables.

\begin{thm}
\label{mainthm:sep_var_d=1-mod-p}
Let $X = \{F=0\}\subset\bP^n$ be a smooth Frobenius nonclassical hypersurface of degree $d$ over $\bF_q$ with separated variables. Then $d\equiv 1 \pmod{p}$ where $p = \operatorname{char}(\bF_q)$.
\end{thm}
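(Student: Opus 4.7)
My plan is to split on whether $F_{1,0}$ vanishes identically. If it does, Theorem~\ref{mainthm:upper-bound} leaves only case~\ref{main:d=q+1_n-odd} (giving $d = q+1 \equiv 1 \pmod p$) or case~\ref{main:d=q+2} (where $n = 2$, a smooth nonreflexive plane curve, to which Pardini's \cite{Par86}*{Corollary~2.2} recalled in the introduction applies to exclude $d \equiv 0 \pmod p$). So I may assume $F_{1,0} \neq 0$ and write $F_{1,0} = LF$ with $\deg L = q - 1$.

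Write $F = G(\mathbf{y}) + H(\mathbf{z})$ with $\mathbf{y} = (x_0,\ldots,x_m)$ and $\mathbf{z} = (x_{m+1},\ldots,x_n)$; both $G$ and $H$ are nonzero homogeneous of degree $d$ (otherwise $X$ would be a cone, hence singular). Setting $\mathbf{z} = 0$ in $F_{1,0} = LF$ yields $G_{1,0} = L(\mathbf{y}, 0)\, G$, so $G \mid G_{1,0}$; symmetrically $H \mid H_{1,0}$. Moreover the smoothness of $F$ descends to $\{G = 0\} \subset \bP^m$ and $\{H = 0\} \subset \bP^{n-m-1}$: a singular point $P$ of $G$ would lift by appending zero $\mathbf{z}$-coordinates to a singular point $(P, 0)$ of $F$, since each partial of $F$ vanishes there.

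The key technical step is a bidegree analysis. Decompose $L = L_G(\mathbf{y}) + L_H(\mathbf{z}) + L_{\mathrm{mixed}}$ into pure-$\mathbf{y}$, pure-$\mathbf{z}$, and mixed parts. Since $F_{1,0} = G_{1,0}(\mathbf{y}) + H_{1,0}(\mathbf{z})$ has no mixed bidegree, expanding $F_{1,0} = LF$ yields the mixed-part identity
\[
    L_G H + L_H G + L_{\mathrm{mixed}}(G + H) = 0.
\]
Isolating the bidegree-$(q-1, d)$ part---to which only $L_G H$ and the summand $L^{(q-1-d, d)} G$ of $L_{\mathrm{mixed}} G$ contribute---and using $\gcd(G, H) = 1$ forces $G \mid L_G$. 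Hence $L_G = G \tilde L_G$ and $G_{1,0} = L_G G = G^2 \tilde L_G$, i.e., $G^2 \mid G_{1,0}$ (and analogously $H^2 \mid H_{1,0}$ when $H_{1,0} \neq 0$). Comparing degrees ($2d \leq d + q - 1$) gives $d \leq q - 1$; in particular case~\ref{main:d=q+1_n-even} of Theorem~\ref{mainthm:upper-bound} is incompatible with the separated-variable setting.

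For the conclusion, use the identity $\partial G_{1,0}/\partial x_i = \sum_k x_k^q \, \partial^2 G / \partial x_i \partial x_k$ (valid in characteristic $p$). The divisibility $G^2 \mid G_{1,0}$ makes each $\partial G_{1,0}/\partial x_i$ vanish on $\{G = 0\}$, so $\sum_k P_k^q \, \partial^2 G / \partial x_i \partial x_k (P) = 0$ for every $P \in \{G = 0\}$. Pick any $\bF_q$-rational point $P \in \{G = 0\}(\bF_q)$: this exists by the blocking-set theorem proved later in this paper when $m \geq 2$, and by the fact that $G \mid G_{1,0}$ forces all $d$ roots of $G$ to be $\bF_q$-rational when $m = 1$. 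For such a rational point, $P_k^q = P_k$, so applying Euler's relation to $\partial G/\partial x_i$ (degree $d-1$) reduces the displayed identity to $(d - 1)\, \partial G/\partial x_i (P) = 0$; smoothness of $G$ at $P$ then forces $d \equiv 1 \pmod p$. The main technical obstacle is the bidegree analysis producing $G^2 \mid G_{1,0}$. The boundary cases $m = 0$ or $m = n - 1$---where $G$ or $H$ is a single-variable monomial, so $\{G = 0\}$ or $\{H = 0\}$ is empty in $\bP^0$---require a short separate argument: the nonvanishing of $G_{1,0} = dc\, y_0^{q+d-1}$ (say) directly forces $p \nmid d$, and the nonreflexivity of smooth Frobenius nonclassical hypersurfaces (\cite{ADL21}*{Theorem~4.5}, \cite{Kle86}*{page 191}) then yields $d \equiv 1 \pmod p$.
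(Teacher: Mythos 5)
Your overall strategy is genuinely different from the paper's. The paper's proof for $F_{1,0}\neq 0$ restricts the identity $FR=F_{1,0}$ to a generic line over the function field $\bF_q(u_0,\dots,u_n)$, reduces to a one-variable polynomial identity, and applies Lemma~\ref{lemma:poly-arith} to conclude that $d\mid q-1$; this is then combined with nonreflexivity and Kleiman to get $d\equiv 1\pmod p$. Your approach instead does a bidegree decomposition of $L=F_{1,0}/F$ to show $G\mid L_G$, hence $G^2\mid G_{1,0}$, and then uses the second-derivative identity $\partial G_{1,0}/\partial x_i=\sum_k x_k^q\,\partial^2 G/\partial x_i\partial x_k$ at an $\bF_q$-point of $\{G=0\}$ to force $(d-1)\partial G/\partial x_i(P)=0$, which kills $d-1$ modulo $p$ by smoothness. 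This is a nice argument and avoids the nonreflexivity machinery entirely in the main case; I verified the key steps (the isolation of the bidegree $(q-1,d)$ part, the coprimality $\gcd(G,H)=1$, and the handling of $d=q-1$) and they go through. Two citation points, though: you should cite Corollary~\ref{cor:rational-point} (and your ad hoc $m=1$ argument) for the existence of an $\bF_q$-point on $\{G=0\}$, not Proposition~\ref{prop:blocking}, since the latter has the hypothesis $d\not\equiv 0\pmod p$---precisely what you are trying to prove.

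The genuine gap is in the case $F_{1,0}=0$. You invoke Pardini's result, as recalled in the introduction, to exclude case~\ref{main:d=q+2} of Theorem~\ref{mainthm:upper-bound}. But that case is nonempty: Example~\ref{eg:d=q+2} exhibits a smooth Frobenius nonclassical plane curve over $\bF_2$ of degree $q+2=4$, which by \cite{ADL21}*{Theorem~4.5} is nonreflexive, and has $d\equiv 0\pmod 2$. So Pardini cannot be ruling out $d\equiv 0\pmod p$ for smooth nonreflexive plane curves unconditionally in characteristic $2$, and case~\ref{main:d=q+2} occurs exactly when $p=2$. Thus your citation of Pardini does not close this case. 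What actually eliminates it under the separated-variables hypothesis is Lemma~\ref{lemma:d=q+2_not_sep_var}, which shows directly that the Proposition~\ref{prop:smooth_d=q+2} curves cannot be written as $G(x_0,x_1)+cx_2^{q+2}$; this is the step you are missing. With that substitution in place, and the citation fix above, your argument would be a correct alternative proof.
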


\vspace{7pt}

\noindent\textbf{Organization of the paper.} The present paper is organized as follows. In Section~\ref{sect:rational-point_univ-lower-bound}, we discuss the existence and multitude of $\bF_q$-rational points and prove the lower bound $d\geq p+1$. Section ~\ref{sect:degree_lower_bound} is devoted to the proof of the lower bound $d\geq \sqrt{q}+1$. In the same section, we also show that smooth Frobenius nonclassical surfaces of degree $\sqrt{q}+1$ are Hermitian and provide evidence about this phenomenon in higher dimensions. The proof of the upper bound $d\leq q+2$ is given in Section~\ref{sect:degree_upper_bound} along with the classification result in the cases $d=q+2$ and $d=q+1$. Finally, Section~\ref{sect:sep-var} is devoted to the study of Frobenius nonclassical hypersurfaces with separated variables and contains the proof of Theorem~ \ref{mainthm:sep_var_d=1-mod-p}.

It follows from Definition~\ref{defn:FrobNoncal} that a homogeneous polynomial $F\in\bF_q[x_0,\dots,x_n]$ defines a Frobenius nonclassical hypersurface over $\bF_q$ if and only if $F\cdot G^p$ does for any homogeneous polynomial $G\in\bF_q[x_0,\dots,x_n]$. In particular, a hypersurface $X = \{F=0\}\subset\bP^n$ is Frobenius nonclassical when $F$ itself is a $p$-th power. For most of the paper, we consider only hypersurfaces whose defining polynomials are free of $p$-th power factors.

In many proofs within the paper, we frequently use Euler's formula for homogeneous polynomials, which states that 
$$
    \sum_{i=0}^{n} x_i \frac{\partial F}{\partial x_i}
    = \deg(F) \cdot F
$$
for every homogeneous polynomial $F$ in variables $x_0, \dots, x_n$. Note that the definition of $F_{1,0}$ is independent of coordinate transformations over $\bF_q$, so the property of being Frobenius nonclassical is preserved under such a transformation. Similarly, the property of being Frobenius nonclassical is preserved under taking $\bF_q$-hyperplane sections, which is useful when carrying out a proof by induction on the dimension.

We conclude the introduction by noting that every $\bF_q$-irreducible component of a Frobenius nonclassical hypersurface, \emph{which is free of $p$-th power factors}, is still Frobenius nonclassical.

\begin{prop}
Suppose that $X=\{F=0\}\subset \mathbb{P}^n$ is a Frobenius nonclassical hypersurface over $\bF_q$ with $F = \prod_{j=1}^m F_j^{r_j}$, where $F_1,\ldots,F_m\in\bF_q[x_0, \ldots, x_n]$ are mutually distinct irreducible factors. Then, for each $1\leq j\leq m$, the component $X_j =\{F_j=0\}$ is Frobenius nonclassical provided that $p\nmid r_j$.
\end{prop}

\begin{proof}
First, we have that
\begin{align*}
    F_{1,0}
    = \sum_{i=0}^n x_i^q \frac{\partial F}{\partial x_i}
    &= \sum_{i=0}^n x_i^q\left(
        \sum_{j=1}^m F_1^{r_1}\cdots
        \frac{\partial F_j^{r_j}}{\partial x_i}
        \cdots F_m^{r_m}
    \right) \\
    &= \sum_{j=1}^m \left(
        r_j(F_j)_{1,0}F_j^{r_j-1}
        \prod_{k\neq j} F_k^{r_k}
    \right)
    = \left(
        \prod_{k=1}^{m} F_k^{r_k-1}
    \right)\sum_{j=1}^m \left(
        r_j(F_j)_{1,0}
        \prod_{k\neq j} F_k
    \right).
\end{align*}
By hypothesis, $F = \prod_{j=1}^m F_j^{r_j}$, and thus the factor $F_\ell^{r_\ell}$ for each $1\leq\ell\leq m$, divides the last expression above. This implies that
$$
    F_\ell\quad\text{divides}\quad
    \sum_{j=1}^m \left(
        r_j(F_j)_{1,0} \prod_{k\neq j} F_k
    \right).
$$
Note that $F_\ell$ is a factor of the $j$-th summand on the right-hand side for all $j\neq\ell$. Hence
$$
    F_\ell\quad\text{divides}\quad
    r_\ell(F_\ell)_{1,0} \prod_{k\neq\ell} F_k.
$$
If $p\nmid r_\ell$, then $F_\ell$ divides $(F_\ell)_{1,0}$ as it is prime to $\prod_{k\neq\ell} F_k$. This proves the statement.
\end{proof}

\vspace{7pt}

\noindent\textbf{Acknowledgements.}
We thank Nathan Kaplan and Felipe Voloch for their informative comments. We are also grateful to the anonymous referee for the excellent suggestions. The first author was partially supported by an NSERC PDF award and a postdoctoral research fellowship from the University of British Columbia. The initial draft of this paper was completed while the third author was supported by the ERC Synergy Grant HyperK (ID: 854361). The author is currently supported by the NSTC Research Grant (113-2115-M-029-003-MY3).

\section{Rational points and the universal lower bound on degree}
\label{sect:rational-point_univ-lower-bound}

In this section, we prove the universal lower bound $d\geq p+1$ in Theorem~\ref{mainthm:lower-bound}. One of the key ingredients is Lemma~\ref{lemma:incidence_non-pth-power} which asserts the existence of an $\bF_q$-point on a Frobenius nonclassical hypersurface of degree $d\leq q+1$. At the end of this section, we establish an explicit lower bound for the number of $\bF_q$-points in Proposition~\ref{prop:blocking} and, in the case of surfaces, a refined lower bound in Proposition~\ref{prop:lower-bound-rat-points-surfaces}.

\subsection{Existence of rational points}
\label{subsect:exist-rational-points}

Here we prove that, under mild assumptions, every Frobenius nonclassical hypersurface contains $\bF_q$-points. We first prove a Bertini-type result for hypersurfaces that are not a $p$-th power.

\begin{lemma}
\label{lemma:Bertini_not-pth-power}
Let $X=\{F=0\}\subset \bP^n$, where $n\geq 2$, be a hypersurface of degree $d$ over $\bF_q$ of characteristic $p$ such that $d\leq q+1$ and $X$ is not a $p$-th power. Then, for every $1\leq r\leq n-1$, there exists a linear subspace $H\subset\bP^n$ over $\bF_q$ of dimension $r$ such that $H\not\subset X$ and the restriction $F|_H$ is not a $p$-th power.
\end{lemma}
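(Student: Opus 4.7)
The plan is to first handle the case $r = n-1$ (finding a good $\bF_q$-hyperplane) and then obtain the smaller values of $r$ by iteration: a good hyperplane $H_1$ of dimension $n-1$ gives a polynomial $F|_{H_1}$ on $H_1 \cong \bP^{n-1}$ satisfying the same hypotheses (degree $\leq q+1$, not a $p$-th power), to which the lemma can be applied recursively with the target dimension $r$.

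For the case $r = n-1$ I would split according to whether $p \mid d$. If $p \nmid d$, every nonzero homogeneous polynomial of degree $d$ automatically fails to be a $p$-th power, so any $\bF_q$-hyperplane $H$ not contained in $X$ works; such $H$ exists because $X$ has at most $d$ hyperplane components while $\bP^n$ has $(q^{n+1}-1)/(q-1) > d$ many $\bF_q$-hyperplanes for $n \geq 2$. If instead $p \mid d$, then $d \leq q$, because $q = p^s$ forces $p \nmid q+1$. In this subcase I would pick an $\bF_q$-point $P$ at which the gradient $(\partial F/\partial x_0, \dots, \partial F/\partial x_n)(P)$ is nonzero; such $P$ exists because some partial derivative is a nonzero homogeneous polynomial of degree $d-1 \leq q$, and the classical fact that a nonzero homogeneous polynomial in $\bF_q[x_0,\dots,x_n]$ vanishing on all of $\bP^n(\bF_q)$ must have degree at least $q+1$ forces its vanishing locus to omit some $\bF_q$-point.

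With such $P$ fixed, set $T_P := \{\sum_k (\partial F/\partial x_k)(P)\, x_k = 0\}$. The chain rule shows that whenever an $\bF_q$-hyperplane $H$ passes through $P$ with $H \neq T_P$ and $H \not\subset X$, the differential $d(F|_H)(P)$ is nonzero, so $F|_H$ is not a $p$-th power. The number of $\bF_q$-hyperplanes through $P$ equals $(q^n-1)/(q-1) = q^{n-1}+\cdots+1$, and excluding $T_P$ together with the at most $d$ hyperplane components of $X$ leaves at least $q^{n-1}+\cdots+q - d$ candidates. For $n \geq 3$ this lower bound is at least $q^2 - 1 > 0$, completing the argument in that range.

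The main obstacle will be the edge case $n = 2$ with $d = q$, where the count above degenerates to zero. I would argue by contradiction: suppose no smooth $\bF_q$-point admits a good line. Then at each smooth $P$ the $q$ lines through $P$ other than $T_P$ must all be components of $X$ (the alternative $T_P \subset X$ would force $X$ to contain $q+1$ distinct lines through $P$, violating $\deg F = d = q$). These $q$ lines account for all $\leq d = q$ hyperplane components of $X$, so every such component passes through every smooth $\bF_q$-point. Since $q \geq 2$ distinct lines meet in at most one point, the smooth $\bF_q$-locus would consist of at most one point, contradicting the Serre-type estimate $|V(\partial F/\partial x_i)(\bF_q)| \leq (d-1)(q+1) = q^2 - 1$, which guarantees at least $q + 2$ smooth $\bF_q$-points.
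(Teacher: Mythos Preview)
Your argument is correct, but it takes a quite different route from the paper's. Both proofs reduce to $r=n-1$ by iteration; from there the paper gives a single uniform argument with no case split: expand $F=\sum_m x_0^m G_m(x_1,\dots,x_n)$, pick $s\not\equiv 0\pmod p$ with $G_s\neq 0$, and note that if $G_s$ vanished on every hyperplane of the form $\{a_1x_1+\cdots+a_nx_n=0\}$ then $\deg G_s\geq(q^n-1)/(q-1)>q\geq d-s$, which is impossible. So some such hyperplane makes the $x_0^s$-coefficient of $F|_H$ nonzero, and $F|_H$ is not a $p$-th power. This avoids both the dichotomy on $p\mid d$ and the edge case $n=2,\ d=q$.

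Your approach, by contrast, is more geometric: locate an $\bF_q$-point with nonzero gradient and pick a hyperplane through it transverse to $T_P$. Two comments on the edge case. First, the bound you quote as $(d-1)(q+1)$ is not Serre's sharp bound $(d-1)q+1$, but it is weaker and hence still valid, so your count of at least $q+2$ smooth $\bF_q$-points stands. Second, the contradiction actually arrives one step earlier than you take it: once the $q$ lines through $P$ other than $T_P$ are all forced into $X$, the degree-$q$ curve $X$ equals their union, so $P$ lies on $q\geq 2$ components and $\nabla F(P)=0$, contradicting the choice of $P$ directly---there is no need to produce a second smooth point or invoke Serre again. (Equivalently: $\nabla F(P)\neq 0$ already forces at most one component of $X$ through $P$, so the honest count of bad hyperplanes through $P$ is $\leq 2$, and the ``edge case'' never arises.)
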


\begin{proof}
By induction, it suffices to prove the statement only for the case $r = n-1$, that is, for the case when $H$ is an $\bF_q$-hyperplane. Because $F$ is not a $p$-th power, there exists a system of homogeneous coordinates $\{x_0,\dots,x_n\}$ such that
$$
    F = x_0^sG_s(x_1,\dots,x_n) + \sum_{m\neq s}x_0^mG_m(x_1,\dots,x_n)
$$
where $s\not\equiv 0\pmod{p}$ and $G_s\neq 0$. The hyperplanes in $\bP^n$ of the form
$$
    H_{\underline{a}}
    \colonequals\{a_1x_1+\cdots+a_nx_n = 0\}
    \qquad\text{where}\qquad
    \underline{a} = (a_1,\dots,a_n)
    \in(\bF_q)^{n}\setminus\{0\}
$$
are parametrized by $\bP^{n-1}_{\bF_q}$. Thus there are $\frac{q^{n}-1}{q-1}$ many of them. If $G_s|_{H_{\underline{a}}} = 0$ for all $H_{\underline{a}}$, then the hypersurface $\{G_s = 0\}$ contains every $H_{\underline{a}}$ as a component, whence
$$
    \deg(G_s)\geq\frac{q^{n}-1}{q-1}.
$$
But this is impossible since
$$
    \deg(G_s) = d-s \leq q
    < \frac{q^{n}-1}{q-1}
$$
where the last inequality holds whenever $n\geq 2$. This shows that there exists $H_{\underline{a}}$ such that $G_s|_{H_{\underline{a}}}\neq 0$. Therefore, the coefficient of $x_0^s$ in $F|_{H_{\underline{a}}}$ is nontrivial, which implies that $F|_{H_{\underline{a}}}$ is not a $p$-th power.
\end{proof}

The next result explains how to find an $\bF_q$-point on a Frobenius nonclassical hypersurface $X$ by looking at its intersection with a suitable $\bF_q$-line. If $X$ contains all the $\bF_q$-lines of $\mathbb{P}^n$, then clearly $X$ contains all $\bF_q$-points in $\mathbb{P}^n$. Thus, it is natural to assume that there exists at least one $\bF_q$-line not contained in $X$.

\begin{lemma}
\label{lemma:incidence_non-pth-power}
Let $X=\{F=0\}\subset\bP^n$ be a Frobenius nonclassical hypersurface of degree $d$ over $\bF_q$, and let $p=\kchar(\bF_q)$. Assume that there exists an $\bF_q$-line $L\not\subset X$ such that the intersection $X\cap L$ is not a $p$-th power. Then $X\cap L$ contains at least one $\bF_q$-point and the intersection multiplicity at every non-$\bF_q$-point is divisible by $p$.
\end{lemma}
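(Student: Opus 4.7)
The plan is to parametrize $L$ using two $\bF_q$-points $a,b$, writing $L = \{[ua+vb] : (u:v)\in\bP^1\}$, and to reduce the problem to a divisibility statement about the binary form $g(u,v)\colonequals F(ua+vb)$. Since $L\not\subset X$, $g$ is a nonzero homogeneous polynomial of degree $d$ in $\bF_q[u,v]$, and the divisor $X\cap L$ is encoded by the factorization of $g$ into linear forms over $\overline{\bF_q}$; in particular, the hypothesis that $X\cap L$ is not a $p$-th power translates into $g$ not being a $p$-th power.

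The crucial identity is obtained by restricting $F_{1,0}$ to $L$. Because $a_i, b_i\in\bF_q$, one has $(ua_i+vb_i)^q = u^qa_i+v^qb_i$, and combining this with the chain rule yields
$$F_{1,0}(ua+vb) = u^q\, g_u + v^q\, g_v,$$
which I denote by $h(u,v)$. The Frobenius nonclassicality $F\mid F_{1,0}$ then forces the divisibility $g\mid h$ in $\bF_q[u,v]$.

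Next I analyze the local behaviour at each root of $g$. Writing $P_j\in X\cap L$ (not equal to $(1:0)$) as $(\alpha_j:1)$ with multiplicity $m_j$, factor $g = \ell^{m_j}\,\tilde g$ with $\ell\colonequals u-\alpha_j v$ and $\tilde g(\alpha_j,1)\neq 0$. A direct expansion yields
$$h = m_j\,\ell^{m_j-1}(u^q-\alpha_j v^q)\,\tilde g + \ell^{m_j}(u^q\,\tilde g_u + v^q\,\tilde g_v),$$
and $u^q-\alpha_j v^q$ specializes to $(\alpha_j^q-\alpha_j)v^q$ at $\ell=0$. If $P_j$ is not $\bF_q$-rational, then $\alpha_j^q\neq\alpha_j$, so the first summand is nonzero at $\ell=0$ whenever $p\nmid m_j$; but $\ell^{m_j}\mid g\mid h$ requires the order of $h$ at $\ell=0$ to be at least $m_j$. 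The resulting contradiction gives $p\mid m_j$ for every non-$\bF_q$-rational point of $X\cap L$.

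For the existence of an $\bF_q$-rational point, group the roots of $g$ into $\Gal(\overline{\bF_q}/\bF_q)$-orbits. If $X\cap L$ had no $\bF_q$-rational point, every root would be non-$\bF_q$-rational with multiplicity divisible by $p$ by the previous step; assembling the minimal polynomial of each orbit (which lies in $\bF_q[u,v]$) and using that $\bF_q$ is perfect to write the leading constant as a $p$-th power, $g$ itself would be a $p$-th power in $\bF_q[u,v]$, contradicting the hypothesis. The main technical point is the local expansion of $h$ above together with the observation that its leading term at $\ell=0$ vanishes precisely when $\alpha_j\in\bF_q$ or $p\mid m_j$; once this is in hand, the two conclusions follow cleanly.
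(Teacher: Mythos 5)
Your proof is correct and follows essentially the same route as the paper: both restrict the Frobenius nonclassicality relation to the line, reduce to a divisibility statement about the binary form $g$, and perform a local order count at each non-$\bF_q$-root to extract $p\mid m_j$. The only cosmetic differences are that the paper first invokes Euler's formula to replace $x^qf_x+y^qf_y$ by $(x^q-x)f_x+(y^q-y)f_y$ before dehomogenizing, while you keep the computation homogeneous and read off $\bF_q$-rationality from the factor $u^q-\alpha_jv^q$ directly; and you spell out the Galois-orbit argument for the final ``all multiplicities divisible by $p$ implies $p$-th power'' step, which the paper leaves implicit.
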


\begin{proof}
Assume without loss of generality that the line $L$ is given by 
\begin{align*}
    L = \{
        [x : y : 0 : 0 : \cdots : 0]
        \mid [x:y]\in \bP^1
    \}.
\end{align*}
Then $X\cap L$ is defined by the binary form $f(x,y)=F(x,y,0,...,0)$. Note that $f(x,y)\not\equiv 0$ because $L\not\subset X$. Because $X$ is Frobenius nonclassical,
$$
    F \mid x_0^q F_0 + ... + x_n^q F_n
    \qquad\text{where}\qquad
    F_i\colonequals\frac{\partial F}{\partial x_i},
$$
which implies that $f(x,y)$ divides $x^q f_x(x,y) + y^q f_y(x,y)$. By Euler's formula, $f(x,y)$ also divides $x f_x(x,y) + y f_y(x,y)$. As a result,
\begin{equation}
\label{eqn:f_div_fx-fy}
    f(x,y) \mid (x^q-x)f_x(x,y) + (y^q-y)f_y(x,y).
\end{equation}
Since $f(x,y)$ is not a $p$-th power by hypothesis, $f_x(x,y)$ and $f_y(x,y)$ cannot be identically zero simultaneously. Assume without loss of generality that $f_x(x,y)\neq 0$. Note that this implies $f_x(x,1)\not\equiv 0$. Substituting $y=1$ into \eqref{eqn:f_div_fx-fy}, we obtain
\begin{equation}
\label{eqn:f_div_f_x(x,1)}
    f(x,1) \mid (x^q-x)f_x(x,1).
\end{equation}

Suppose that $\alpha\in\overline{\bF_q}\setminus\bF_q$ is a non-$\bF_q$-root of $f(x,1)$, so that $f(x,1)=(x-\alpha)^m g(x)$ for some $m\geq 1$ and $g(\alpha)\neq 0$. Then
$$
    f_x(x,1)
    = m(x-\alpha)^{m-1} g(x)+(x-\alpha)^m g'(x)
    = (x-\alpha)^{m-1} h(x)
$$
where $h(x) = mg(x)+(x-\alpha) g'(x)$. Relation~\eqref{eqn:f_div_f_x(x,1)} now takes the form
$$
    (x-\alpha)^{m} g(x) \mid (x^q-x) (x-\alpha)^{m-1} h(x)
$$
which implies that $x-\alpha$ divides $(x^q-x) h(x)$.  The polynomial $(x-\alpha)$ does not divide $(x^q-x)$ since $\alpha\notin\bF_q$. Hence
$$
    (x-\alpha)\mid h(x) = mg(x)+(x-\alpha) g'(x).
$$
The displayed equation implies that $(x-\alpha)$ divides $mg(x)$. Since $g(\alpha)\neq 0$, we conclude that $m\equiv 0\pmod{p}$. Thus, every non-$\bF_q$-point in the intersection $X\cap L$ appears with multiplicity divisible by $p$.

Let us prove that there exists an $\bF_q$-point in the intersection $X\cap L$. If this is not true, then every $P\in X\cap L$ is not defined over $\bF_q$, whence appears with multiplicity divisible by $p$ due to the above result. But this implies that $f=F|_L$ is a $p$-th power, which contradicts the assumption that $X\cap L$ is not a $p$-th power. This completes the proof.
\end{proof}

\begin{cor}
\label{cor:meeting-with-line}
Let $X\subset\bP^n$ be a Frobenius nonclassical hypersurface over $\bF_q$ such that its degree $d\not\equiv 0 \pmod{p}$ where $p = \operatorname{char}(\bF_q)$. Then $X$ meets every $\bF_q$-line $L\subset\bP^n$ in at least one $\bF_q$-point.
\end{cor}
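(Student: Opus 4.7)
The plan is to reduce Corollary~\ref{cor:meeting-with-line} directly to Lemma~\ref{lemma:incidence_non-pth-power}, handling separately the trivial case when $L$ lies on $X$.

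Let $L\subset\bP^n$ be an arbitrary $\bF_q$-line. If $L\subset X$, then every $\bF_q$-point of $L$ (and there are $q+1$ of them) belongs to $X\cap L$, so there is nothing to prove. Assume therefore that $L\not\subset X$. In order to apply Lemma~\ref{lemma:incidence_non-pth-power}, I need to check that $X\cap L$ is not a $p$-th power, and then the conclusion of that lemma already gives an $\bF_q$-point in $X\cap L$.

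To verify the non-$p$-th-power condition, I would parametrize $L$ by a pair of homogeneous coordinates, so that $F|_L$ becomes a binary form. Because $F$ is homogeneous of degree $d$, the restriction $F|_L$ is a homogeneous polynomial of degree exactly $d$, and it is nonzero precisely because $L\not\subset X$. A nonzero binary form of degree $d$ can be a $p$-th power only if $p\mid d$; the hypothesis $d\not\equiv 0\pmod p$ therefore rules this out, and Lemma~\ref{lemma:incidence_non-pth-power} applies.

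There is essentially no obstacle here: the argument is a short consequence of Lemma~\ref{lemma:incidence_non-pth-power}. The only point worth stating carefully is that the degree of $F|_L$ is preserved (and in particular not killed) once $L\not\subset X$, which is immediate from homogeneity.
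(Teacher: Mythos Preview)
Your proof is correct and follows exactly the paper's approach: split into the cases $L\subset X$ and $L\not\subset X$, and in the latter observe that $d\not\equiv 0\pmod p$ forces $F|_L$ not to be a $p$-th power, so Lemma~\ref{lemma:incidence_non-pth-power} applies. The paper's proof is the same, just stated more tersely.
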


\begin{proof}
If $L\subset X$, then the proof is done. Assume $L\not\subset X$. The condition $d\not\equiv 0 \pmod{p}$ implies that $X\cap L$ is not a $p$-th power. The desired result follows from Lemma~\ref{lemma:incidence_non-pth-power}.
\end{proof}

\begin{cor}
\label{cor:rational-point}
Let $X = \{F=0\}\subset\bP^n$ be a Frobenius nonclassical hypersurface of degree~$d$ over $\bF_q$ and let $p = \operatorname{char}(\bF_q)$. If $d\leq q+1$ and $F$ is not a $p$-th power, then $X$ contains at least one $\bF_q$-point.
\end{cor}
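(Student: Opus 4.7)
The plan is to combine the two preceding lemmas directly, with essentially no additional work. Since $d\leq q+1$ and $F$ is not a $p$-th power, the hypotheses of Lemma~\ref{lemma:Bertini_not-pth-power} are satisfied, so applying it with $r=1$ produces an $\bF_q$-line $L\subset\bP^n$ such that $L\not\subset X$ and the restriction $F|_L$ is not a $p$-th power. In particular, the scheme-theoretic intersection $X\cap L$, which is cut out by $F|_L$, is not a $p$-th power either.

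With such a line in hand, the hypotheses of Lemma~\ref{lemma:incidence_non-pth-power} are met verbatim. Applying it yields at least one $\bF_q$-rational point in $X\cap L$, which is in particular an $\bF_q$-point of $X$, completing the proof.

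There is essentially no obstacle here: the Bertini step reduces the problem to a one-dimensional question, and the incidence step solves it. The only thing worth flagging is that $n\geq 2$ is needed implicitly for Lemma~\ref{lemma:Bertini_not-pth-power} to be applicable; the degenerate case $n=1$, if one wishes to address it, is immediate since a Frobenius nonclassical zero-dimensional ``hypersurface'' in $\bP^1$ has each geometric point fixed by the Frobenius, hence $\bF_q$-rational.
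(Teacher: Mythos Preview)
Your proof is correct and follows essentially the same route as the paper: apply Lemma~\ref{lemma:Bertini_not-pth-power} to produce an $\bF_q$-line $L\not\subset X$ with $F|_L$ not a $p$-th power, then invoke Lemma~\ref{lemma:incidence_non-pth-power}. The paper prefaces this with the case ``if $X$ contains every $\bF_q$-line then the conclusion is obvious,'' but under the hypotheses $d\leq q+1$ and $F$ not a $p$-th power this case is already excluded by Lemma~\ref{lemma:Bertini_not-pth-power} itself, so your streamlined version is in fact cleaner.
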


\begin{proof}
Lemma~\ref{lemma:Bertini_not-pth-power} asserts the existence of an $\bF_q$-line $L\not\subset X$ such that $X\cap L$ is not a $p$-th power. The desired conclusion follows from Lemma~\ref{lemma:incidence_non-pth-power}.
\end{proof}

Corollary~\ref{cor:rational-point} fails in general when $d\geq q+2$. Indeed, the curves in Theorem~\ref{mainthm:upper-bound}~\ref{main:d=q+2} satisfy the hypothesis of the corollary except that $d = q+2$. Those curves contain no $\bF_q$-point by Corollary~\ref{cor:d=q+2_no-Fq-pt}.

\begin{rmk}
\label{rmk:pointless_example}
In Corollary~\ref{cor:rational-point}, the hypothesis that $F$ is not a $p$-th power is necessary because, if this assumption is dropped, then the $p$-th power of any hypersurface over $\bF_q$ that contains no $\bF_q$-point would provide a counterexample. One way to construct a concrete example of a pointless hypersurface $Y = \{G = 0\}\subset\bP^n$ is via the \emph{norm polynomials} \cite{LN96}*{Example~6.7} as follows. Let $\{\alpha_0, \alpha_1, ..., \alpha_{n}\}$ be a basis for $\bF_{q^{n+1}}$ over $\bF_q$. Consider the homogeneous polynomial over $\bF_q$:
$$
    G = \prod_{i=0}^{n}(
        \alpha_0^{q^i} x_0 + \cdots + \alpha_n^{q^i} x_n
    ).
$$
For every $\mathbf{b}=(b_0, b_1, ..., b_n)\in \bF_{q}^{n+1}$, it is easy to check that $G(\mathbf{b})=N_{\bF_{q^{n+1}}/\bF_{q}}(\mathbf{b})$ is the usual norm map. Hence $G(\mathbf{b})=0$ implies $\mathbf{b}=\mathbf{0}$. Thus, $Y=\{G=0\}\subset\bP^n$ contains no $\bF_q$-points. Another way to construct a pointless hypersurface when $n+1 < p$ is to take
$$
    G = \sum_{i=0}^{n}x_i^{q-1}.
$$
Because $a^{q-1}=1$ for all $a\in\bF_q^{\ast}$, it is clear that $Y=\{G=0\}\subset\bP^n$ has no $\bF_q$-points.
\end{rmk}

\subsection{Proof of the lower bound \texorpdfstring{$\mathbf{d\geq p+1}$}{d>=p+1}}
\label{subsect:d>=p+1}

Let us start by establishing two fundamental lemmas. The first one provides a criterion for the geometric integrality of a Frobenius nonclassical hypersurface.

\begin{lemma}
\label{lemma:sm-Fq-pt_geo-irr}
Let $X=\{F=0\}\subset\bP^n$ be a Frobenius nonclassical hypersurface over $\bF_q$ which is irreducible over $\bF_q$ and contains a smooth $\bF_q$-point $P\in X$. Then $X$ is geometrically integral.
\end{lemma}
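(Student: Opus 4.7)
The plan is to argue by contradiction, and I note at the outset that the claim actually holds with no use of Frobenius nonclassicality; it is a general fact that an $\mathbb{F}_q$-irreducible hypersurface containing a smooth $\mathbb{F}_q$-rational point is automatically geometrically irreducible. So I would attempt the stronger statement and suspect the Frobenius-nonclassical hypothesis is only there because that is how the lemma will be used.

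Suppose that $X_{\overline{\mathbb{F}_q}} = X_1 \cup \cdots \cup X_k$ is the decomposition into geometric irreducible components, and assume for contradiction that $k \geq 2$. Since $X$ is $\mathbb{F}_q$-irreducible, the absolute Galois group $\mathrm{Gal}(\overline{\mathbb{F}_q}/\mathbb{F}_q)$ acts transitively on the set $\{X_1,\dots,X_k\}$: no proper nonempty subset is Galois-stable, because it would descend to an $\mathbb{F}_q$-closed proper subset of $X$. Now I exploit the smooth $\mathbb{F}_q$-rational point $P$ in two opposing ways. First, $P$ is Galois-fixed: if $P \in X_i$, then $P = \sigma(P) \in \sigma(X_i)$ for every $\sigma$, and by transitivity the orbit of $X_i$ is all of $\{X_1,\dots,X_k\}$, so $P$ lies on every component. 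Second, smoothness of $X$ at $P$ rules this out: writing $F = c\, F_1^{m_1}\cdots F_k^{m_k}$ over $\overline{\mathbb{F}_q}$ with distinct irreducible factors and computing $\partial F/\partial x_j$ via the product rule, every term acquires a factor that vanishes at $P$ as soon as $P$ lies on two of the $\{F_l=0\}$ or as soon as some $m_l\geq 2$ with $F_l(P)=0$; hence $dF|_P=0$, contradicting smoothness. So $P$ must lie on a unique component and with multiplicity one. These two conclusions are incompatible as long as $k\geq 2$, yielding the contradiction.

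The main obstacle I anticipate is keeping the derivative computation honest in the non-reduced case: one has to check that a repeated factor $F_l^{m_l}$ with $m_l\geq 2$ and $F_l(P)=0$ still forces $dF|_P=0$ in every characteristic (including when $p\mid m_l$), which it does because each summand of $\partial F/\partial x_j$ retains a copy of $F_l$. Once this local analysis is in place, the Galois-theoretic step is straightforward, and the lemma follows.
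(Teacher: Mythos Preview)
Your argument is correct and is essentially the same as the paper's: both hinge on the fact that a smooth point lies on a unique geometric irreducible component, together with the Galois/Frobenius invariance of an $\bF_q$-point, to conclude that this component is already defined over $\bF_q$. The paper phrases it directly rather than by contradiction (showing $\Phi(X')=X'$ for the component $X'$ through $P$, hence $X'=X$), and you are right that the Frobenius nonclassical hypothesis is not used in either version.
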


\begin{proof}
Since $P$ is smooth, it is contained in a unique geometrically integral component $X'\subset X$. Under the $q$-th Frobenius endomorphism $\Phi$, we have
$
    P = \Phi(P)\in X'\cap\Phi(X').
$
Both $X'$ and $\Phi(X')$ are geometrically integral components of $X$ containing $P$, which ensures that $X' = \Phi(X')$. Thus $X'$ is defined over $\bF_q$. It follows that $X = X'$ as $X$ is irreducible over $\bF_q$.
\end{proof}

The next lemma gives a lower bound on the degree of a space-filling hypersurface.

\begin{lemma}
\label{lemma:space-filling_d-geq-q+1}
Let $X\subset\bP^n$ be a hypersurface over $\bF_q$. Suppose that $X$ is space-filling, namely, it satisfies $X(\bF_q)=\bP^n(\bF_q)$. Then $\deg(X)\geq q+1$.
\end{lemma}

\begin{proof}
We proceed by induction on $n$. When $n=1$, the conclusion follows since a space-filling subset $X\subset\bP^1$ is defined by a binary form divisible by $x^q y - xy^q$. For the inductive step, let $X\subset\bP^n$ be a hypersurface with $X(\bF_q)=\bP^n(\bF_q)$ where $n\geq 2$. If $X$ contains all the $\bF_q$-hyperplanes in $\bP^n$, then
$$
    \deg(X) \geq \#(\bP^n)^\ast(\bF_q) = \sum_{i=0}^nq^i > q+1.
$$
Otherwise, there exists an $\bF_q$-hyperplane $H\subset\bP^n$ such that $\dim(X\cap H) = \dim(H) - 1$. Now, $Y\colonequals X\cap H$ can be viewed as a hypersurface in $H\cong\bP^{n-1}$ which is space-filling. Applying the induction hypothesis to $Y$, we obtain $\deg(X) = \deg(Y) \geq q+1$.
\end{proof}

We are now ready to establish the lower bound $d\geq p+1$ in Theorem~\ref{mainthm:lower-bound}. The proof will proceed by induction on the dimension of the hypersurface. The following result settles the initial case.

\begin{lemma}
\label{lemma:lower-bound_p+1_curve}
Let $C\subset\bP^2$ be a Frobenius nonclassical curve of degree $d\geq 2$ over $\bF_q$ of characteristic $p$ which is smooth at all its $\bF_q$-points. Then $d\geq p+1$.
\end{lemma}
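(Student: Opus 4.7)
The plan is to argue by contradiction: suppose $d \leq p$, and produce a smooth $\bF_q$-point whose tangent line forces a congruence on the intersection multiplicity that cannot be satisfied.

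I would first rule out $F$ being a $p$-th power: if $F = G^p$, then $d \leq p$ together with $\deg G = d/p$ forces $d = p$ and $G$ linear, so $C$ is a fat line, singular along every $\bF_q$-point of the underlying line, contradicting the hypothesis that $C$ is smooth at $\bF_q$-points. Since $d \leq p \leq q + 1$, Corollary~\ref{cor:rational-point} then produces an $\bF_q$-point $P \in C$, which is smooth by hypothesis. Let $L = T_P C$, an $\bF_q$-rational line.

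If $L \subset C$, I would write $C = L \cup C'$ with $C'$ Frobenius nonclassical of degree $d - 1$; smoothness of $C$ at $\bF_q$-points forces the scheme $L \cap C'$, of degree $d - 1$, to contain no $\bF_q$-point. Applying Lemma~\ref{lemma:incidence_non-pth-power} to $(C', L)$, either $L \cap C'$ is a $p$-th power (so $p \mid d-1$), or it contains an $\bF_q$-point; both alternatives are incompatible with $2 \leq d \leq p$.

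The main case is $L \not\subset C$. Writing $F_{1,0} = F H$ with $\deg H = q - 1$, I would carry out two complementary computations. Globally, from $\partial_j F_{1,0} = \sum_i x_i^q F_{ij}$ (using $p \mid q$) combined with Euler's identity applied to each $F_j$, one obtains $\partial_j F_{1,0}(P) = (d-1) F_j(P)$; comparing with $\partial_j(FH)(P) = F_j(P) H(P)$ and picking a $j$ with $F_j(P) \neq 0$ yields $H(P) = d - 1$ in $\bF_q$. Locally on $L$, choose coordinates so $P = [0:0:1]$ and $L = \{x_1 = 0\}$, and parametrize $L$ by $[s:0:t]$; writing $f := F|_L = s^{m_P} g(s, t)$ with $g(0, 1) \neq 0$ and $m_P = (C \cdot L)_P \geq 2$, and substituting into $F_{1,0}|_L = s^q \partial_s f + t^q \partial_t f$ while invoking Euler's identity for $f$, one derives the key relation
\[
    (m_P\, g + s\, \partial_s g)(s^{q-1} - t^{q-1}) = g\,(h - d\, t^{q-1}),
\]
where $h = H|_L$. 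Evaluating at $(s, t) = (0, 1)$ and dividing by $g(0, 1)$ gives $h(0, 1) = d - m_P$; comparing with $h(0, 1) = H(P) = d - 1$ produces the congruence $m_P \equiv 1 \pmod{p}$. Together with $m_P \geq 2$ this forces $m_P \geq p + 1$, but B\'ezout gives $m_P \leq d \leq p$, a contradiction. The hard part is precisely this matching of the two avatars of $F \mid F_{1,0}$ at $P$: the ``global'' differentiated identity $H(P) = d - 1$ must be reconciled modulo $p$ with the ``tangential'' identity $h(0, 1) = d - m_P$ to upgrade the trivial tangency bound $m_P \geq 2$ into the decisive $m_P \geq p + 1$.
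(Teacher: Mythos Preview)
Your proof is correct and takes a genuinely different route from the paper's. The paper reduces to an $\bF_q$-irreducible component $C'$, shows it is geometrically irreducible and reduced, invokes the nonreflexivity result of Hefez--Voloch \cite{HV90}*{Proposition~1}, and then works at a smooth \emph{non}-$\bF_q$-point $P$: nonreflexivity gives $(C'\cdot T_PC')_P\equiv 0\pmod p$, and Frobenius nonclassicality contributes the extra point $\Phi(P)\in T_PC'$, yielding $\deg(C')\geq p+1$. You instead stay with $C$ itself, locate a smooth $\bF_q$-point $P$ via Corollary~\ref{cor:rational-point}, and prove directly that the tangent multiplicity satisfies $m_P\equiv 1\pmod p$ by matching two evaluations of $H=F_{1,0}/F$ at $P$: the differentiated identity $\partial_jF_{1,0}(P)=(d-1)F_j(P)$ (using $P_i^q=P_i$ and Euler on $F_j$) against the tangential restriction $h(0,1)=d-m_P$. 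Your approach is more self-contained in that it avoids the external nonreflexivity citation and extracts the needed congruence by hand; the paper's approach is shorter once nonreflexivity is available and makes the geometric mechanism (order of contact at a generic point) more transparent. A small point worth making explicit in your write-up: the assumption $d\leq p$ together with $F$ not a $p$-th power guarantees $F_{1,0}\neq 0$ via Corollary~\ref{cor:F10=0}~\ref{F10=0:not-pth-power}, so that $H$ really is a polynomial of degree $q-1$; and in the $L\subset C$ case you should note that $L\not\subset C'$ (else $C$ would be singular along $L$) before invoking Lemma~\ref{lemma:incidence_non-pth-power}.
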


\begin{proof}
Because $C$ is smooth at $\bF_q$-points, it cannot contain more than one $\bF_q$-linear component, nor a nonreduced $\bF_q$-linear component. Since $\deg(C)\geq 2$, one can find $C'\subset C$ to be a curve irreducible over $\bF_q$ with degree $\geq 2$. Note that $C'$ is Frobenius nonclassical over $\bF_q$. There is nothing to prove if $\deg(C')\geq q+1$, so we can assume $\deg(C')\leq q$. If $C'$ is the $p$-th power of an $\bF_q$-line, then it is singular at $\bF_q$-points, a contradiction. Hence if $C'$ is a $p$-th power, then $\deg(C') = mp$ for some $m\geq 2$, thus $\deg(C')\geq p+1$ as desired. Assume that $C'$ is not a $p$-th power. Then Corollary~\ref{cor:rational-point} implies that $C'$ contains an $\bF_q$-point, which is smooth by hypothesis. It follows from Lemma~\ref{lemma:sm-Fq-pt_geo-irr} that $C'$ is geometrically integral.

The curve $C'$ is geometrically integral and thus is reduced. Therefore, $C'$ is nonreflexive by \cite{HV90}*{Proposition~1}. According to  \cite{HV90}*{Proposition 4} and \cite{SV86}*{Theorem 1.5}, there exists a non-$\mathbb{F}_q$-point $P\in C'$ such that the intersection multiplicity of $T_P C'$ with $C'$ at $P$ is a power of $p$. Since $C'$ is Frobenius nonclassical, $T_PC'$ also contains $\Phi(P)\neq P$ where $\Phi$ is the $q$-th Frobenius endomorphism. Therefore, $T_PC'$ intersects $C'$ in at least $p+1$ points counted with multiplicity, which yields $\deg(C)\geq\deg(C')\geq p+1$.
\end{proof}

\begin{thm}
\label{thm:lower-bound_p+1}
Let $X\subset\bP^n$, where $n\geq 2$, be a Frobenius nonclassical hypersurface of degree $d\geq 2$ over $\bF_q$ of characteristic $p$ which is smooth at all its $\bF_q$-points. Then $d\geq p+1$.
\end{thm}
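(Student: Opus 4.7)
The plan is to prove the theorem by induction on $n$, with Lemma~\ref{lemma:lower-bound_p+1_curve} providing the base case $n=2$. For the inductive step with $n\geq 3$, the strategy is to produce an $\bF_q$-hyperplane section that satisfies the hypotheses in dimension $n-1$, after disposing of several degenerate situations where the bound can be read off directly.

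First I would handle the degenerate cases. If $F$ is a $p$-th power, the perfectness of $\bF_q$ allows us to write $F=G^p$; then $\{G=0\}$ lies in the singular locus of $X$, so smoothness at $\bF_q$-points forces $\{G=0\}(\bF_q)=\emptyset$. Since every $\bF_q$-hyperplane contains $\bF_q$-points, this rules out $\deg G = 1$, giving $d = p\deg G \geq 2p \geq p+1$. If $F$ is not a $p$-th power but $X(\bF_q)=\emptyset$, the contrapositive of Corollary~\ref{cor:rational-point} yields $d\geq q+2$. If instead $\#X(\bF_q)\geq \#\bP^n(\bF_q)-1$, then Lemma~\ref{lemma:almost_space-filling} applied to any smooth $\bF_q$-point gives $d\geq q+1$. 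Each of these situations delivers $d\geq p+1$, so I may henceforth assume that $F$ is not a $p$-th power, that $X(\bF_q)\neq\emptyset$, and that $\#X(\bF_q)\leq \#\bP^n(\bF_q)-2$.

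Under these assumptions, the crux of the argument, and the step I expect to be the main obstacle, is to exhibit an $\bF_q$-hyperplane $H$ such that $H\not\subset X$ and $H\neq T_PX$ for every $P\in X(\bF_q)$. Given such an $H$, the intersection $Y = X\cap H$ is a Frobenius nonclassical hypersurface in $H\cong\bP^{n-1}$ of degree $d\geq 2$, smooth at its $\bF_q$-points by transversality, and the inductive hypothesis immediately yields $d\geq p+1$. The $\bF_q$-hyperplanes violating either requirement number at most $|\mathcal{H}_X| + \#X(\bF_q)$, where $\mathcal{H}_X$ denotes the set of $\bF_q$-hyperplane components of $X$. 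The key observation is that $|\mathcal{H}_X|\leq 1$: two distinct hyperplane components would intersect in a $\bP^{n-2}_{\bF_q}$, which is nonempty for $n\geq 3$ and along which $X$ would have two local branches meeting, contradicting smoothness at $\bF_q$-points. Combining with $\#X(\bF_q)\leq \#\bP^n(\bF_q)-2$ bounds the bad set by $\#\bP^n(\bF_q)-1$, strictly less than the $\#\bP^n(\bF_q)$ total $\bF_q$-hyperplanes in $\bP^n$, so a valid $H$ exists.
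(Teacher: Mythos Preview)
Your proof is correct and follows essentially the same approach as the paper: induction on $n$ with Lemma~\ref{lemma:lower-bound_p+1_curve} as base case, the observation that $X$ has at most one $\bF_q$-hyperplane component, and the dichotomy between finding a hyperplane section smooth at $\bF_q$-points (apply induction) versus having $\#X(\bF_q)\geq\#\bP^n(\bF_q)-1$ (apply Lemma~\ref{lemma:almost_space-filling}). You phrase the dichotomy as a direct counting argument and add some preliminary casework (the $p$-th power case and the $X(\bF_q)=\emptyset$ case), which is harmless but actually redundant, since both are absorbed by your counting estimate $|\mathcal{H}_X|+\#X(\bF_q)\leq\#\bP^n(\bF_q)-1$.
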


\begin{proof}
Let us prove the statement by induction on $n$. The base case is done in Lemma~\ref{lemma:lower-bound_p+1_curve}. For the inductive step, note that since $X$ is smooth at $\bF_q$-points, it cannot contain more than one $\bF_q$-hyperplane, nor a nonreduced $\bF_q$-hyperplane. This fact, together with the hypothesis $d\geq 2$, guarantees the existence of an $\bF_q$-irreducible component $X'\subset X$ with degree $\geq 2$. By replacing $X$ with $X'$, we can further assume $X$ to be $\bF_q$-irreducible.

Take $H\subset \bP^n$ to be a hyperplane defined over $\bF_q$. Then $Y\colonequals X\cap H$ is a Frobenius nonclassical hypersurface in $H\cong\bP^{n-1}$. If $Y$ is smooth at all $\bF_q$-points, then the inductive hypothesis implies that $\deg(X) = \deg(Y)\geq p+1$.

Thus, we may assume that for every $H$ as above, the section $Y = X\cap H$ is singular at some $\mathbb{F}_q$-point $Q$, which is equivalent to asserting that $H = T_QY$. In other words, each $\bF_q$-hyperplane in $\mathbb{P}^n$ is tangent to $X$ at some $\mathbb{F}_q$-point. Notice that the Gauss map induced by $X$ is well-defined at the set of $\bF_q$-points as these points are smooth. It follows that the Gauss map is surjective at the level of $\mathbb{F}_q$-points:
$$\xymatrix{
    X(\bF_q) \ar@{->>}[r]
    & (\bP^n)^\ast(\bF_q) : P\ar@{|->}[r]
    & T_PX.
}$$
Hence $X$ is space-filling. By Lemma~\ref{lemma:space-filling_d-geq-q+1}, we get $\deg(X)\geq q+1\geq p+1$.
\end{proof}

\subsection{Blocking sets and abundance of rational points}
\label{subsect:many-rational-points}

In this part, we provide evidence for the abundance of $\mathbb{F}_q$-points on Frobenius nonclassical hypersurfaces. This phenomenon was already observed for the case of smooth plane curves by Hefez--Voloch~\cite{HV90} and later extended to singular curves by Borges--Homma~\cite{BH17}. We will start by showing that the configuration of $\bF_q$-points on a Frobenius nonclassical hypersurface possesses an interesting combinatorial structure. 

To state our first result in this direction, we introduce a relevant definition from finite geometry. 

\begin{defn}
Let $\mathcal{S}$ be a set of $\bF_q$-points in $\bP^n$. We say that $\mathcal{S}$ is a \emph{blocking set with respect to lines} if $\mathcal{S}\cap L$ is non-empty for each $\bF_q$-line $L\subset\bP^n$. Such a blocking set $\mathcal{S}$ is called \emph{trivial} if it contains all the $\bF_q$-points on some $\bF_q$-hyperplane. Otherwise, $\mathcal{S}$ is called \emph{non-trivial}.
\end{defn}

More generally, one can define the notion of a $k$-blocking set which is a set of $\bF_q$-points in $\bP^n$ which meets every $(n-k)$-dimensional space defined over $\bF_q$. The definition above can then be viewed as the special case when $k=n-1$. See \cite{KS20}*{Chapter 9} for a comprehensive account of finite geometry in higher dimensional spaces. For a further generalization and the sets of minimal cardinality under it, see \cite{Hub87}.  In addition, two recent papers \cites{AGY-blocking-1, AGY-blocking-2} study blocking sets arising from the $\mathbb{F}_q$-rational points of plane curves over finite fields.

\begin{prop}
\label{prop:blocking}
Let $X\subset\bP^n$ be a Frobenius nonclassical hypersurface over $\bF_q$ of degree $d\leq q$ and set $p = \operatorname{char}(\bF_q)$. If $d\not\equiv 0\pmod{p}$ and $X$ contains no $\bF_q$-linear component, then $X(\bF_q)$ is a non-trivial blocking set with respect to lines. Moreover, as long as $q\geq 5$, there is a lower bound
$$
    \#X(\bF_q) \geq \frac{q^{n}-1}{q-1}+\sqrt{q}\cdot q^{n-2}.
$$
\end{prop}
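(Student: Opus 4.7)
The plan is to establish the two assertions in turn. First, since $d\not\equiv 0\pmod{p}$ and $d\leq q$, Corollary~\ref{cor:meeting-with-line} guarantees that every $\bF_q$-line in $\bP^n$ meets $X$ in at least one $\bF_q$-point. By definition, this means $X(\bF_q)$ is a blocking set with respect to lines.

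To prove non-triviality, I would argue by contradiction: suppose $X(\bF_q)\supset H(\bF_q)$ for some $\bF_q$-hyperplane $H\subset\bP^n$. If $H\subset X$, this directly contradicts the hypothesis that $X$ has no $\bF_q$-linear component. Otherwise $Y\colonequals X\cap H$ is a hypersurface of degree $d$ inside $H\cong\bP^{n-1}$ satisfying $Y(\bF_q)=H(\bF_q)=\bP^{n-1}(\bF_q)$; in other words, $Y$ is space-filling in $H$. Lemma~\ref{lemma:space-filling_d-geq-q+1} then forces $d=\deg(Y)\geq q+1$, contradicting $d\leq q$.

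With non-triviality in hand, the cardinality bound will come from a classical result in finite geometry (see, e.g., \cite{KS20}*{Chapter~9}): for any $q\neq 2$, every non-trivial blocking set with respect to lines in $\bP^n(\bF_q)$ contains at least $\frac{q^n-1}{q-1}+\sqrt{q}\cdot q^{n-2}$ points. This is the generalization to dimension $n\geq 2$ of Bruen's bound $q+\sqrt{q}+1$ for non-trivial planar blocking sets, and the assumption $q\neq 2$ in the proposition is exactly the restriction required for it. Applying this result to $\mathcal{S}=X(\bF_q)$ yields the stated inequality.

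The main obstacle I anticipate is simply locating and accurately invoking the sharp form of the non-trivial line-blocking set bound in higher dimensions; the preceding two steps are essentially immediate consequences of Corollary~\ref{cor:meeting-with-line} and Lemma~\ref{lemma:space-filling_d-geq-q+1}, so no substantial new idea is needed beyond the finite-geometry citation.
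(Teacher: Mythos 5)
Your proof is correct and follows essentially the same route as the paper: Corollary~\ref{cor:meeting-with-line} for the blocking-set property, Lemma~\ref{lemma:space-filling_d-geq-q+1} for non-triviality (the paper leaves the case $H\subset X$ implicit, which your explicit appeal to the no-linear-component hypothesis handles more carefully), and a citation to the finite-geometry literature for the numerical bound. The only difference is that the paper names the specific source for the cardinality bound, namely Heim's theorem \cite{Hei96} (see also \cite{HT15}*{Theorem~9.8}), which is exactly the higher-dimensional generalization of Bruen's planar bound that you describe.
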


\begin{proof}
By Corollary~\ref{cor:meeting-with-line}, every $\bF_q$-line $L$ meets $X$ in at least one $\bF_q$-point, so $X(\bF_q)$ is a blocking set with respect to lines. Next, let $H$ be any $\bF_q$-hyperplane. Then $Y\colonequals X\cap H$ is a hypersurface of degree $d\leq q$ inside $H\cong\bP^{n-1}$. By Lemma~\ref{lemma:space-filling_d-geq-q+1}, we know that $Y(\bF_q)\neq H(\bF_q)$, which means $X(\bF_q)$ does not contain all of $H(\bF_q)$. This shows that $X(\bF_q)$ is a non-trivial blocking set. Finally, the lower bound on the number $\bF_q$-points follows from Huber's theorem on $(t,s)$-blocking set with $t=s=n-1$ \cite{Hub87}*{Theorem~1}. For additional references, see also \cite{Hei96} and \cite{HT15}*{Theorem~9.8}.
\end{proof}

Proposition~\ref{prop:blocking} fails when $d \geq q+1$ in view of Theorem~\ref{mainthm:upper-bound}. Indeed, for the examples in \ref{main:d=q+1_n-odd} and \ref{main:d=q+1_n-even} of the theorem, the former are space-filling, while the latter contain all the $\bF_q$-points on the hyperplane $\{x_0 = 0\}$. Therefore, the sets of $\bF_q$-points in these two cases form trivial blocking sets.

Next, we prove a lower bound on the number of $\bF_q$-points on a smooth Frobenius nonclassical surface that depends on the degree of the surface. As preparation, we prove a lower bound for curves.

\begin{lemma}
\label{lem:point-count-curve-smooth-at-rat-pts}
Let $C\subset\bP^2$ be a reduced Frobenius nonclassical curve over $\bF_q$ of degree $d\leq q+1$ which is smooth at all of its $\bF_q$-points. Then
$$
    \#C(\bF_q)\geq d(q-d+2).
$$
\end{lemma}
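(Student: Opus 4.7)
My strategy is to analyze the intersection of $C$ with each $\bF_q$-line via the Frobenius-nonclassical factorization $F_{1,0} = F \cdot G$. I first note that $F_{1,0} \not\equiv 0$ under the hypotheses: by the classification in Theorem~\ref{mainthm:upper-bound}, the cases with $F_{1,0} \equiv 0$ require either $n$ odd with $d = q+1$ (case~\ref{main:d=q+1_n-odd}) or $d = q+2$ (case~\ref{main:d=q+2}); neither applies since $n = 2$ and $d \leq q+1$. Thus $G$ is well-defined with $\deg G = q - 1$.

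The key computation is a pointwise identity at $\bF_q$-intersection points. Restricting to an $\bF_q$-line $L \not\subset C$ parametrized by two $\bF_q$-points $A, B$, set $f(s,t) = F(sA+tB)$ and $g(s,t) = G(sA+tB)$. Since the parametrization uses $\bF_q$-coordinates, the restricted identity becomes
\[
f g = s^q f_s + t^q f_t.
\]
At an $\bF_q$-root $[s_0{:}t_0]$ of $f$ of multiplicity $m$, factor $f = (t_0 s - s_0 t)^m \cdot h$ with $h(s_0, t_0) \neq 0$. Using the identity $t_0 s^q - s_0 t^q = (t_0 s - s_0 t)^q$ (which only holds because $s_0, t_0 \in \bF_q$) together with Euler's relation applied to $h$, a direct substitution yields the pointwise formula $g(s_0, t_0) = d - m$. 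In particular, every transverse $\bF_q$-intersection (with $m=1$) lies in the fiber $(g|_L)^{-1}(d-1)$.

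Given this pointwise formula, I will bound the transverse $\bF_q$-intersections on each line by counting zeros of $g|_L - (d-1)$, a polynomial of degree at most $q - 1$ in $[s{:}t]$, which has at most $q - 1$ zeros in any affine chart unless it vanishes identically. Combining with the divisibility-by-$p$ of multiplicities at non-$\bF_q$-intersections (Lemma~\ref{lemma:incidence_non-pth-power}) and Bezout's identity $b_L + c_L = d$, I obtain per-line constraints on $a_L = \#(C \cap L)(\bF_q)$. A double-counting $\sum_L a_L = (q+1)\,\#C(\bF_q)$ over $\bF_q$-lines, together with separate tracking of tangent lines $T_PC$ at each $P \in C(\bF_q)$, should then deliver the lower bound $\#C(\bF_q) \geq d(q - d + 2)$, mirroring the Hefez--Voloch count in the smooth case.

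\textbf{Main obstacle.} The hardest case is when $g|_L$ is identically $d - 1$ on $L$, or more globally when $G$ shares an irreducible factor with $F$ (as happens for Hermitian curves, where $G$ is a scalar multiple of $F^{\sqrt{q}-1}$): here the pointwise formula becomes vacuous, and the counting argument from the previous paragraph breaks down. Handling this will require either invoking higher-order Frobenius polynomials $F_{a,b}$ from~\cite{ADL21} to replace $G$ with a more discriminating auxiliary polynomial, or inspecting the degenerate configurations directly and verifying the bound by exploiting their structural rigidity.
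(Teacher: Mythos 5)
Your proposal takes a fundamentally different route from the paper, and unfortunately it has a real gap that you yourself flag and do not resolve. The paper's argument is short and modular: write $C = C_1 \cup \cdots \cup C_m$ into $\bF_q$-irreducible pieces; since $C$ is reduced no $C_i$ is a $p$-th power, so by Corollary~\ref{cor:rational-point} and Lemma~\ref{lemma:sm-Fq-pt_geo-irr} each $C_i$ is geometrically irreducible; then invoke Borges--Homma \cite{BH17}*{Corollary~1.4}, which already gives $\#C_i(\bF_q)\geq d_i(q-d_i+2)$ for a geometrically irreducible Frobenius nonclassical plane curve; finally, smoothness at $\bF_q$-points makes the sets $C_i(\bF_q)$ disjoint, and the convexity inequality $\sum d_i^2 \leq (\sum d_i)^2$ collapses the sum to $d(q-d+2)$. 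You are instead trying to re-prove the Hefez--Voloch/Borges--Homma point count from first principles via the per-line analysis of $g|_L$, where $F_{1,0}=FG$. That is a much heavier undertaking, and the place where you declare a ``main obstacle'' is precisely where it fails: for Hermitian curves $G$ is proportional to $F^{\sqrt q - 1}$, so $g|_L$ degenerates wherever it matters, and these are exactly the extremal curves that attain the bound $d(q-d+2)$ with equality. An argument that cannot handle the equality case cannot prove the inequality. You gesture at two possible fixes (higher $F_{a,b}$, or direct structural inspection) but execute neither, so the proof is incomplete.

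A secondary issue: you justify $F_{1,0}\not\equiv 0$ by appealing to Theorem~\ref{mainthm:upper-bound}, which classifies \emph{smooth} hypersurfaces, whereas $C$ here is only reduced and smooth at its $\bF_q$-points. The conclusion can still be salvaged (Corollary~\ref{cor:F10=0}\ref{F10=0:not-pth-power} forces $d\geq q+1$ when $F_{1,0}=0$ and $F$ is not a $p$-th power, and the resulting $d=q+1$ form in part~\ref{F10=0:d-not-0-mod-p} has a degenerate skew-symmetric $3\times 3$ matrix, producing a singular $\bF_q$-point), but as written the citation does not apply. I would recommend either filling the degenerate-$G$ case explicitly or, more realistically, following the paper's decomposition-plus-citation route, which sidesteps the whole per-line analysis.
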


\begin{proof}
Let us write $C=C_1\cup \cdots\cup C_m$ where $C_i$, $i=1,\dots,m$ are $\bF_q$-irreducible. Since $C$ is reduced, each $C_i$ is not a $p$-th power. By Corollary~\ref{cor:rational-point} and Lemma~\ref{lemma:sm-Fq-pt_geo-irr}, each $C_i$ is geometrically integral. Denoting $d_i=\deg(C_i)$, we know that $\#C_i(\bF_q)\geq d_i(q-d_i+2)$ by \cite{BH17}*{Corollary~1.4}. Since $C$ is smooth at all of its $\bF_q$-points, $C(\bF_q)$ is a disjoint union of $C_i(\bF_q)$ for $i=1,\dots,m$. Therefore,
\begin{align*}
    \#C(\bF_q) 
    = \sum_{i=1}^{m} \#C_i(\bF_q) 
    \geq \sum_{i=1}^{m} d_i(q-d_i+2)
    &= dq - \sum_{i=1}^{m} d_i^2 + 2d \\
    &\geq dq - \left(\sum_{i=1}^{m} d_i\right)^2 + 2d
    = d(q-d+2),
\end{align*}
as claimed.
\end{proof}

We are now ready to establish a lower bound on the number of $\bF_q$-points for surfaces. We will see that, when $q$ is large, the bound is roughly
$$
    qd(q-d+2)\approx\textup{O}(dq^2).
$$
If the surface is not linear, then this bound grows at least at the rate $\textup{O}(q^{\frac{5}{2}})$ in view of the lower bound $d\geq\sqrt{q}+1$ in Theorem~\ref{mainthm:lower-bound}. This is a direct analogue of the curve case, where the lower bound is
$
    d(q-d+2)\approx\textup{O}(dq).
$

\begin{prop}
\label{prop:lower-bound-rat-points-surfaces}
Suppose that $X\subset\bP^3$ is a smooth Frobenius nonclassical surface over $\bF_q$ of degree $d \leq q+1$. Then
$$
    \#X(\bF_q)\geq
    \frac{(q^3+q^2+q+1)\cdot d(q-d+2)}{q^2+q+d(q-d+2)}. 
$$
\end{prop}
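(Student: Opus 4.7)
The plan is to carry out a double count of incidences between $\bF_q$-rational points of $X$ and $\bF_q$-rational planes in $\bP^3$, then apply the planar lower bound of Lemma~\ref{lem:point-count-curve-smooth-at-rat-pts} to plane sections. Set $N=\#X(\bF_q)$ and, for each $\bF_q$-plane $H\subset\bP^3$, write $c_H=\#(X\cap H)(\bF_q)$. Since $X$ is a smooth hypersurface in $\bP^3$ of degree $d\geq 2$, it is geometrically irreducible, so no $\bF_q$-plane is contained in $X$. Counting pairs $(P,H)$ with $P\in X(\bF_q)\cap H$ in two ways produces
$$
    \sum_H c_H \;=\; N(q^2+q+1).
$$

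The next step is to split the set of $\bF_q$-planes into the subset $B=\{T_PX\colon P\in X(\bF_q)\}$ of tangent planes at $\bF_q$-rational points and its complement. Clearly $|B|\leq N$. For each $P\in X(\bF_q)$, the incidence $(P,T_PX)$ is counted in $\sum_{H\in B}c_H$, and distinct $P$'s give distinct incidences, so $\sum_{H\in B}c_H\geq N$. For $H\notin B$, the section $C_H\colonequals X\cap H$ is a plane curve of degree $d\leq q+1$ in $H\cong\bP^2$, inherits Frobenius nonclassicality from $X$, and is smooth at every $\bF_q$-rational point (because $H\neq T_PX$ for any $P\in X(\bF_q)\cap H$). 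Granting that $C_H$ is reduced, Lemma~\ref{lem:point-count-curve-smooth-at-rat-pts} then yields $c_H\geq d(q-d+2)$.

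Combining these bounds and setting $T=\#\bP^3(\bF_q)=q^3+q^2+q+1$, the partition of the double count gives
$$
    N(q^2+q+1)\;=\;\sum_{H\in B}c_H+\sum_{H\notin B}c_H \;\geq\; N+(T-|B|)\,d(q-d+2)\;\geq\;N+(T-N)\,d(q-d+2).
$$
Subtracting $N$ from both sides and regrouping produces $N\bigl(q^2+q+d(q-d+2)\bigr)\geq T\,d(q-d+2)$, which is exactly the claimed inequality. Note that the gain from $q^2+q+1$ to $q^2+q$ in the denominator of the stated bound is precisely what is furnished by the contribution $\sum_{H\in B}c_H\geq N$; without it one would only reach a slightly weaker inequality with denominator $q^2+q+1+d(q-d+2)$.

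The most delicate part will be justifying the reducedness of $C_H$ for $H\notin B$. Non-reducedness of $C_H$ at a geometric point $Q\in X$ forces $H=T_QX$ together with the local equation of $H$ in $\mathcal{O}_{X,Q}$ failing to be square-free. If $Q\in X(\bF_q)$ this puts $H$ in $B$, which is excluded. If $Q$ is a non-$\bF_q$-point with $T_QX=H$, the fact that $H$ is defined over $\bF_q$ combined with the Frobenius nonclassicality of $X$ forces $\Phi(Q)\in H$ and $T_{\Phi(Q)}X=H$, so the contact locus $\gamma^{-1}(H)$ of the Gauss map is Frobenius-stable. This structural constraint should allow one either to exclude non-reducedness outright in this setting, or to apply Lemma~\ref{lem:point-count-curve-smooth-at-rat-pts} to the reduction $(C_H)_{\mathrm{red}}$ with a careful accounting of degrees that still yields $c_H\geq d(q-d+2)$.
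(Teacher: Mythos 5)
Your argument is, up to a cosmetic reorganization of the incidence count, the same as the paper's. You count all incidences $\sum_H c_H = N(q^2+q+1)$ and then subtract off at least one incidence per rational point coming from the tangent plane; the paper instead restricts the incidence set to ``good'' planes from the outset and bounds above by $N(q^2+q)$ using the observation that each $P\in X(\bF_q)$ lies on at most $q^2+q$ non-tangent planes. Both lead to $N(q^2+q)\geq (T-N)d(q-d+2)$, and the split along $B=\{T_PX : P\in X(\bF_q)\}$ is precisely the paper's good/bad dichotomy. So this is not a genuinely different route.

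The one real gap is exactly the one you flagged: the reducedness of $C_H = X\cap H$ for $H\notin B$, which is needed to invoke Lemma~\ref{lem:point-count-curve-smooth-at-rat-pts}. Your sketch (``the contact locus is Frobenius-stable, and this structural constraint should allow one to exclude non-reducedness or to pass to the reduction with careful degree accounting'') is not a proof; neither branch of that alternative is established, and the degree-accounting route would in fact fail because $d(q-d+2)$ is \emph{not} monotone in $d$ on $[2,q+1]$, so replacing $d$ by $\deg(C_H)_{\mathrm{red}}<d$ can worsen the bound. The clean fix, which is what the paper uses, is Zak's theorem on linear normality: any hyperplane section of a smooth projective variety of dimension $\geq 2$ is reduced (in fact normal), see \cite{Zak93}*{Corollary~I.2.8}. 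Citing that closes the gap and makes your argument complete.
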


\begin{proof}
Let us call an $\bF_q$-plane $H$ ``good'' if the hyperplane section $X\cap H$ is smooth at all $\bF_q$-points. Consider the set
$$
    \mathcal{I} = \left\{
        (H, P) \mid 
        H\text{ is a good plane with } P\in (X\cap H)(\bF_q)
    \right\}.
$$
The number of good planes is at least
$$
    \#(\bP^3)^\ast(\bF_q) - \#X(\bF_q)
    = (q^3+q^2+q+1)-\#X(\bF_q).
$$
For each good plane $H$, we observe that
$
    \# (X\cap H)(\bF_q)\geq d(q-d+2)
$
by Lemma~\ref{lem:point-count-curve-smooth-at-rat-pts}. Here, we are using the fact that a plane section of a smooth surface is a reduced curve (which is a consequence of Zak's theorem \cite{Zak93}*{Corollary~I.2.8}). Thus, we get a lower bound
$$
    \#\mathcal{I} \geq (q^3+q^2+q+1-\#X(\bF_q)) \cdot d(q-d+2).
$$
On the other hand, each $\bF_q$-point of $X$ is contained in at most $q^2+q$ good planes. This is because there are $q^2+q+1$ planes over $\bF_q$ passing through such a point and one of them is the tangent plane. This gives us an upper bound,
$$
    \#\mathcal{I} \leq \#X(\bF_q) \cdot (q^2+q).
$$
Combining the lower and the upper bounds, we obtain,
$$
    (q^3+q^2+q+1-\#X(\bF_q)) \cdot d(q-d+2)
    \leq \#X(\bF_q) \cdot (q^2+q).
$$
Rearranging this inequality, we get the desired result. 
\end{proof}

We end this section with a remark about upper bounds on the number of rational points.

\begin{rmk}
In \cite{HK13_bound}*{Theorem~1.2}, Homma and Kim proved an upper bound for the number of rational points on a hypersurface without an $\bF_q$-linear component. According to \cite{Tir17}*{Theorem~1~(1) and (2)}, this upper bound is achieved by examples in Theorem~\ref{mainthm:upper-bound}~\ref{main:d=q+1_n-odd} and surface examples of degree $\sqrt{q}+1$ under the setting of Theorem~\ref{mainthm:lower-bound}.
\end{rmk}

\section{Lower bounds on degree and Hermitian surfaces}
\label{sect:degree_lower_bound}

In this section, we finish the proof of Theorem~\ref{mainthm:lower-bound}. For the lower bound $d\geq\sqrt{q}+1$, we will present a proof by contradiction, and therefore, will assume the existence of a hypersurface $X\subset\bP^n$ as in the hypothesis except that $d\leq\sqrt{q}$. Our strategy consists of two steps:
\begin{enumerate}[label=\Roman*.]
    \item\label{step1:sharp-lower-bound}
    Finding a $2$-plane $H\subset\bP^n$ over $\bF_q$ such that $X\cap H$ contains a curve component $C$ over $\bF_q$ that is reduced of degree $\geq 2$ and smooth at all its $\bF_q$-points.
    \item\label{step2:sharp-lower-bound}
    Finding a curve $C'\subset C$ over $\bF_q$ of degree $\geq 2$ which is geometrically integral. This curve is Frobenius nonclassical by construction. Hence $\deg(C')\geq\sqrt{q}+1$ by \cite{BH17}*{Corollary~3.2}, contradicting our assumption that $d\leq\sqrt{q}$.
\end{enumerate}
For the last statement in the theorem, the curve case is done in \cite{BH17}*{Corollary~3.2}, so we will prove the assertion in the surface case. In the end of this section, we include a discussion on the Hermiticity in higher dimensions.

\subsection{Linear sections that are smooth at \texorpdfstring{$\bF_q$}{Fq}-points}
\label{subsect:smooth-Fq-points}

Here we prove several Bertini-type results for reduced hypersurfaces over $\bF_q$ that are smooth at all $\bF_q$-points, which will be used to establish Step~\ref{step1:sharp-lower-bound} in our strategy.

\begin{lemma}
\label{lemma:weak-bertini-n>=4}
Let $X\subset\bP^n$ where $n\geq 4$ be a reduced hypersurface over $\bF_q$ of degree $d\leq\frac{q}{2}$ on which every $\bF_q$-point is smooth. Then there exists an $\bF_q$-hyperplane $H$ such that $X\cap H$ is reduced of dimension $n-2$ and smooth at all $\bF_q$-points.
\end{lemma}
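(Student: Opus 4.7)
The plan is a counting argument: I show that the number of \emph{bad} $\bF_q$-hyperplanes --- those for which $X\cap H$ fails either to be reduced of dimension $n-2$ or to be smooth at every $\bF_q$-point --- is strictly less than the total count $\#(\bP^n)^*(\bF_q) = q^n + q^{n-1} + \cdots + 1$. I would partition the bad hyperplanes into three classes: (i) those with $H\subset X$ as a linear component; (ii) those for which $X\cap H$ is reduced of dimension $n-2$ but singular at some $\bF_q$-point; (iii) those for which $X\cap H$ is non-reduced of dimension $n-2$.

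First, class (i) contributes at most $d$ hyperplanes, since $X$ is reduced of degree $d$ and so contains at most $d$ linear components. For class (ii), the hypothesis that $X$ is smooth at every $\bF_q$-point implies that $X\cap H$ is singular at a point $P\in X(\bF_q)\cap H$ exactly when $H = T_PX$; hence the number of such $H$ is at most $\#X(\bF_q)$, which via a Serre-type inequality for hypersurfaces is bounded by $d\cdot q^{n-1} + q^{n-2} + \cdots + 1$.

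The main technical step is class (iii). If $F|_H = G^2 K$ with $\deg G \geq 1$, then a short computation using the Euler relation shows that at every smooth point $P$ of the dimension-$(n-2)$ subvariety $V(G)\subset H$, the partial derivatives of $F$ transverse to $H$ vanish, which forces $T_PX = H$. Thus each bad $H$ in class (iii) arises as the common tangent hyperplane along a dimension-$(n-2)$ subvariety of $X$ --- a Gauss fiber of maximal codimension. I would bound the number of such $H$ by combining degree estimates on the dual variety $X^*\subset(\bP^n)^*$ with the reducedness of $X$, aiming for a bound of the same order $O(d\cdot q^{n-1})$ as class (ii).

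Summing the three bounds and using the assumptions $d\leq q/2$ and $n\geq 4$, the total is strictly less than $\#(\bP^n)^*(\bF_q)$, so a good hyperplane must exist. The main obstacle is the sharp bound for class (iii): since $X$ may be non-reflexive in positive characteristic --- so the Gauss map has positive-dimensional generic fibers --- one must carefully use the reducedness of $X$ together with its smoothness at $\bF_q$-points to rule out pathological dimension-$(n-2)$ contact loci. The hypothesis $d\leq q/2$ is precisely what allows the two dominant terms, from classes (ii) and (iii), each on the order of $d\cdot q^{n-1}\leq q^n/2$, to jointly fit below $\#(\bP^n)^*(\bF_q)\approx q^n$.
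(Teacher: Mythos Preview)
Your overall counting strategy and your treatment of classes~(i) and~(ii) match the paper's proof. The genuine gap is class~(iii), which you yourself flag as ``the main obstacle'' without resolving it. The geometric observation is essentially correct (with the phrasing reversed: it is the partials of $F$ \emph{along} $H$ that vanish on the multiple component $V(G)$, so the surviving transverse partial forces $T_PX=H$ at smooth points of $X$; no Euler relation is needed). But this does not yield a usable bound. In positive characteristic the Gauss map can be inseparable with positive-dimensional generic fibers, and degree estimates on the dual variety $X^*$ give bounds of order $d(d-1)^{n-1}$ rather than $O(dq^{n-1})$. Moreover, $V(G)$ may lie entirely in the singular locus of $X$, which for a reduced hypersurface can have dimension $n-2$; at such points the tangent-hyperplane conclusion fails and the argument says nothing.

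The paper avoids this route altogether. It invokes \cite{ADL22hyper}*{Proposition~4.6}, which bounds the number of $\bF_q$-hyperplanes $H$ with $X\cap H$ non-reduced or of the wrong dimension by $d(d-1)(q+1)^2+1$. This is $O(d^2q^2)$, and for $n\geq 4$ it is \emph{negligible} next to the class~(ii) term $dq^{n-1}$, not comparable to it as you anticipated. The final inequality to verify becomes
\[
q^n > (d-1)q^{n-1} + d(d-1)(q+1)^2 + 1,
\]
which holds easily once $d\leq q/2$ and $n\geq 4$. The decisive point is that the non-reduced-section count is $O(q^2)$ uniformly in $n$, coming from an external Bertini-type estimate rather than from the Gauss map.
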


\begin{proof}
Our strategy is to show that the number of $\bF_q$-hyperplanes in $\bP^n$ is greater than the number of $\bF_q$-hyperplanes $H$ that satisfy at least one of the following bad conditions:
\begin{itemize}
    \item $X\cap H$ is not reduced.
    \item $X\cap H$ is not of dimension $n-2$. 
    \item $X\cap H$ is singular at some $\bF_q$-point. 
\end{itemize}
By \cite{ADL24hyper}*{Proposition~4.6}, the number of $\bF_q$-hyperplanes $H$ such that $X\cap H$ is not reduced or does not have dimension $n-2$ is at most
$$
    d(d-1)(q+1)^2 + 1.
$$
On the other hand, $X\cap H$ is singular at an $\bF_q$-point $P\in X$ implies that $H = T_PX$, so the number of hyperplanes $H$ for which $X\cap H$ is singular at some $\bF_q$-point is at most $\#X(\bF_q)$, the total number of $\bF_q$-points on $X$. It was proved independently by Serre \cite{Ser91} and Sorensen \cite{Sor94} that
$$
    \#X(\bF_q)\leq dq^{n-1} + q^{n-2} + \cdots + q + 1.
$$
Thus, the number of bad hyperplanes is at most the sum of these contributions:
\begin{align*}
     \left(
        dq^{n-1} + q^{n-2} + \cdots + q + 1
    \right)
    + d(d-1)(q+1)^2 + 1.
\end{align*}

Because the total number of $\bF_q$-hyperplanes in $\bP^n$ is $\sum_{i=0}^nq^{i}$, the result follows if we can prove the following inequality:
$$
    \sum_{i=0}^nq^{i}
    > \left(
        dq^{n-1} + q^{n-2} + \cdots + q + 1
    \right) + d(d-1)(q+1)^2 + 1
$$
which is equivalent to
\begin{align}
\label{eq:good-planes-exist}
    q^{n} > (d-1) q^{n-1} +  d(d-1)(q+1)^2 + 1.
\end{align}
Using the assumptions that $d\leq\frac{q}{2}$ and $n\geq 4$, we get
\begin{align*}
    (d-1)q^{n-1} + d(d-1)(q+1)^2 + 1
    & \leq \left(\frac{q}{2} - 1\right)q^{n-1}
    + \frac{q}{2}\left(\frac{q}{2} - 1\right)(q+1)^2
    + 1 \\
    & = \frac{q^n}{2} - q^{n-1}
    + \frac{q^4}{4} - \frac{3q^2}{4} - \frac{q}{2}
    + 1
    < q^n.
\end{align*}
The last inequality can be proved by computing directly that the real function
$$
    f(x) = x^n - \left(
        \frac{x^n}{2} - x^{n-1} + \frac{x^4}{4} - \frac{3x^2}{4} - \frac{x}{2} + 1
    \right)
    \quad\text{where}\quad
    n\geq 4
$$
satisfies $f(x) > 0$ for all $x\geq 2$. This proves inequality~\eqref{eq:good-planes-exist} and establishes the existence of an $\bF_q$-hyperplane satisfying the desired conditions.
\end{proof}

\begin{lemma}
\label{lemma:weak-bertini-n=3}
Let $X\subset \bP^3$ be a reduced surface of degree $d$ over $\bF_q$ such that $2\leq d\leq\sqrt{q}$ and every $\bF_q$-point on $X$ is smooth. Then there exists an $\bF_q$-plane $H$ such that $X\cap H$ contains a curve component $C$ over $\bF_q$ of degree $\geq 2$ that is reduced and smooth at its $\bF_q$-points.
\end{lemma}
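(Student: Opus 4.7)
The plan is to adapt the Bertini-counting argument of Lemma~\ref{lemma:weak-bertini-n>=4} to the case $n=3$, exploiting the stronger degree hypothesis $d\leq\sqrt{q}$ to offset the smaller size of $\#(\bP^3)^{*}(\bF_q)=q^3+q^2+q+1$, while weakening the target to finding a single good component $C$ rather than arranging that $X\cap H$ itself is entirely reduced and smooth at $\bF_q$-points.

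First I would reduce to the case that $X$ is $\bF_q$-irreducible of degree $d_Y\geq 2$ by extracting such a component $Y\subseteq X$. If every $\bF_q$-irreducible component of $X$ had degree~$1$, all components would be $\bF_q$-planes, and the pairwise intersections of any two of them would be $\bF_q$-lines along which $X$ is singular at $\bF_q$-points, contradicting the hypothesis. The component $Y$ remains smooth at its own $\bF_q$-points, because any $\bF_q$-point of $Y$ lying on another component of $X$ would be a singular $\bF_q$-point of $X$.

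Next I would verify the following sufficient condition: it is enough to find an $\bF_q$-plane $H$ for which $Y\cap H$ is reduced of pure dimension~$1$ and smooth at all of its $\bF_q$-points. For such $H$, not every $\bF_q$-irreducible component of $Y\cap H$ can be an $\bF_q$-line, because any two $\bF_q$-lines in the plane $H\cong\bP^2$ meet at an $\bF_q$-point, and $d_Y\geq 2$ together with reducedness would force at least two such lines, creating a singular $\bF_q$-point. Thus some component $C\subseteq Y\cap H$ has $\deg C\geq 2$, and it inherits reducedness and smoothness at its $\bF_q$-points from $Y\cap H$ because at each $\bF_q$-point of $C$ only one branch of $Y\cap H$ passes through (by smoothness), so $C$ coincides with $Y\cap H$ locally there.

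To produce such an $H$, I would count the $\bF_q$-planes failing the condition above and show that their number is strictly less than $q^3+q^2+q+1$. The bad planes split into two types: (a) those for which $Y\cap H$ is non-reduced or not of pure dimension $1$, bounded in general by $d_Y(d_Y-1)(q+1)^2+1$ via \cite{ADL22hyper}*{Proposition~4.6}; and (b) those for which $Y\cap H$ is singular at some $P\in Y(\bF_q)$, which forces $H=T_P Y$ and contributes at most $\#Y(\bF_q)\leq d_Y q^2+q+1$ by the Serre--S{\o}rensen bound. The hypothesis $d_Y\leq\sqrt{q}$ makes the type-(b) contribution $O(q^{5/2})$, safely below $q^3$.

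The main obstacle I foresee is sharpening the type-(a) bound. The cited estimate $d_Y(d_Y-1)(q+1)^2+1$ is of order $d_Y^2 q^2$, which at $d_Y\approx\sqrt{q}$ reaches $q^3$ and leaves no slack against $\#(\bP^3)^{*}(\bF_q)$. The refinement I would pursue is to observe that non-reduced plane sections of a reduced $\bF_q$-irreducible surface correspond to planes tangent to $Y$ along a positive-dimensional locus, a proper subset of the dual surface $Y^{*}\subseteq(\bP^3)^{*}$ (whose degree is at most $d_Y(d_Y-1)$), and hence contribute only $O(d_Y^2 q)$ $\bF_q$-points rather than $O(d_Y^2 q^2)$. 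With this sharpening, the total bad count becomes $O(d_Y^2 q + d_Y q^2)=O(q^{5/2})$, well below $q^3+q^2+q+1$, and a good $\bF_q$-plane $H$ exists, completing the argument.
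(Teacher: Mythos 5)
Your structural reduction (passing to an $\bF_q$-irreducible component $Y$ of degree at least $2$, then looking for an $\bF_q$-plane $H$ with $Y\cap H$ reduced, pure one-dimensional and smooth at $\bF_q$-points, from which a component $C$ of degree $\geq 2$ can be extracted) is sound and close in spirit to the paper, which instead splits off the possible single $\bF_q$-plane component $H_0$ and works with $X$ or $Y=X\setminus H_0$ directly. The gap lies in the counting step and in your diagnosis of where the slack comes from.

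You bound $\#Y(\bF_q)$ by the Serre--S{\o}rensen estimate $d_Yq^2+q+1$; combined with the $d_Y(d_Y-1)(q+1)^2+1$ bound for non-reduced sections, the total at $d_Y=\sqrt{q}$ works out to $q^3+2q^2+2q+2-2q^{3/2}-q^{1/2}$, which exceeds $q^3+q^2+q+1$ for $q\geq 4$. So with Serre--S{\o}rensen, the counting genuinely fails, and you correctly sense a shortfall. But the paper's fix is not a finer analysis of non-reduced sections: it is the replacement of Serre--S{\o}rensen by the Homma--Kim bound $\#X(\bF_q)\leq (d-1)q^2+dq+1$ (valid when $X$ has no $\bF_q$-linear component, which is exactly why they separate off the possible plane $H_0$). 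With Homma--Kim and the unmodified $d(d-1)(q+1)^2+1$, the total bad count at $d=\sqrt{q}$ equals $q^3+q^2+q+2-q^{3/2}-q^{1/2}$, which is strictly less than $q^3+q^2+q+1$ since $q^{3/2}+q^{1/2}>1$; your assertion that the $d(d-1)(q+1)^2+1$ bound ``leaves no slack'' is therefore mistaken — the cancellation of the leading $q^3$ term is exact, and the next-order terms work out. Your proposed alternative refinement — replacing the non-reducedness bound by $O(d^2q)$ via a sublocus of the dual surface — is not justified as stated: the degree of the dual of a degree-$d$ surface in $\bP^3$ is on the order of $d(d-1)^2$, not $d(d-1)$ (you have transposed the plane-curve formula), and beyond that, turning ``the locus of planes tangent along a curve is a proper subvariety of $Y^{*}$'' into an explicit degree bound for that subvariety (especially when $Y$ is only $\bF_q$-irreducible and may be geometrically reducible or singular along curves) is a substantial missing argument, not a routine observation. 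The cleaner and essentially necessary move here is to invoke Homma--Kim, exactly as the paper does.
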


\begin{proof}
The proof is similar to that of Lemma~\ref{lemma:weak-bertini-n>=4}. The only difference is that we need to apply a refined bound on the number of $\bF_q$-points on $X$ by Homma and Kim \cite{HK13_bound}. The theorem \cite{HK13_bound}*{Theorem~1.2} in the case of surfaces asserts that
\begin{align}
\label{eq:hk-bound-surfaces}
    \# X(\bF_q) \leq (d-1)q^{2}+d q + 1
\end{align}
provided that $X$ has no $\bF_q$-linear component. Since $X$ has degree $\geq 2$ and is smooth at all $\bF_q$-points, it contains at most one $\bF_q$-plane counted with multiplicity. This implies that $X$ contains an $\bF_q$-irreducible components $X'$ satisfying the assumptions of the lemma. Hence, by replacing $X$ with $X'$ if necessary, we can assume that $X$ is irreducible over $\bF_q$. This in turn allows us to apply~\eqref{eq:hk-bound-surfaces} directly.

As in the proof of Lemma~\ref{lemma:weak-bertini-n>=4}, the number of bad planes defined over $\bF_q$ is at most
\begin{align*}
    \# X(\bF_q) +  d(d-1)(q+1)^2 + 1
    \leq (d-1)q^{2}+d q + 1 +  d(d-1)(q+1)^2 + 1.
\end{align*}
The conclusion will follow if we can show that the total number of $\bF_q$-planes exceeds the number of bad planes. Thus, it suffices to show that:
\begin{align}\label{eq:hk-bound-surface-step1}
    q^{3}+q^{2}+q+1 > (d-1)q^{2}+d q + 1 +  d(d-1)(q+1)^2 + 1
\end{align}
whenever $d\leq \sqrt{q}$. The right hand side of the inequality increases as $d$ increases, so it suffices to establish the inequality \eqref{eq:hk-bound-surface-step1} in the case $d=\sqrt{q}$. In other words, it suffices to prove the inequality
$$
     q^{3} + q^{2} + q + 1
     > (\sqrt{q}-1)q^{2} + \sqrt{q}\cdot q + 1
     + \sqrt{q}(\sqrt{q}-1)(q+1)^2 + 1.
$$
This inequality can be directly checked for each $q\geq 2$. This justifies \eqref{eq:hk-bound-surface-step1} and furnishes an $\bF_q$-plane $H$ such that $C = H\cap X$ is a reduced curve of degree $d\geq 2$ that is smooth at all its $\bF_q$-points.
\end{proof}

\begin{cor}
\label{cor:curve-section-sm-Fq}
Let $X\subset\bP^n$, where $n\geq 3$, be a reduced hypersurface over $\bF_q$ of degree $d$ such that $2\leq d\leq\sqrt{q}$ and every $\bF_q$-point on $X$ is smooth. Then there exists an $\bF_q$-plane $H$ such that $X\cap H$ contains a curve component $C$ over $\bF_q$ of degree $\geq 2$ that is reduced and smooth at all its $\bF_q$-points.
\end{cor}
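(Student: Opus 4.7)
The plan is a straightforward induction on $n \geq 3$, using the two Bertini-type results above as the base and the inductive step. The base case $n=3$ is precisely the content of Lemma~\ref{lemma:weak-bertini-n=3}, so nothing further is needed there.

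For the inductive step, assume $n \geq 4$ and that the corollary is already known in dimension $n-1$. The hypotheses $d \geq 2$ and $d \leq \sqrt{q}$ force $q \geq 4$, in which case $\sqrt{q} \leq q/2$, so $d \leq q/2$ and Lemma~\ref{lemma:weak-bertini-n>=4} applies to $X$. It produces an $\bF_q$-hyperplane $H' \subset \bP^n$ such that $Y \colonequals X \cap H'$ is a reduced subscheme of $H'$ of pure dimension $n-2$ that is smooth at all of its $\bF_q$-points. Viewing $Y$ as a hypersurface in $H' \cong \bP^{n-1}$, its degree is still $d$ and it still satisfies $2 \leq d \leq \sqrt{q}$, so the inductive hypothesis applied to $Y$ yields an $\bF_q$-plane $H \subset H'$ such that $Y \cap H$ contains a reduced curve component $C$ over $\bF_q$ of degree at least $2$ that is smooth at $\bF_q$-points. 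Since $Y \cap H = X \cap H$, this $H$ is the plane we need.

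There is essentially no obstacle here: both technical inputs — the reducedness and smoothness-at-$\bF_q$-points of a suitable hyperplane section of a reduced hypersurface, and the existence of a good planar section in dimension~$3$ — are exactly what the two preceding lemmas deliver. The only minor point to verify is the numerical comparison $\sqrt{q} \leq q/2$, used to invoke Lemma~\ref{lemma:weak-bertini-n>=4}; this holds whenever $q \geq 4$, which is the entire range in which the assumption $2 \leq d \leq \sqrt{q}$ is nonvacuous. Consequently, the whole argument amounts to chaining the two lemmas together by induction on the ambient dimension.
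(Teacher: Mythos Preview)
Your proof is correct and follows essentially the same approach as the paper: both reduce to the $n=3$ case (Lemma~\ref{lemma:weak-bertini-n=3}) by repeatedly applying Lemma~\ref{lemma:weak-bertini-n>=4}, using the observation that $2\leq\sqrt{q}$ forces $\sqrt{q}\leq q/2$. The only difference is that the paper phrases the descent as ``apply Lemma~\ref{lemma:weak-bertini-n>=4} repeatedly'' while you set it up as a formal induction on $n$.
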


\begin{proof}
The assumption $2\leq\sqrt{q}$ is equivalent to $\sqrt{q}\leq\frac{q}{2}$, so we can apply Lemma~\ref{lemma:weak-bertini-n>=4} repeatedly until getting a linear subspace $H'\cong\bP^3$ over $\bF_q$ such that $X\cap H'$ is a reduced surface that is smooth at all $\bF_q$-points. By Lemma~\ref{lemma:weak-bertini-n=3}, there exists a plane $H\subset H'$ over $\bF_q$ such that $X\cap H$ satisfies the desired property.
\end{proof}

\subsection{Existence of transverse lines to plane curves}
\label{subsect:tran-line_curve}

As an intermediate step, we establish a few results that guarantee the existence of a transverse $\bF_q$-line to a reduced plane curve with smooth $\bF_q$-points under the assumption that $d\leq\sqrt{q}$.

\begin{lemma}
\label{lemma:count-of-non-transverse-lines-geom-irred}
Let $C\subset\bP^2$ be a geometrically integral curve of degree $d$ over $\overline{\mathbb{F}_q}$ that is smooth at all $\bF_q$-points. Then the number of $\bF_q$-lines that are not transverse to $C$ is at most
$$
    \frac{1}{2}(d-1)(d-2) + d(d-1)q + 1.
$$
\end{lemma}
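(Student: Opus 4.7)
I would split the non-transverse $\bF_q$-lines into two families according to how they witness non-transversality, and bound each family separately. The case $d=1$ is trivial: $C$ itself is the only non-transverse $\bF_q$-line, matching the ``$+1$'' in the bound. So I focus on $d\geq 2$, in which case no $\bF_q$-line is contained in the irreducible $C$, and every non-transverse $\bF_q$-line $L$ meets $C$ at some point $P$ with $i_P(L,C)\geq 2$. This $P$ is either singular on $C$ (Family I) or smooth on $C$ with $L=T_PC$ (Family II).

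\emph{Family I.} For a reduced, geometrically irreducible plane curve of degree $d$, the sum of the $\delta$-invariants of its singularities is bounded by the arithmetic genus, so $|\mathrm{Sing}(C)|\leq\binom{d-1}{2}$. The hypothesis that $C$ is smooth at $\bF_q$-points forces every singular point $P$ to lie in $\bP^2(\overline{\bF_q})\setminus\bP^2(\bF_q)$. An $\bF_q$-line through such a $P$ is Galois-invariant and so must contain the entire Frobenius orbit of $P$; since this orbit has at least two points, there is at most one such line. Hence Family I contributes at most $\binom{d-1}{2}=\tfrac{1}{2}(d-1)(d-2)$ lines.

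\emph{Family II.} Consider the Gauss map $\gamma\colon C^{\mathrm{sm}}\to\bP^{2*}$, $P\mapsto T_PC$. Its image $C^{*}\subset\bP^{2*}$ is a curve of degree at most $d(d-1)$, the class of $C$ (in the degenerate strange-curve case, $C^{*}$ is contained in a line, so its $\bF_q$-points trivially number at most $q+1$, much less than our target). Every Family~II line corresponds to an $\bF_q$-point of $C^{*}$, so it suffices to bound $\#C^{*}(\bF_q)$. For this I would use polar curves: for each $Q\in\bP^2(\bF_q)$, the polar $\partial_Q C$ has degree $d-1$ and shares no component with the irreducible $C$, so by Bezout $|C\cap\partial_Q C|\leq d(d-1)$; since a tangent $\bF_q$-line through $Q$ at a smooth point $P$ gives a point of $C\cap\partial_Q C$, at most $d(d-1)$ tangent $\bF_q$-lines pass through any fixed $\bF_q$-point. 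Double-counting incidence pairs $(Q,L)$ with $Q\in L(\bF_q)$ then gives
\[
  N_{\mathrm{II}}\,(q+1)\;\leq\;(q^2+q+1)\,d(d-1),
\]
so $N_{\mathrm{II}}\leq\bigl(q+\tfrac{1}{q+1}\bigr)d(d-1)$, which rounds down to $d(d-1)q+1$.

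Adding the Family~I and Family~II bounds gives the claimed $\tfrac{1}{2}(d-1)(d-2)+d(d-1)q+1$. The main obstacle is the rounding step at the end of Family~II: the double-counting is only tight enough to produce the integer ceiling $d(d-1)q+1$ when $d(d-1)\leq q+1$, which is safely the regime in which the lemma will be invoked (subsequent applications have $d\leq\sqrt{q}$). In greater generality one would need to replace the polar double-count by a sharper bound on $\#C^{*}(\bF_q)$, for instance a Sziklai--Homma--Kim-type estimate applied to the dual curve of degree $\leq d(d-1)$, but the polar approach has the virtue of being entirely self-contained.
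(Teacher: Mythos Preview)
Your Family~I argument is essentially identical to the paper's: bound the singular points of a geometrically irreducible plane curve by $\binom{d-1}{2}$, and observe that a non-$\bF_q$-point lies on at most one $\bF_q$-line.

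For Family~II, your polar double-count does not prove the lemma as stated. From
\[
  N_{\mathrm{II}}(q+1)\leq (q^2+q+1)\,d(d-1)
\]
you only get $N_{\mathrm{II}}\leq d(d-1)q + \frac{d(d-1)}{q+1}$, and the integer rounding to $d(d-1)q+1$ requires $d(d-1)<2(q+1)$. You flag this yourself, and you are right that the only downstream use (Corollary~\ref{cor:transverse-line-plane-curves}) has $d\leq\sqrt{q}$, so the regime suffices for the paper's purposes; but the lemma is asserted for all $d$, and your argument leaves that open.

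The paper's fix is exactly the fallback you mention, but with a lighter tool than Sziklai--Homma--Kim: it simply applies the Serre--S\o rensen bound $\#E(\bF_q)\leq \deg(E)\,q+1$ to the dual curve $C^*$, whose degree is at most $d(d-1)$, giving $N_{\mathrm{II}}\leq d(d-1)q+1$ directly and without any restriction on $d$. (The paper includes a remark that this bound holds even when $E$ is not defined over $\bF_q$, since S\o rensen's proof goes through verbatim.) So the cleanest repair is to drop the polar double-count and invoke Serre--S\o rensen on $C^*$; your polar argument is, in effect, an incidence proof of a weakened version of that same bound.
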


\begin{proof}
A line $L$ is not transverse to the curve $C$ if and only if it passes through a singular point of $C$ or is the tangent line at a smooth point of $C$. We will compute an upper bound for the number of $\bF_q$-lines in each of the two categories. 

Recall that the number of singular points of a geometrically integral plane curve is bounded by its arithmetic genus $\frac{1}{2} (d-1)(d-2)$.
By hypothesis, each singular point of $C$ is \emph{not} defined over $\bF_q$. Thus, each singular point has at most one $\bF_q$-line passing through it, which gives a total contribution of at most
\begin{equation}
\label{eqn:singuler-point-on-curve}
    \frac{1}{2}(d-1)(d-2)
\end{equation}
many $\bF_q$-lines passing through a singular point of $C$.

To estimate the number of tangent $\bF_q$-lines to $C$, note that the dual curve $C^{\ast}$ has degree at most $d(d-1)$. It follows that
\begin{equation}
\label{eqn:tangent-to-curve}
    \#\{\bF_q\text{-lines tangent to }C\}
    \leq \#C^{\ast}(\bF_q)
    \leq \deg(C^{\ast})q + 1
    \leq d(d-1)q + 1
\end{equation}
where the second inequality follows by applying the Serre--S\o rensen bound to $C^{\ast}$. (The bound says that $\#E(\bF_q)\leq\delta q+1$ for any plane curve $E$ of degree $\delta$.) Adding up \eqref{eqn:singuler-point-on-curve} and \eqref{eqn:tangent-to-curve} gives the desired bound.
\end{proof}

\begin{rmk}
In the proof above, we used the result $\# E(\bF_q)\leq dq+1$ for any plane curve $E\subset\bP^2$. This result is a special case of the more general result for hypersurfaces independently proved by Serre~\cite{Ser91} and S\o rensen~\cite{Sor94}. However, in those papers, the varieties are defined over $\bF_q$, whereas in our lemma, $E$ is a plane curve defined over $\overline{\bF_q}$, and not necessarily over $\bF_q$. The result $\# E(\bF_q)\leq dq+1$ nonetheless holds even in this more general case. Indeed, the same proof in S\o rensen~\cite{Sor94}*{Theorem 2.1} goes through even if $E$ is not defined over $\bF_q$.
\end{rmk}

Next, we generalize the previous result without the hypothesis on geometric irreducibility.

\begin{lemma}
\label{lemma:count-of-non-transverse-lines}
Let $C\subset \bP^2$ be a reduced curve of degree $d$ over $\overline{\mathbb{F}_q}$ on which every $\bF_q$-point is smooth. Then the number of $\bF_q$-lines not transverse to $C$ is at most
$$
    \frac{1}{2}d^2 + qd^2 - qd + \frac{1}{2}d.
$$
\end{lemma}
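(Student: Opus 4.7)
The plan is to reduce to the geometrically irreducible case treated in Lemma~\ref{lemma:count-of-non-transverse-lines-geom-irred} by decomposing $C$ into its geometrically irreducible components, applying that lemma term-by-term, and then correcting for the extra non-transverse lines created by passing through intersection points of distinct components.

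First I would write $C = C_1 \cup \cdots \cup C_m$ with $\deg(C_i) = d_i$ and $\sum_i d_i = d$; since $C$ is reduced, the $C_i$ are pairwise distinct. The hypothesis transfers to each $C_i$: if $P \in C_i$ is an $\mathbb{F}_q$-point, then $P$ is a smooth point of $C$, so $P$ lies on a unique geometrically irreducible component and is smooth there. Next I would observe that an $\mathbb{F}_q$-line $L$ fails to be transverse to $C$ if and only if either (a) $L$ is not transverse to some $C_i$, or (b) $L$ passes through a point of $C_i \cap C_j$ for some $i \neq j$. For (a), Lemma~\ref{lemma:count-of-non-transverse-lines-geom-irred} contributes at most $\sum_{i=1}^m \bigl[\tfrac{1}{2}(d_i-1)(d_i-2) + d_i(d_i-1)q + 1\bigr]$. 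For (b), Bezout gives $|C_i \cap C_j| \leq d_i d_j$, and every such intersection point is a singular point of $C$, hence by hypothesis is not an $\mathbb{F}_q$-point, so at most one $\mathbb{F}_q$-line passes through it. This contributes at most $\sum_{i<j} d_i d_j = \tfrac{1}{2}\bigl(d^2 - \sum_i d_i^2\bigr)$.

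Summing the two contributions yields
$$
    q\sum_{i} d_i^2 + \tfrac{1}{2}d^2 - \tfrac{3}{2}d - qd + 2m,
$$
and I would finish by invoking the elementary inequalities $\sum_i d_i^2 \leq (\sum_i d_i)^2 = d^2$ and $m \leq d$ (since each $d_i \geq 1$) to bound this by $\tfrac{1}{2}d^2 + qd^2 - qd + \tfrac{1}{2}d$, matching the claim.

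The computation is straightforward; the only subtlety worth being careful about is step (b), namely that the intersection points of distinct components automatically fall outside $\mathbb{P}^2(\mathbb{F}_q)$ thanks to the smooth-at-$\mathbb{F}_q$-points hypothesis, which is exactly what keeps the ``at most one $\mathbb{F}_q$-line per point'' estimate in force and prevents the bound from blowing up.
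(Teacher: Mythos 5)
Your proof is correct and follows essentially the same route as the paper: decompose $C$ into geometrically irreducible components, split non-transverse $\bF_q$-lines into those not transverse to some $C_i$ (handled by Lemma~\ref{lemma:count-of-non-transverse-lines-geom-irred}) and those through a pairwise intersection point (handled by B\'ezout and the observation that such points are singular, hence not $\bF_q$-points), then sum and apply $\sum_i d_i^2 \leq d^2$ and $m \leq d$. The algebra checks out and matches the paper's computation.
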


\begin{proof}
Let us write $C = C_1\cup C_2\cup\cdots\cup C_m$ where each $C_i$ is geometrically integral of degree~$d_i$. Every $\bF_q$-line not transverse to $C$ is either not transverse to some $C_i$ (Type~I) or passes through an intersection of $C_i$ and $C_j$ for some $i\neq j$ (Type~II). The number of non-transverse $\bF_q$-lines of Type I can be bounded by applying Lemma~\ref{lemma:count-of-non-transverse-lines-geom-irred} to each $C_i$ and summing up the contributions:
$$
    \#\{\text{Type~I non-transverse } \bF_q\text{-lines}\}
    \leq\sum_{i=1}^{m}\left(
        \frac{1}{2}(d_i-1)(d_i-2)+d_i(d_i-1)q+1
    \right).
$$
To give an upper bound on the number of non-transverse $\bF_q$-lines of Type II, we note that any point $P\in C_i\cap C_j$ is a singular point of $C$. Since $C$ is smooth at all $\bF_q$-points, $P$ is not defined over $\bF_q$. Thus, there can be at most one $\bF_q$-line that passes through $P$. Since $C_i\cap C_j$ has at most $d_id_j$ distinct points by B\'ezout's theorem,
$$
    \#\{\text{Type II non-transverse } \bF_q\text{-lines}\}
    \leq\sum_{i<j}d_id_j.
$$
Therefore, the number of $\bF_q$-lines not transverse to $C$ is at most
\begin{align*}
    & \sum_{i=1}^{m} \left(\frac{1}{2}(d_i-1)(d_i-2)+d_i(d_i-1)q+1\right) + \sum_{i<j}d_i d_j \\
    &= \sum_{i=1}^{m} \left(\frac{1}{2}(d_i^2-3d_i+2)+(d_i^2-d_i)q+1\right) + \sum_{i<j}d_i d_j \\ 
    &=\frac{1}{2}\left(
        \sum_{i=1}^{m} d_i^2+\sum_{i<j} 2d_id_j
    \right)
    + \sum_{i=1}^{m} q d_i^2
    - \sum_{i=1}^{m} \left(q + \frac{3}{2}\right)d_i
    + \sum_{i=1}^{m} 2 \\
    &\leq \frac{1}{2}d^2 + qd^2 - \left(q + \frac{3}{2}\right)d+2m \\
    &= \frac{1}{2}d^2+qd^2 - qd + \frac{1}{2}d + (2m-2d) \\
    &\leq\frac{1}{2}d^2+qd^2-qd + \frac{1}{2}d
\end{align*}
where the last inequality follows from the fact that $m\leq d$.
\end{proof}

\begin{cor}
\label{cor:transverse-line-plane-curves}
Let $C\subset \bP^2$ be a reduced curve over $\overline{\mathbb{F}_q}$ of degree $d\leq\sqrt{q}$ that is smooth at all $\bF_q$-points. Then there exists an $\bF_q$-line $L$ transverse to $C$.
\end{cor}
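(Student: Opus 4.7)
The plan is to apply Lemma~\ref{lemma:count-of-non-transverse-lines} and compare the resulting upper bound on non-transverse $\bF_q$-lines with the total number $q^2+q+1$ of $\bF_q$-lines in $\bP^2$. If the total strictly exceeds the bound on non-transverse lines, then a transverse $\bF_q$-line must exist.

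More explicitly, first I would observe that the hypotheses of Lemma~\ref{lemma:count-of-non-transverse-lines} are satisfied, so the number of non-transverse $\bF_q$-lines is at most
$$
    N(d,q)\colonequals\tfrac{1}{2}d^2 + qd^2 - qd + \tfrac{1}{2}d.
$$
Thus it suffices to prove the inequality $q^2+q+1 > N(d,q)$ whenever $d\leq\sqrt{q}$. Treating $N(d,q)$ as a function of $d$ with $q$ fixed, its derivative with respect to $d$ is $d + 2qd - q + \tfrac{1}{2}$, which is positive for all $d\geq 1$ and $q\geq 1$. Hence $N(d,q)$ is increasing in $d$ on $[1,\sqrt{q}]$, and it is enough to establish the inequality at the extremal value $d=\sqrt{q}$.

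Substituting $d=\sqrt{q}$ into $N(d,q)$ gives
$$
    N(\sqrt{q},q) = \tfrac{1}{2}q + q^2 - q\sqrt{q} + \tfrac{1}{2}\sqrt{q},
$$
so the inequality $q^2+q+1 > N(\sqrt{q},q)$ reduces to
$$
    \tfrac{1}{2}q + q\sqrt{q} - \tfrac{1}{2}\sqrt{q} + 1 > 0,
$$
which is manifestly true for every $q\geq 1$. This yields an $\bF_q$-line $L$ transverse to $C$, completing the argument.

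The argument is essentially a head-count, so there is no serious obstacle; the only mild point of care is to ensure that the non-transverse bound $N(d,q)$ is monotone in $d$ so that the verification reduces to the single numerical inequality above.
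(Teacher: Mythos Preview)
Your proposal is correct and follows essentially the same approach as the paper: apply Lemma~\ref{lemma:count-of-non-transverse-lines} and compare the resulting bound with the total number $q^2+q+1$ of $\bF_q$-lines. The only cosmetic difference is in how the numerical inequality is verified---the paper simply drops the negative term $-(q-\tfrac{1}{2})d$ and bounds $\tfrac{1}{2}d^2+qd^2\leq q^2+\tfrac{q}{2}$, whereas you argue via monotonicity in $d$ and substitute $d=\sqrt{q}$; both are valid and equally elementary.
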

\begin{proof}
It suffices to show that the total number of $\bF_q$-lines in $\bP^2$ exceeds the number of $\bF_q$-lines not transverse to $C$. By Lemma~\ref{lemma:count-of-non-transverse-lines} and the hypothesis that $d\leq\sqrt{q}$,
\begin{align*}
    &\#\{\bF_q\text{-lines not transverse to }C\}
    \leq\frac{1}{2}d^2+qd^2 - \left(q - \frac{1}{2}\right)d
    < \frac{1}{2}d^2+qd^2 \\
    &\leq \frac{1}{2}\left(\sqrt{q}\right)^2 +q\left(\sqrt{q}\right)^2
    = q^2+\frac{q}{2}
    < q^2 + q + 1 = \#\{\bF_q\text{-lines in } \bP^2\},
\end{align*}
as desired.
\end{proof}

\begin{rmk}
In \cite{ADL24hyper}*{Theorem~4.2}, we proved that every reduced plane curve $C$ of degree $d$ in $\bP^2$ admits a transverse $\bF_q$-line once $q\geq \frac{3}{2}d(d-1)$. Corollary~\ref{cor:transverse-line-plane-curves} relaxes the bound to $q\geq d^2$ at the cost of the additional hypothesis that $C$ is smooth at its $\bF_q$-points.
\end{rmk}

\subsection{Proof of the lower bound \texorpdfstring{$\mathbf{d\geq\sqrt{q}+1}$}{d>=sqrt(q)+1}}
\label{subsect:geo-irr_lower-bound}

To establish Step~\ref{step2:sharp-lower-bound}, we need one more result concerning the existence of a smooth $\bF_q$-point and geometric integrality for Frobenius nonclassical curves.

\begin{lemma}
\label{lemma:tran-line_Fq-only}
Let $C\subset \bP^2$ be a Frobenius nonclassical curve over $\bF_q$ which is irreducible over $\bF_q$ and admits a transverse $\bF_q$-line $L$. Then $L$ intersects $C$ in $\bF_q$-points only and $C$ is geometrically integral.
\end{lemma}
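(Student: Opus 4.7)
The plan is to deduce both conclusions from a single geometric observation: at a transverse intersection point $P\in L\cap C$, the tangent line $T_PC$ meets $L$ only at $P$, so the Frobenius nonclassical condition $\Phi(P)\in T_PC$ forces $\Phi(P)=P$.

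First I would fix a point $P\in L\cap C$ and record the consequences of transversality. By definition, at every intersection point of $L$ and $C$ the intersection multiplicity is $1$, so in particular $P$ is a smooth point of $C$ and the two lines $L$ and $T_PC$ are distinct. Since both $L$ and $C$ are defined over $\bF_q$, the scheme-theoretic intersection $L\cap C$ is $\Phi$-invariant as a set, so $\Phi(P)\in L\cap C\subseteq L$. On the other hand, the Frobenius nonclassical hypothesis, applied at the smooth point $P$, gives $\Phi(P)\in T_PC$. Thus
\[
    \Phi(P)\in L\cap T_PC.
\]
Because $L$ and $T_PC$ are distinct lines in $\bP^2$, their intersection is a single point, which is $P$. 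Hence $\Phi(P)=P$, i.e.\ $P\in C(\bF_q)$. This gives the first assertion.

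For the geometric irreducibility, I would simply feed the result just obtained into Lemma~\ref{lemma:sm-Fq-pt_geo-irr}. Indeed, transversality guarantees that $L\cap C$ is nonempty (in fact consists of $\deg(C)$ points), and every such point is a smooth $\bF_q$-point of $C$ by what we have proved. Combined with the hypothesis that $C$ is $\bF_q$-irreducible, Lemma~\ref{lemma:sm-Fq-pt_geo-irr} yields that $C$ is geometrically irreducible.

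I do not expect any serious obstacle: the argument is essentially a two-line check once one notices that transversality rules out $T_PC=L$, and the remaining content is purely a citation of the earlier lemma. The only thing to verify carefully is that transverse intersection in the sense used here really does imply smoothness of $C$ at each intersection point (so that Frobenius nonclassicality is applicable there), but this is built into the standard definition of transversality.
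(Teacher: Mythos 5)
Your argument is correct and matches the paper's proof in essence: both hinge on the observation that $\Phi(P)$ lies in both $L$ and $T_PC$, which are distinct by transversality, forcing $\Phi(P)=P$. The paper phrases this as a proof by contradiction (assuming $P\notin C(\bF_q)$ and deducing $L=T_PC$), while you argue directly; either way the final appeal to Lemma~\ref{lemma:sm-Fq-pt_geo-irr} is identical.
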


\begin{proof}
Assume, to the contrary, that there exists $P\in L\cap C$ \emph{not} defined over $\bF_q$. Note that $P$ is a smooth point as $L$ meets $C$ transversely. Under the $q$-th Frobenius endomorphism $\Phi$, we have $\Phi(P)\in T_PC\cap L$. As a result, both $T_PC$ and $L$ contain the distinct points $P$ and $\Phi(P)$, whence $L = T_PC$. This shows that $L$ is tangent to $C$, a contradiction. Therefore, $L$ intersects $C$ in $\deg(C)$ many smooth $\bF_q$-points, which implies that $C$ is geometrically integral by Lemma~\ref{lemma:sm-Fq-pt_geo-irr}.
\end{proof}

We are now ready to prove the lower bound $d\geq\sqrt{q}+1$ in Theorem~\ref{mainthm:lower-bound}.

\begin{thm}
\label{thm:lower-bound_sqrt(q)+1}
Let $X\subset\bP^n$, where $n\geq 2$, be a reduced Frobenius nonclassical hypersurface over $\bF_q$ of degree $d\geq 2$ which is smooth at all its $\bF_q$-points. Then $d\geq\sqrt{q}+1$.
\end{thm}

\begin{proof}
Assume, to the contrary, that there exists a reduced Frobenius nonclassical hypersurface $X\subset\bP^n$ of degree $d$ where $2\leq d\leq\sqrt{q}$. By Corollary~\ref{cor:curve-section-sm-Fq}, there exists an $\bF_q$-plane $H$ such that $X\cap H$ contains a curve component $C$ over $\bF_q$ that is reduced of degree $\geq 2$ and smooth at all $\bF_q$-points. The smoothness at $\bF_q$-points forbids $C$ from being a union of $\bF_q$-lines, so there exists a curve $C'\subset C$ of degree $\geq 2$ irreducible over $\bF_q$.

Being an $\bF_q$-component of a linear section over $\bF_q$ implies that $C'$ is Frobenius nonclassical as a plane curve in $H\cong\bP^2$. On the other hand, $C'$ is smooth at all $\bF_q$-points and we have $\deg(C')\leq\deg(C)\leq\deg(X)\leq\sqrt{q}$, so there exists an $\bF_q$-line transverse to $C'$ by Corollary~\ref{cor:transverse-line-plane-curves}. We conclude that $C'$ is geometrically integral by Lemma~\ref{lemma:tran-line_Fq-only}. However, this implies $\deg(C')\geq\sqrt{q}+1$ according to \cite{BH17}*{Corollary~3.2}, a contradiction.
\end{proof}

\subsection{Frobenius nonclassical surfaces of minimal degrees}
\label{subsect:FrobNoncalSurf_Hermition}

It turns out that smooth Frobenius nonclassical surfaces over $\bF_q$ whose degrees attain the minimum $\sqrt{q}+1$ are Hermitian, that is, they are projectively equivalent over $\bF_q$ to the surface defined by
$$
    x_0^{\sqrt{q}+1} + x_1^{\sqrt{q}+1} + x_2^{\sqrt{q}+1} + x_3^{\sqrt{q}+1} = 0.
$$
Our proof is built upon the curve case proved by Borges and Homma \cite{BH17}*{Corollary~3.2}, which in turn relies on the characterization of Hermitian curves \cite{HKT08}*{Theorem~10.47}. The main idea of our proof in the surface case is to find sufficiently many $\bF_q$-planes that cut out Hermitian curves on the surface.

\begin{lemma}
\label{lemma:min-degree-curve}
Let $C\subset\bP^2$ be a reduced Frobenius nonclassical curve over $\bF_q$ of degree $d=\sqrt{q}+1$ which is smooth at all its $\bF_q$-points. Then $C$ is Hermitian.
\end{lemma}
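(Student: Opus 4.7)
The plan is to reduce to the geometrically irreducible case, which is then handled by Borges and Homma in \cite{BH17}*{Corollary~3.2} (whose proof invokes the intrinsic characterization of Hermitian curves \cite{HKT08}*{Theorem~10.47}). The main task is therefore to show that $C$ admits no nontrivial decomposition.

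First, I would decompose $C = C_1 \cup \cdots \cup C_m$ into its $\bF_q$-irreducible components. Each $C_i$ is Frobenius nonclassical by the geometric definition, reduced because $C$ is, and smooth at its $\bF_q$-points (any such point lies on a single component of $C$ and is smooth on $C$). Theorem~\ref{thm:lower-bound_sqrt(q)+1} then forces each $C_i$ of degree at least $2$ to satisfy $\deg(C_i) \geq \sqrt{q}+1 = d$. Since degrees are additive and sum to $d$, only two configurations remain: either exactly one $C_i$ accounts for all of $C$, or every $C_i$ is an $\bF_q$-line. In the latter case, $m = d \geq 2$, so $C$ contains at least two distinct $\bF_q$-lines; these meet at an $\bF_q$-point, which is then singular on $C$ and contradicts the smoothness hypothesis. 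Hence $m = 1$ and $C$ is $\bF_q$-irreducible.

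Next, because $C$ is reduced its defining polynomial $F$ is not a $p$-th power, and $d = \sqrt{q}+1 \leq q+1$, so Corollary~\ref{cor:rational-point} yields at least one $\bF_q$-point on $C$. This point is smooth by hypothesis, so Lemma~\ref{lemma:sm-Fq-pt_geo-irr} upgrades the $\bF_q$-irreducibility of $C$ to geometric irreducibility. At this stage the hypotheses of \cite{BH17}*{Corollary~3.2} are satisfied, and that result concludes that $C$ is Hermitian.

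The main obstacle I anticipate is the reducibility analysis in the first step: without the sharp degree lower bound of Theorem~\ref{thm:lower-bound_sqrt(q)+1}, ruling out a decomposition of $C$ into several components of small degree would be delicate. The smoothness at $\bF_q$-points plays a complementary role, cleanly eliminating the possibility that $C$ is a union of several $\bF_q$-lines. Once geometric irreducibility is secured, the appeal to Borges--Homma is essentially a black box.
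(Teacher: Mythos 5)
Your proof is correct and uses the same key ingredients as the paper (Corollary~\ref{cor:rational-point}, Lemma~\ref{lemma:sm-Fq-pt_geo-irr}, the observation that two $\bF_q$-lines in the curve would produce a singular $\bF_q$-point, and ultimately \cite{BH17}*{Corollary~3.2}). The one organizational difference is that you invoke Theorem~\ref{thm:lower-bound_sqrt(q)+1} up front to pin down the component structure before establishing geometric irreducibility, whereas the paper reverses the order: it first shows each $\bF_q$-irreducible component $C_i$ is geometrically irreducible (via Corollary~\ref{cor:rational-point} and Lemma~\ref{lemma:sm-Fq-pt_geo-irr}), then picks one of degree at least $2$, and applies \cite{BH17}*{Corollary~3.2} directly to that component. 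Since Theorem~\ref{thm:lower-bound_sqrt(q)+1} is itself proved via \cite{BH17}*{Corollary~3.2}, your route effectively passes through that corollary twice; the paper's ordering avoids the detour, but the logical content is the same and both proofs are valid.
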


\begin{proof}
Because $C$ is smooth at its $\bF_q$-points, it contains at most one $\bF_q$-line component counted with multiplicity. As $\deg(C)\geq 3$, we can find an $\bF_q$-irreducible component $C'\subset C$ with $\deg(C')\geq 2$. Since $C'$ has degree at most $\sqrt{q}+1$ and is not a $p$-th power (as $C$ is reduced), it contains an $\bF_q$-point $P'$ by Corollary~\ref{cor:rational-point}. By hypothesis, $C'$ is smooth at $P'$, so $C'$ is geometrically integral by Lemma~\ref{lemma:sm-Fq-pt_geo-irr}. Applying \cite{BH17}*{Corollary~3.2}, we conclude that $\deg(C')\geq\sqrt{q}+1=\deg(C)$. Thus $C = C'$, and the same source shows that $C = C'$ is Hermitian.
\end{proof}

\begin{lemma}
\label{lemma:Fq-sm-section}
Let $X\subset\bP^n$ be a hypersurface over $\bF_q$ of degree $d$ which is smooth at all its $\bF_q$-points. Then the number of $\bF_q$-hyperplanes $H\subset\bP^n$ such that the intersection $X\cap H$ is proper and smooth at all $\bF_q$-points is at least
$
    q^{n-1}(q-d+1).
$
\end{lemma}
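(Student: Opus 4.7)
The plan is to bound the number of ``bad'' $\bF_q$-hyperplanes---those $H$ for which $X \cap H$ is either improper ($H \subset X$) or singular at some $\bF_q$-point---by $\#X(\bF_q)$, then subtract from the total number of $\bF_q$-hyperplanes $\frac{q^{n+1}-1}{q-1} = q^n + q^{n-1} + \cdots + 1$ to obtain the lower bound.

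The crucial observation is that every bad hyperplane arises as an embedded tangent hyperplane $T_P X$ at some $P \in X(\bF_q)$. If $H \subset X$, I would pick any $P \in H(\bF_q)$: smoothness of $X$ at $P$ forces the local branch of $X$ through $P$ to coincide with $H$, so $T_P X = H$. If $H \not\subset X$ and $X \cap H$ is singular at some $\bF_q$-point $P$, then since $P$ is smooth on both $X$ and on $H$, the singularity of the intersection forces $T_P X \subset H$; as both are hyperplanes, $T_P X = H$. Hence the set of bad hyperplanes injects into $\{T_P X : P \in X(\bF_q)\}$, and its size is at most $\#X(\bF_q)$.

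Applying the Serre--S\o rensen bound $\#X(\bF_q) \leq dq^{n-1} + q^{n-2} + \cdots + q + 1$ (the case $d > q$ is vacuous, since then $q - d + 1 \leq 0$), subtraction yields
$$
    \#\{\text{good } H\} \geq (q^n + q^{n-1} + \cdots + 1) - (dq^{n-1} + q^{n-2} + \cdots + q + 1) = q^{n-1}(q - d + 1),
$$
which is exactly the claimed inequality. The argument presents no real obstacle: once bad hyperplanes are identified with tangent hyperplanes at rational points---a step that uses the smoothness hypothesis at $\bF_q$-points in an essential way, both to rule out tangency arising from a singularity of $X$ and to upgrade the containment $T_P X \subset H$ to equality---the conclusion is a routine telescoping computation.
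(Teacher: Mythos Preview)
Your proof is correct and follows essentially the same approach as the paper: bound the bad hyperplanes (linear components and those tangent at an $\bF_q$-point) by $\#X(\bF_q)$, apply the Serre--S\o rensen bound, and subtract from the total count of $\bF_q$-hyperplanes. Your treatment is slightly more explicit in justifying why a linear component $H\subset X$ is of the form $T_PX$ for some $P\in X(\bF_q)$, but the argument and computation are the same.
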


\begin{proof}
The number of $\bF_q$-hyperplanes that are tangent to $X$ at an $\bF_q$-point or appear as a linear component of $X$ is bounded by $\#X(\bF_q)$. Using the Serre--S\o rensen bound \cites{Ser91,Sor94},
$$
    \# X(\bF_q)\leq 
    dq^{n-1} + \frac{q^{n-1}-1}{q-1}.
$$
Therefore, the number of $\bF_q$-hyperplanes as in the statement is at least
$$
    \frac{q^{n+1}-1}{q-1} - \left(
        dq^{n-1} + \frac{q^{n-1}-1}{q-1}
    \right)
    = \left(\frac{q^2-1}{q-1}\right)q^{n-1} - dq^{n-1}
    = q^{n-1}(q-d+1)
$$
which gives the desired bound.
\end{proof}

\begin{prop}
\label{prop:Hermitian-surf}
Suppose that $X\subset\bP^3$ is a smooth Frobenius nonclassical surface defined over $\bF_q$ of degree $d=\sqrt{q}+1$. Then $X$ is Hermitian.
\end{prop}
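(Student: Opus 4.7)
The strategy is to reduce to the curve case (Lemma~\ref{lemma:min-degree-curve}) by exhibiting many $\mathbb{F}_q$-plane sections of $X$ that are Hermitian curves, and then to promote the Hermitian structure of these sections to the defining polynomial $F$ of $X$. The first step is to apply Lemma~\ref{lemma:Fq-sm-section} with $n = 3$ and $d = \sqrt{q}+1$ to produce at least $q^2(q-\sqrt{q})$ many $\mathbb{F}_q$-planes $H$ for which $C_H := X \cap H$ is a proper section smooth at $\mathbb{F}_q$-points. Smoothness of $X$ combined with Zak's theorem (as invoked in Proposition~\ref{prop:lower-bound-rat-points-surfaces}) forces each $C_H$ to be reduced. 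Since Frobenius nonclassicality is preserved under $\mathbb{F}_q$-hyperplane sections (see the introduction), each $C_H$ is a reduced Frobenius nonclassical plane curve of degree $\sqrt{q}+1$, smooth at its $\mathbb{F}_q$-points. Lemma~\ref{lemma:min-degree-curve} then identifies each $C_H$ as a Hermitian curve, so $F|_H$ equals, up to a scalar in $\mathbb{F}_q^{\times}$, a Hermitian polynomial $\overline{\mathbf{y}} A_H \mathbf{y}^t$ on $H$ for some Hermitian matrix $A_H$ with entries in $\mathbb{F}_q$.

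Next, I would promote the sectional Hermitian structure to $F$ itself. The restriction map from degree-$(\sqrt{q}+1)$ polynomials on $\mathbb{P}^3$ to those on a plane $H = \{\ell_H = 0\}$ has kernel $\ell_H \cdot V_{\sqrt{q}}$, where $V_{\sqrt{q}}$ denotes the space of degree-$\sqrt{q}$ polynomials. The Hermitian condition on $F|_H$ becomes an affine-linear constraint on $F$ modulo $\ell_H V_{\sqrt{q}}$. Intersecting these constraints over sufficiently many good planes whose defining forms $\ell_H$ are in general position should force $F$ to be a Frobenius form $\sum_{i,j=0}^{3} a_{ij} x_i^{\sqrt{q}} x_j$ in the sense of \cite{KKPSSVW22}, and then force the matrix $(a_{ij})$ to satisfy the Hermitian condition $a_{ji} = a_{ij}^{\sqrt{q}}$; this identifies $X$ as a Hermitian surface by Example~\ref{eg:hermitian}. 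An alternative path bypasses the linear algebra by double-counting incidences $(H, P)$ with $P \in C_H(\mathbb{F}_q)$, using the fact that each Hermitian curve $C_H$ has exactly $q\sqrt{q}+1$ many $\mathbb{F}_q$-points, to show that $\#X(\mathbb{F}_q)$ attains the Hermitian maximum $q^2\sqrt{q}+q\sqrt{q}+q+1$ (after careful bookkeeping for tangent planes to $X$ at $\mathbb{F}_q$-points), and then invoking the Homma--Kim characterization \cite{HK16} of Hermitian surfaces by their rational point count.

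The main obstacle is the gluing step: verifying that the Hermitian matrices $A_H$ attached to different sections are consistent enough to descend to a single Hermitian matrix $A$ on $\mathbb{P}^3$. Equivalently, one must confirm that enough good planes with $\ell_H$ in suitably general position exist so that the intersection of the kernel-corrected Hermitian conditions leaves room only for globally Hermitian polynomials. Although Lemma~\ref{lemma:Fq-sm-section} provides far more than a handful of good planes, ensuring that their defining forms are generic enough, and pinning down the scalars relating $F|_H$ to its normalized Hermitian form $\overline{\mathbf{y}} A_H \mathbf{y}^t$, require care. The naive double count in the alternative route similarly falls short of the Hermitian maximum by a polynomial in $q$, so a sharper analysis of the bad planes (tangent planes at $\mathbb{F}_q$-points, whose sections on a Hermitian surface decompose into $\sqrt{q}+1$ concurrent $\mathbb{F}_q$-lines) is needed before Homma--Kim can be applied.
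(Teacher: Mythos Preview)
Your setup matches the paper's: Lemma~\ref{lemma:Fq-sm-section} furnishes at least $q^3-q^2\sqrt{q}$ good $\bF_q$-planes, and Lemma~\ref{lemma:min-degree-curve} shows each good section is a Hermitian curve. The gap you yourself flag---promoting the sectional Hermitian structures to a global one---is exactly where your proposal stops being a proof. Your abstract ``intersect the affine-linear constraints'' plan is not wrong in spirit, but you have not identified a mechanism that makes it work, and the Homma--Kim alternative you sketch requires controlling the $\bF_q$-points on the \emph{bad} (tangent) plane sections, which are singular at an $\bF_q$-point and hence fall outside the hypotheses of Lemma~\ref{lemma:min-degree-curve}.

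The paper's device for the gluing is concrete and short. Since $q^3-q^2\sqrt{q} > q^2+q+1$ for $q\geq 4$, one can choose four good planes that are linearly independent and move them to the coordinate hyperplanes $\{x_i=0\}$. A Hermitian polynomial in three variables has only monomials $x_j^{\sqrt{q}}x_k$, involving at most two distinct variables; hence each $F|_{x_i=0}$ being Hermitian forces every monomial of $F$ with exactly three of the $x_i$ to vanish, while the surviving one- and two-variable monomials assemble into a polynomial $G$ defining a Hermitian surface. Thus
\[
F \;=\; G \;+\; x_0x_1x_2x_3\,R, \qquad \deg R=\sqrt{q}-3.
\]
Now take any good plane $W$ other than the four coordinate ones and eliminate, say, $x_0$. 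Then $G|_W$ still involves only two-variable monomials (each $x_i^{\sqrt{q}}$ restricts to an $\bF_q$-linear combination of $x_1^{\sqrt{q}},x_2^{\sqrt{q}},x_3^{\sqrt{q}}$), whereas $(x_0x_1x_2x_3R)|_W$ is divisible by $x_1x_2x_3$. Since $F|_W$ is Hermitian, the three-variable part must vanish, giving $R|_W=0$. Thus the linear form of each of the remaining $q^3-q^2\sqrt{q}-4$ good planes divides $R$, forcing $\deg R \geq q^3-q^2\sqrt{q}-4$, which contradicts $\deg R=\sqrt{q}-3$ for $q\geq 4$ unless $R=0$.
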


\begin{proof}
Note that $X$ contains no $\bF_q$-linear component due to the hypothesis. Let us call an $\bF_q$-plane $H\subset\bP^3$ ``good'' if $X\cap H$ is smooth at all $\bF_q$-points. By Lemma~\ref{lemma:Fq-sm-section}, $X$ admits at least $q^3-q^2\sqrt{q}$ many good planes. Let $H\subset\bP^3$ be any good plane. Then $C \colonequals X\cap H$ is a Frobenius nonclassical curve over $\bF_q$ of degree $\sqrt{q}+1$ which is smooth at all $\bF_q$-points. As $X$ is smooth, $C$ is a reduced curve by Zak's theorem \cite{Zak93}*{Corollary~I.2.8}. Thus $C$ is Hermitian by Lemma~\ref{lemma:min-degree-curve}.

Note that $q\geq 4$ since $q$ is a square. We claim that there exist four good planes $H_0$, $H_1$, $H_2$, $H_3$ that are linearly independent. Indeed, if this was not true, then the number of good planes would be at most $\#(\bP^2)^\ast(\bF_q) = q^2+q+1$, which is strictly less than $q^3-q^2\sqrt{q}$ for $q\geq 4$, a contradiction. After a change of coordinates, we may assume $H_i = \{x_i = 0\}$ for $i=0,1,2,3$. Let $F$ be a defining polynomial for $X$. Then the fact that $X\cap\{x_i=0\}$ is Hermitian implies that
\begin{itemize}
    \item For each $i$, $F|_{x_i=0}$ defines a Hermitian curve in the remaining three variables.
    \item $F$ does not contain any monomials involving exactly three variables.
\end{itemize}
By collecting the monomials appropriately, we can express
\begin{equation}
\label{hermitian-surface-decomposition}
    F = G + x_0x_1x_2x_3 R
\end{equation}
where $G$ defines a Hermitian surface and $\deg(R)=\sqrt{q}-3$.

Since $X$ admits at least $q^3-q^2\sqrt{q}$ good planes and we have used up $4$ of those planes in the analysis above, there are still at least $q^3-q^2\sqrt{q}-4$ good planes remaining. If
$$
    W = \{a_0x_0+a_1x_1+a_2x_2+a_3x_3 = 0\}
$$
defines a good plane other than $H_0,\dots,H_3$, then, by imposing the relation $\sum_ia_ix_i = 0$ on equation~\eqref{hermitian-surface-decomposition}, we deduce that
$$
    R|_W = 0.
$$
Indeed, if we write, without loss of generality, that $x_0 = -a_0^{-1}(a_1x_1+a_2x_2+a_3x_3)$, then the term $x_0x_1x_2x_3 R$ contributes a nonzero monomial to $F|_W$ which involves $x_1^{i} x_2^{j} x_3^{k}$ with $i,j,k>0$. But such a term cannot appear in a polynomial defining a Hermitian curve.

Therefore, $R$ is divisible by $\sum_ia_ix_i$. Since $W$ is an arbitrary good plane different from $H_0,\dots,H_3$, we conclude that $R$ is divisible by a product of distinct $q^3-q^2\sqrt{q}-4$ linear forms over $\bF_q$. Suppose, for contradiction, that $R\neq 0$. Then we have
$$
    \sqrt{q}-3 = \deg(R)
    \geq q^3-q^2\sqrt{q}-4.
$$
However, $\sqrt{q}-3 < q^3-q^2\sqrt{q}-4$ for $q\geq 4$. This leads to a contradiction, thus $R=0$. We conclude that $F = G$, and thus $X$ is a Hermitian surface.
\end{proof}

\subsection{Evidence for the Hermiticity in higher dimensions}
\label{subsect:FrobForm_Hermitian}

For most examples of Frobenius nonclassical hypersurfaces that we know, their defining polynomials $F$ satisfy the property that $F_{1,0}$ is proportional to a power of $F$. These include all examples in this paper except for the ones classified by Theorem~\ref{mainthm:upper-bound}~\ref{main:d=q+1_n-even}, and also the curve examples in \cite{HV90}*{Theorem~2}. Therefore, it is reasonable to assume this condition while investigating the properties of Frobenius nonclassical hypersurfaces. In the following, we provide evidence for Conjecture~\ref{conj:hermitian} by showing that it holds in odd characteristics under this assumption. The proof relies on the main results from \cite{KKPSSVW22} about $F$-pure thresholds.

\begin{prop}
\label{prop:F10=powerF}
Let $F\in\bF_q[x_0,\dots,x_n]$ be a homogeneous polynomial of degree $\sqrt{q}+1$ which defines a reduced Frobenius nonclassical hypersurface that satisfies
$$
    F_{1,0}
    \colonequals\sum_{i=0}^nx_i^q\frac{\partial F}{\partial x_i}
    = cF^{\sqrt{q}}
    \qquad\text{for some}\qquad
    c\in\bF_q\setminus\{0\}.
$$
Then $F$ is a Frobenius form, namely, it is defined by the expression
$$
    F = \sum_{i=0}^nx_i^{\sqrt{q}}L_i
$$
for some linear polynomials $L_0,\dots,L_n$.
\end{prop}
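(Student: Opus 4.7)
The plan is to show that $F$ lies in the Frobenius bracket power $\mathfrak{m}^{[\sqrt{q}]} = (x_0^{\sqrt{q}}, \dots, x_n^{\sqrt{q}})$ of the irrelevant ideal $\mathfrak{m} = (x_0, \dots, x_n)$ in $R = \bF_q[x_0, \dots, x_n]$. Once this is established, the degree condition $\deg F = \sqrt{q}+1$ will immediately force any expression $F = \sum_i x_i^{\sqrt{q}} G_i$ to have linear $G_i$, producing exactly the Frobenius form predicted by the proposition.

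First I would translate the hypothesis into an ideal membership. Each summand $x_i^q \, \partial F/\partial x_i$ of $F_{1,0}$ lies in $\mathfrak{m}^{[q]} = (x_0^q, \dots, x_n^q)$, so the identity $F_{1,0} = cF^{\sqrt{q}}$ with $c \neq 0$ gives $F^{\sqrt{q}} \in \mathfrak{m}^{[q]}$. Because $q$ is a perfect square, $\sqrt{q}$ is itself a power of the characteristic $p$, and therefore $\mathfrak{m}^{[q]} = (\mathfrak{m}^{[\sqrt{q}]})^{[\sqrt{q}]}$. Thus $F^{\sqrt{q}} \in (\mathfrak{m}^{[\sqrt{q}]})^{[\sqrt{q}]}$.

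Next I would invoke Kunz's theorem: the Frobenius endomorphism on the regular ring $R$ is flat. A standard consequence, and the key technical input underlying the analysis of Frobenius forms in \cite{KKPSSVW22}, is the colon-capturing property: for any ideal $I \subseteq R$ and any $g \in R$, one has $g \in I$ if and only if $g^{\sqrt{q}} \in I^{[\sqrt{q}]}$. Applying this with $I = \mathfrak{m}^{[\sqrt{q}]}$ and $g = F$ yields $F \in \mathfrak{m}^{[\sqrt{q}]}$. Writing $F = \sum_{i=0}^n x_i^{\sqrt{q}} G_i$ with $G_i \in R$ and using the homogeneity of $F$ together with $\deg F = \sqrt{q}+1$, each $G_i$ may be taken to be its own degree-$1$ component $L_i$. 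This gives the Frobenius-form expression $F = \sum_i x_i^{\sqrt{q}} L_i$.

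The main obstacle is the colon-capturing step: one must know that in a regular ring of characteristic $p$, ordinary ideal membership is detected from Frobenius-bracket ideal membership by raising elements to the $\sqrt{q}$-th power. This is Kunz's theorem in disguise and is precisely the characteristic-$p$ commutative algebra machinery on which the theory of $F$-pure thresholds and Frobenius forms in \cite{KKPSSVW22} rests; once it is accepted as a black box, the remainder of the argument reduces to the elementary manipulations above.
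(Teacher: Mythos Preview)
Your argument is correct and takes a genuinely different, more elementary route than the paper's. The paper works through the $F$-pure threshold: it quotes \cite{KKPSSVW22} for the lower bound $\mathrm{fpt}(F)\geq 1/\sqrt{q}$, uses $F_{1,0}=cF^{\sqrt{q}}$ together with multiplicativity of $\mathrm{fpt}$ under powers and the obvious bound $\mathrm{fpt}(F_{1,0})\leq 1/q$ to force $\mathrm{fpt}(F)=1/\sqrt{q}$, and then invokes a characterization theorem from \cite{KKPSSVW22} saying that a form of degree $\sqrt{q}+1$ with this $\mathrm{fpt}$ must be a Frobenius form. You bypass $F$-pure thresholds entirely: from $F^{\sqrt{q}}\in\mathfrak{m}^{[q]}=(\mathfrak{m}^{[\sqrt{q}]})^{[\sqrt{q}]}$ and the flatness of Frobenius on the regular ring $R$ (or, for this monomial ideal, a one-line check that a monomial $x^{\alpha}$ with all $\alpha_i<\sqrt{q}$ yields $x^{\sqrt{q}\,\alpha}\notin\mathfrak{m}^{[q]}$), you get $F\in\mathfrak{m}^{[\sqrt{q}]}$ directly. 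Two small remarks: the property you invoke is the triviality of Frobenius closure ($I^F=I$) in regular rings rather than what is usually called ``colon capturing''; and your argument makes visible that neither the reducedness of $\{F=0\}$ nor the Frobenius nonclassicality hypothesis is actually needed, since the latter is already implied by $F_{1,0}=cF^{\sqrt{q}}$ with $c\neq 0$.
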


\begin{proof}
Let $\mathrm{fpt}(F)$ and $\mathrm{fpt}(F_{1,0})$ denote the $F$-pure thresholds of $F$ and $F_{1,0}$, respectively. First, we have $\mathrm{fpt}(F)\geq\frac{1}{\sqrt{q}}$ by \cite{KKPSSVW22}*{Theorem~1.1}. Next, the assumption $F_{1,0} = cF^{\sqrt{q}}$ implies that $\mathrm{fpt}(F_{1,0}) = \frac{1}{\sqrt{q}}\mathrm{fpt}(F)$ by \cite{KKPSSVW22}*{Proposition~2.2~(2)}. Moreover, we see that $\mathrm{fpt}(F_{1,0})\leq\frac{1}{q}$ from the definition of $F_{1,0}$. Combining these relations, we obtain
$$
    \frac{1}{\sqrt{q}}
    \leq\mathrm{fpt}(F)
    = \sqrt{q}\cdot\mathrm{fpt}(F_{1,0})
    \leq\frac{1}{\sqrt{q}}
$$
which implies that $\mathrm{fpt}(F) = \frac{1}{\sqrt{q}}$. Thus $F$ is a Frobenius form by \cite{KKPSSVW22}*{Theorem~4.3}.
\end{proof}

Now we proceed to prove that the Frobenius form in Proposition~\ref{prop:F10=powerF} turns out to be Hermitian due to the Frobenius nonclassical property. First note that a Frobenius form over an arbitrary field $k$ of positive characteristic $p$ can be equivalently written as
\begin{equation}
\label{eqn:frobForm}
    F = \sum_{i,j=0}^nx_i^{q'}M_{ij}x_j
    = \mathbf{x}^{q'}\cdot M\cdot\mathbf{x}^t
\end{equation}
where $\mathbf{x} = (x_0,\dots,x_n)$, $\mathbf{x}^{q'} = (x_0^{q'},\dots,x_n^{q'})$, and $M = (M_{ij})$ is a matrix with entries in $k$.

\begin{lemma}
\label{lemma:frobForm_Herm_skewHerm}
Let $F$ be a Frobenius form as in \eqref{eqn:frobForm} over $\bF_{{q'}^2}$. Suppose that the characteristic $p\neq 2$ and that $F$ divides the polynomial
$$
    G\colonequals
    \sum_{i=0}^nx_i^{{q'}^2}\frac{\partial F}{\partial x_i}
    = \mathbf{x}^{q'}\cdot M\cdot (\mathbf{x}^{{q'}^2})^t.
$$
Then, upon rescaling by a constant $c\in\bF_{{q'}^2}$, the matrix $M$ is either
\begin{itemize}
    \item a Hermitian matrix in the sense that $\overline{M}\colonequals (M_{ij}^{q'}) = M^t$, or
    \item a skew-Hermitian matrix in the sense that $\overline{M}\colonequals (M_{ij}^{q'}) = -M^t$.
\end{itemize}
\end{lemma}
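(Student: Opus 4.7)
The plan is to split $M$ into its Hermitian and skew-Hermitian components and test the divisibility $F\mid G$ against each. Since $p\neq 2$, the decomposition $M=A+B$ with $A=\tfrac{1}{2}(M+\overline{M}^t)$ and $B=\tfrac{1}{2}(M-\overline{M}^t)$ is well-defined and unique, with $A$ Hermitian and $B$ skew-Hermitian. Setting $F_A=\mathbf{x}^{q'}A\,\mathbf{x}^t$, $F_B=\mathbf{x}^{q'}B\,\mathbf{x}^t$, and analogously $G_A,G_B$ by replacing the trailing $\mathbf{x}$ with $\mathbf{x}^{{q'}^2}$, linearity gives $F=F_A+F_B$ and $G=G_A+G_B$; the lemma will follow once one shows that $F_A=0$ or $F_B=0$.

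The pivotal computation is the pair of identities $F_A^{q'}=G_A$ and $F_B^{q'}=-G_B$. Indeed, expanding $F_A^{q'}=\sum_{i,j}A_{ij}^{q'}x_i^{{q'}^2}x_j^{q'}$, swapping the dummy indices $i\leftrightarrow j$, and invoking the Hermitian relation $\overline{A_{ji}}=A_{ij}$ rewrites this as $\sum_{i,j}A_{ij}x_i^{q'}x_j^{{q'}^2}=G_A$; the skew-Hermitian case is identical up to the sign coming from $\overline{B_{ji}}=-B_{ij}$. Because $q'$ is a power of $p$, Frobenius distributes over $F=F_A+F_B$, producing $F^{q'}=F_A^{q'}+F_B^{q'}=G_A-G_B$. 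Adding and subtracting $G=G_A+G_B$ then yields $G+F^{q'}=2G_A$ and $G-F^{q'}=2G_B$. The hypothesis $F\mid G$ together with the trivial divisibility $F\mid F^{q'}$ forces $F\mid 2G_A$ and $F\mid 2G_B$, and since $p\neq 2$ we conclude $F\mid F_A^{q'}$ and $F\mid F_B^{q'}$.

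The last step --- and what I expect to be the main obstacle --- is to deduce from these two divisibilities that at least one of $F_A,F_B$ is zero. Every irreducible factor of $F$ over $\overline{k}$ must divide both $F_A$ and $F_B$, and the degree equalities $\deg F_A=\deg F_B=\deg F=q'+1$ should promote this to proportionalities $F_A=\alpha F$ and $F_B=\beta F$ with $\alpha+\beta=1$ (at least once $F$ is taken to be reduced, so that $F$ itself divides each $F_\bullet$). Translating back to matrices, $A=\alpha M$ and $B=\beta M$; the Hermiticity of $A$ and skew-Hermiticity of $B$ then become the coupled equations $\overline{\alpha}\,\overline{M}^t=\alpha M$ and $\overline{\beta}\,\overline{M}^t=-\beta M$. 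Analyzing this system via the norm map $\bF_{{q'}^2}\to\bF_{q'}$ (equivalently Hilbert 90) should pin down $\alpha=0$ or $\beta=0$, i.e.\ $M$ is Hermitian or skew-Hermitian. The delicate point is ruling out the twisted intermediate case where both $\alpha,\beta$ are nonzero: this is where the arithmetic of the quadratic extension $\bF_{{q'}^2}/\bF_{q'}$ and the hypothesis $p\neq 2$ must be leveraged in full.
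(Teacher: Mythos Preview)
Your approach is identical to the paper's: decompose $M=A+B$ into Hermitian and skew-Hermitian parts, establish $F_A^{q'}=G_A$ and $F_B^{q'}=-G_B$, and deduce $F\mid F_A^{q'}$ and $F\mid F_B^{q'}$ from $G\pm F^{q'}=2G_A,\,2G_B$. The paper then passes (as you do, modulo your reducedness caveat) to $F_A=c_1F$, $F_B=c_2F$ and finishes in one line: ``if $c_1\neq0$ then $M=c_1^{-1}M_1$, so $M$ is Hermitian.'' You are right to flag this last step as the delicate one --- in fact the paper's inference is not valid as written, since a scalar multiple of a Hermitian matrix over $\bF_{{q'}^2}$ is Hermitian only when the scalar lies in the fixed field $\bF_{q'}$, and nothing in the argument forces $c_1\in\bF_{q'}$.

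This gap cannot be closed, because the lemma as stated is false. Over $\bF_9$ (so $q'=3$, $p=3$) take $M=\left(\begin{smallmatrix}0&1\\i&0\end{smallmatrix}\right)$ with $i^2=-1$. Then $F=x_0x_1(x_0^2+ix_1^2)$ divides $G=x_0^3x_1^3(x_0^2+ix_1^2)(ix_0^4+x_0^2x_1^2-ix_1^4)$, yet $\overline{M}^t=-iM$ is neither $M$ nor $-M$; here $c_1=i-1\notin\bF_3$. What the argument genuinely yields is $\overline{M}^t=\lambda M$ with $\lambda\overline\lambda=1$, so by Hilbert~90 some scalar multiple $\mu M$ is Hermitian. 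That weaker conclusion is exactly what the subsequent corollary needs (one rescales $F$ there anyway), so your instinct to bring in the norm map and Hilbert~90 was correct --- it just lands on a statement slightly weaker than the one the lemma advertises.
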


\begin{proof}
Because $p\neq 2$, we are allowed to write $M = M_1 + M_2$ where
$$
    M_1 = \frac{1}{2}(M + \overline{M}^t)
    \qquad\text{and}\qquad
    M_2 = \frac{1}{2}(M - \overline{M}^t).
$$
Notice that $M_1$ is Hermitian and $M_2$ is skew-Hermitian. Now we have $F = F_1 + F_2$ where $F_1 = \mathbf{x}^{q'}\cdot M_1\cdot\mathbf{x}^t$ and $F_2 = \mathbf{x}^{q'}\cdot M_2\cdot\mathbf{x}^t$. One can verify directly that $G = F_1^{q'} - F_2^{q'}$. This relation, together with the fact that $F^{q'} = F_1^{q'} + F_2^{q'}$, implies
$$
    G + F^{q'} = 2F_1^{q'}
    \qquad\text{and}\qquad
    G - F^{q'} = -2F_2^{q'}.
$$
Because $F$ divides $G$, it divides the left hand sides of the above two equations, whence it divides both $F_1$ and $F_2$. Thus, there exist $c_1,c_2\in\bF_{q'^2}$ such that $F_1 = c_1F$ and $F_2 = c_2F$, or equivalently, $M_1 = c_1M$ and $M_2 = c_2M$. If $c_1=c_2=0$, then $M=0$ and there is nothing left to prove. If $c_1\neq 0$ (resp. $c_2\neq 0$), then $M = c_1^{-1}M_1$ (resp. $M = c_2^{-1}M_2$) and the requirement is satisfied.
\end{proof}

\begin{cor}
\label{cor:frobForm_Herm}
Let $X=\{F=0\}\subset\bP^n$ be a reduced Frobenius nonclassical hypersurface over $\bF_q$ of degree $\sqrt{q}+1$. Assume that $\operatorname{char}(\bF_q)\neq 2$ and $F_{1,0} = cF^{\sqrt{q}}$ for some nonzero constant~$c$. Then $X$ is Hermitian.
\end{cor}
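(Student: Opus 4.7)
The plan is to combine Proposition~\ref{prop:F10=powerF} with Lemma~\ref{lemma:frobForm_Herm_skewHerm} and then dispatch the skew-Hermitian case via a rescaling trick. First, I would observe that the hypotheses of Corollary~\ref{cor:frobForm_Herm} (reduced, Frobenius nonclassical, degree $\sqrt{q}+1$, and $F_{1,0} = cF^{\sqrt{q}}$ with $c\neq 0$) match exactly those of Proposition~\ref{prop:F10=powerF}. Applying that proposition gives a presentation $F = \mathbf{x}^{\sqrt{q}}\cdot M\cdot \mathbf{x}^t$ in the matrix form~\eqref{eqn:frobForm}, for some square matrix $M$ with entries in $\bF_q$ (here we take $q'=\sqrt{q}$, so the ground field is $k = \bF_{{q'}^2} = \bF_q$).

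Next, the Frobenius nonclassicality of $X$ is precisely the statement that $F$ divides the polynomial $G$ appearing in Lemma~\ref{lemma:frobForm_Herm_skewHerm}. Since $\operatorname{char}(\bF_q) \neq 2$, that lemma applies and yields a dichotomy: $M$ is either Hermitian or skew-Hermitian with respect to the involution $x\mapsto x^{\sqrt{q}}$ on $\bF_q$. If $M$ is Hermitian, then by Example~\ref{eg:hermitian} the polynomial $F$ directly defines a Hermitian hypersurface and $X$ is Hermitian.

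The remaining work is to handle the skew-Hermitian case. The idea is to absorb the sign into a constant rescaling of $F$. Specifically, I would seek $\alpha \in \bF_q^*$ with $\alpha^{\sqrt{q}} = -\alpha$, equivalently $\alpha^{\sqrt{q}-1} = -1$. Once such an $\alpha$ is found, a short computation gives $\overline{\alpha M}^t = \alpha^{\sqrt{q}}\overline{M}^t = (-\alpha)(-M) = \alpha M$, so that $\alpha M$ is Hermitian. Then $\alpha F = \mathbf{x}^{\sqrt{q}}\cdot (\alpha M)\cdot \mathbf{x}^t$ is the defining polynomial of a Hermitian variety, and since $X = \{F=0\} = \{\alpha F = 0\}$, the hypersurface $X$ is Hermitian.

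The only nontrivial point is producing $\alpha$, and this is where the hypothesis $\operatorname{char}(\bF_q)\neq 2$ is used for a second time (beyond its role in Lemma~\ref{lemma:frobForm_Herm_skewHerm}). In odd characteristic, $\sqrt{q}$ is a power of an odd prime and hence odd, so $\sqrt{q}+1$ is even. Writing $\bF_q^* = \langle g\rangle$ with $|g| = q-1 = (\sqrt{q}-1)(\sqrt{q}+1)$, the element $\alpha \colonequals g^{(\sqrt{q}+1)/2}$ satisfies $\alpha^{\sqrt{q}-1} = g^{(q-1)/2} = -1$, as required. I do not anticipate any serious obstacle; the main conceptual step is recognizing that Proposition~\ref{prop:F10=powerF} and Lemma~\ref{lemma:frobForm_Herm_skewHerm} together already pin down $F$ up to the Hermitian/skew-Hermitian dichotomy, which a scalar multiple kills.
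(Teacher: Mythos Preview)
Your proposal is correct and follows essentially the same route as the paper: apply Proposition~\ref{prop:F10=powerF} to get a Frobenius form, invoke Lemma~\ref{lemma:frobForm_Herm_skewHerm} for the Hermitian/skew-Hermitian dichotomy, and in the skew-Hermitian case rescale by a scalar $\alpha\in\bF_q^*$ with $\alpha^{\sqrt{q}}=-\alpha$. The paper simply asserts that such a scalar exists, whereas you supply an explicit construction via a generator of $\bF_q^*$; this is a welcome addition but not a different approach.
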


\begin{proof}
By Proposition~\ref{prop:F10=powerF}, $F$ is a Frobenius form. Applying Lemma~\ref{lemma:frobForm_Herm_skewHerm} with $q' = \sqrt{q}$ and rescaling $F$ by a constant in $\bF_q$ if necessary, we can express $F$ in the form~\eqref{eqn:frobForm} where $M$ is either Hermitian or skew-Hermitian. If $M$ is Hermitian, then the proof is done. Otherwise, we can pick $c\in\bF_q\backslash \{0\}$ that satisfies $c^{\sqrt{q}} = -c$. Then $cM$ is Hermitian. Since $\{cF = 0\}$ defines the same hypersurface $X$, this completes the proof.
\end{proof}

\section{Upper bounds on degree and characterizations}
\label{sect:degree_upper_bound}

This section is devoted to the proof of Theorem~\ref{mainthm:upper-bound}. We will establish the degree bounds and give characterizations step by step, starting from the simplest case $F_{1,0} = 0$, then the case $d\not\equiv 0\pmod{p}$ where $p = \operatorname{char}(\bF_q)$, and eventually to the full generality. The main machinery involved in the proof is the Koszul complex.

\subsection{Hypersurfaces with vanishing \texorpdfstring{$\mathbf{F_{1,0}}$}{F(1,0)}}
\label{subsect:F10=0}

Let $X=\{F=0\}\subset\bP^n$ be a Frobenius nonclassical hypersurface over $\bF_q$ that satisfies the vanishing condition
$$
    F_{1,0} =
    \sum_{i=0}^{n}x_i^q\frac{\partial F}{\partial x_i} = 0.
$$
Now consider the ring $R\colonequals\bF_q[x_0,\dots,x_n]$, the sequence 
$
    \mathbf{x}^q\colonequals (x_0^q,\dots,x_n^q),
$
and the associated Koszul complex
$$\xymatrix@R=0pt{
    K_\bullet(\mathbf{x}^q):
    0\ar[r] &
    \bigwedge^{n+1}R^{n+1}\ar[r] &
    \cdots\ar[r] &
    \bigwedge^2R^{n+1}\ar[r]^-{\delta_2} &
    R^{n+1}\ar[r]^-{\delta_1} &
    R\ar[r] & 0.\\
    &&&& e_i\ar@{|->}[r] & x_i^q &
}$$
Here $\{e_0,\dots,e_n\}$ is the canonical basis for the free $R$-module $R^{n+1} = \bigoplus_{i=0}^n Re_i$. In this setting, the vanishing of $F_{1,0}$ is equivalent to the condition
\begin{equation}
\label{eqn:F10=0_via_Koszul}
    \sum_{i=0}^{n}\frac{\partial F}{\partial x_i}e_i
    \in\ker\delta_1.
\end{equation}
On the other hand, as the sequence $\mathbf{x}^q = (x_0^q,\dots,x_n^q)$ is regular \cite{Mat89}*{Theorem~16.1}, the complex $K_\bullet(\mathbf{x}^q)$ is exact at degree $i$ for all $i\geq 1$ \cite{Mat89}*{Theorem~16.5~(i)}. In particular,
\begin{equation}
\label{eqn:x-seq_exact_1}
    H_1(K_\bullet(\mathbf{x}^q))
    = \ker\delta_1/\operatorname{im}\delta_{2}
    = 0.
\end{equation}

The following lemma is a consequence of these relations.

\begin{lemma}
\label{lemma:F10=0_exactness}
Retain the setting from above. Then
$$
    \frac{\partial F}{\partial x_j}
    = \sum_{i=0}^nx_{i}^qG_{ij},
    \qquad j=0,\dots,n,
$$
for some $G_{ij}\in\bF_q[x_0,\dots,x_n]$ that satisfies $G_{ji} = -G_{ij}$ and $G_{ii}=0$.
\end{lemma}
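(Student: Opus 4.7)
The plan is to exploit directly the Koszul complex set up just before the lemma. By \eqref{eqn:F10=0_via_Koszul}, the element $\eta \colonequals \sum_{i=0}^n (\partial F/\partial x_i)\, e_i$ lies in $\ker \delta_1$, and by the regularity of $(x_0^q, \dots, x_n^q)$ we already have $\ker \delta_1 = \operatorname{im} \delta_2$ from \eqref{eqn:x-seq_exact_1}. Hence one can choose polynomials $G_{ij} \in R$ for $0 \leq i < j \leq n$ such that
$$
    \eta = \delta_2\!\left( \sum_{0 \leq i < j \leq n} G_{ij}\, e_i \wedge e_j \right).
$$

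Next, I would extend the indexing of the $G_{ij}$ to all pairs by decreeing $G_{ji} \colonequals -G_{ij}$ and $G_{ii} \colonequals 0$. Using the standard Koszul formula $\delta_2(e_i \wedge e_j) = x_i^q\, e_j - x_j^q\, e_i$, the coefficient of $e_k$ on the right hand side of the display above becomes
$$
    \sum_{i<k} G_{ik}\, x_i^q \;-\; \sum_{k<j} G_{kj}\, x_j^q
    \;=\; \sum_{i<k} G_{ik}\, x_i^q \;+\; \sum_{k<j} G_{jk}\, x_j^q
    \;=\; \sum_{i=0}^n x_i^q G_{ik},
$$
where the first equality uses antisymmetry and the second uses $G_{kk}=0$. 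Matching this to the coefficient of $e_k$ on the left hand side gives exactly the identity of the lemma (after renaming $k$ back to $j$).

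The argument is essentially bookkeeping: the only substantive input is the regularity of the sequence $(x_0^q, \dots, x_n^q)$, which has already been invoked to obtain \eqref{eqn:x-seq_exact_1}. The only place one has to be careful is with the sign conventions in $\delta_2$ and with the antisymmetric extension of $G_{ij}$, so that the row index in $\sum_i x_i^q G_{ij}$ matches the one appearing in the Koszul differential. I do not expect any genuine obstacle.
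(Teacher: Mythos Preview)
Your proposal is correct and follows essentially the same route as the paper: both arguments use the exactness $\ker\delta_1=\operatorname{im}\delta_2$ from \eqref{eqn:x-seq_exact_1} to lift $\sum_j(\partial F/\partial x_j)e_j$ to an element $\sum_{i<j}G_{ij}\,e_i\wedge e_j$, extend the $G_{ij}$ antisymmetrically with $G_{ii}=0$, and then read off the coefficients of each $e_j$ after applying $\delta_2$. The computations are identical up to notation.
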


\begin{proof}
Relation \eqref{eqn:x-seq_exact_1} implies that the element \eqref{eqn:F10=0_via_Koszul} admits a preimage
$$
    \sum_{i<j}G_{ij}e_i\wedge e_j
    \in\bigwedge^2R^{n+1}
$$
under the differential $\delta_2$. By setting $G_{ij}=-G_{ji}$ for $i > j$ and $G_{ii} = 0$, we get
\begin{align*}
    & \sum_{j=0}^n\left(\frac{\partial F}{\partial x_j}\right)e_j
    = \delta_2\left(\sum_{i<j}G_{ij}e_i\wedge e_j\right)
    = \sum_{i<j}G_{ij}\delta_2(e_i\wedge e_j)
    = \sum_{i<j}G_{ij}(x_i^qe_j - x_j^qe_i) \\
    &= \sum_{i<j}x_i^qG_{ij}e_j - \sum_{j<i}x_i^qG_{ji}e_j
    = \sum_{i<j}x_i^qG_{ij}e_j + \sum_{j<i}x_i^qG_{ij}e_j
    = \sum_{j=0}^n\left(\sum_{i=0}^nx_{i}^qG_{ij}\right)e_{j}.
\end{align*}
Comparing both sides of the equation gives the desired equalities.
\end{proof}

\begin{cor}
\label{cor:F10=0}
Let $X = \{F=0\}\subset\bP^n$ be a Frobenius nonclassical hypersurface of degree~$d$ over $\bF_q$ of characteristic~$p$ which satisfies $F_{1,0} = 0$.
\begin{enumerate}[label=\textup{(\arabic*)}]
    \item\label{F10=0:not-pth-power}
    If $F$ is not a $p$-th power, then $d\geq q+1$. 
    \item\label{F10=0:d-not-0-mod-p}
    If $d\not\equiv 0 \pmod{p}$, then
    $$
        F = \sum_{i,j=0}^nx_{i}^qA_{ij}x_{j}
    $$
    where $A_{ij}$ are polynomials that satisfy $A_{ji} = -A_{ij}$ and $A_{ii}=0$.
\end{enumerate}
\end{cor}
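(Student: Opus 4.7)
The plan is to apply Lemma~\ref{lemma:F10=0_exactness} together with Euler's identity. The lemma furnishes polynomials $G_{ij}$ with $G_{ji} = -G_{ij}$, $G_{ii} = 0$, and
\[
    \frac{\partial F}{\partial x_j} = \sum_{i=0}^n x_i^q G_{ij}.
\]
Because $F$ is homogeneous of degree $d$, so is $\partial F/\partial x_j$ of degree $d-1$, and I may replace each $G_{ij}$ by its homogeneous component of degree $d-1-q$ without affecting either the displayed equation (since only the degree-$(d-1-q)$ piece of $G_{ij}$ contributes to the degree-$(d-1)$ term on the right-hand side) or the skew-symmetry relations (which are preserved componentwise). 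Thus throughout I will assume $G_{ij}$ is homogeneous of degree $d-1-q$.

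For part~\ref{F10=0:not-pth-power}, I argue by contradiction. Suppose $d \leq q$, so that $d - 1 - q < 0$. Every homogeneous polynomial of negative degree is zero, so $G_{ij} = 0$ for all $i, j$. The displayed formula then forces $\partial F/\partial x_j = 0$ for every $j$, which in characteristic $p$ means $F$ is a $p$-th power, contradicting the hypothesis. Hence $d \geq q + 1$.

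For part~\ref{F10=0:d-not-0-mod-p}, I combine Euler's identity with the displayed formula to obtain
\[
    dF = \sum_{j=0}^n x_j \frac{\partial F}{\partial x_j}
    = \sum_{j=0}^n x_j \sum_{i=0}^n x_i^q G_{ij}
    = \sum_{i,j=0}^n x_i^q G_{ij} x_j.
\]
Since $d \not\equiv 0 \pmod{p}$, the scalar $d$ is invertible in $\bF_q$, and setting $A_{ij} \colonequals d^{-1} G_{ij}$ delivers the desired expression. The conditions $A_{ji} = -A_{ij}$ and $A_{ii} = 0$ are inherited directly from the analogous properties of $G_{ij}$.

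Both parts are essentially routine once Lemma~\ref{lemma:F10=0_exactness} is in hand, so I do not anticipate any serious obstacle. The only point demanding attention is the homogeneity reduction, since it is precisely the mechanism that converts the Koszul-theoretic decomposition into the numerical bound $d \geq q+1$.
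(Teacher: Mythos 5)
Your argument is correct and follows essentially the same route as the paper: both parts rest on Lemma~\ref{lemma:F10=0_exactness} combined with a degree count for \ref{F10=0:not-pth-power} and Euler's formula for \ref{F10=0:d-not-0-mod-p}. Your explicit homogeneity reduction is a small bit of extra care that the paper elides when it simply writes $\deg(G_{ij}) = d-1-q$, and your contradiction framing of \ref{F10=0:not-pth-power} is the contrapositive of the paper's direct argument — neither difference is substantive.
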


\begin{proof}
The polynomial $F$ is not a $p$-th power if and only if
$$
    \frac{\partial F}{\partial x_j}
    \neq 0
    \quad\text{for some}\quad
    j\in\{0,\dots,n\}.
$$
This implies that some $G_{ij}$ in Lemma~\ref{lemma:F10=0_exactness} is not the zero polynomial. Therefore, $\deg(G_{ij}) = d-1-q \geq 0$. Thus $d\geq q+1$. This proves \ref{F10=0:not-pth-power}. If $d\not\equiv 0\ (\bmod\ p)$, then Euler's formula and Lemma~\ref{lemma:F10=0_exactness} imply that
$$
    F = d^{-1}\sum_{j=0}^n\frac{\partial F}{\partial x_j}x_j
    = d^{-1}\sum_{i,j=0}^nx_{i}^qG_{ij}x_j.
$$
This proves \ref{F10=0:d-not-0-mod-p} by setting $A_{ij}\colonequals d^{-1}G_{ij}$.
\end{proof}

\subsection{Restrictions imposed by smoothness}
\label{subset:restriction_smoothness}

Let $X = \{F = 0\}\subset\bP^n$ be a hypersurface over $\bF_q$ of degree~$d\geq 2$. Consider $R = \bF_q[x_0,\dots,x_n]$ and $R^{n+1} = \bigoplus_{i=0}^nRe_i$ as before. Then the Jacobian ideal
$$
    J_F = \left(
        \frac{\partial F}{\partial x_0},\dots,
        \frac{\partial F}{\partial x_n}
    \right)\subset R
$$
is a proper ideal, and it defines a Koszul complex
$$\xymatrix@R=0pt{
    K_\bullet(J_F):
    0\ar[r] &
    \bigwedge^{n+1}R^{n+1}\ar[r]^-{\delta_{n+1}}&
    \cdots\ar[r]^-{\delta_3} &
    \bigwedge^2R^{n+1}\ar[r]^-{\delta_2} &
    R^{n+1}\ar[r]^-{\delta_1} &
    R\ar[r] & 0. \\
    &&&& e_i\ar@{|->}[r] & \frac{\partial F}{\partial x_i} &
}$$
In this setting, the length of a maximal regular sequence in $J_F$, that is, the \emph{depth} of $J_F$, can be computed by \cite{Mat89}*{Theorem~16.8}
\begin{equation}
\label{eqn:depth-in-homology}
    \operatorname{depth}(J_F)
    = n+1-\max\{
        i \mid H_i(K_\bullet(J_F))
        = \ker\delta_i/\operatorname{im}\delta_{i+1}
        \neq 0
    \}.
\end{equation}
On the other hand, $R$ is a polynomial ring over a field and thus is Cohen--Macaulay \cite{Eis95}*{Proposition~18.9}. This implies that \cite{Eis05}*{Theorem~A2.38}
\begin{equation}
\label{eqn:depth=codim}
    \operatorname{depth}(J_F)
    = \operatorname{codim}(J_F)
\end{equation}
where $\operatorname{codim}(J_F)$ is the Krull codimension of the scheme $\{J_F=0\}\subset\bP^n$.

\begin{lemma}
\label{lemma:smooth_p-nmid-d}
Let $X = \{F = 0\}\subset\bP^n$ be a smooth Frobenius nonclassical hypersurface of degree~$d\geq 2$ over $\bF_q$ of characteristic $p$ such that $d\not\equiv 0 \pmod{p}$. Then
$$
    x_j^q - d^{-1}\left(\frac{F_{1,0}}{F}\right)x_j
    = \sum_{i=0}^n\frac{\partial F}{\partial x_i}\beta_{ij}
    \quad\text{for all}\quad
    j = 0,\dots,n,
$$
where $\beta_{ij}\in\bF_q[x_0,\dots,x_n]$ satisfy $\beta_{ji} = -\beta_{ij}$ and $\beta_{ii}=0$. In particular, we have $d\leq q+1$.
\end{lemma}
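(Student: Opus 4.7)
The strategy is a Koszul-homology argument in the spirit of Lemma~\ref{lemma:F10=0_exactness}, but now using the Jacobian sequence $J_F$ instead of $\mathbf{x}^q$. Since $X$ is smooth, the Jacobian ideal $J_F$ defines the empty scheme in $\bP^n$, so the zero locus of $J_F$ in the affine cone is just the origin; hence $\operatorname{codim}(J_F) = n+1$. By \eqref{eqn:depth=codim} we obtain $\operatorname{depth}(J_F) = n+1$, and then \eqref{eqn:depth-in-homology} forces $H_i(K_\bullet(J_F)) = 0$ for every $i\geq 1$. This exactness at degree $1$ will be the main tool.

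The polynomial $P \colonequals F_{1,0}/(dF)$ is a genuine homogeneous polynomial of degree $q-1$: Frobenius nonclassicality gives $F\mid F_{1,0}$, and $d\neq 0$ in $\bF_q$ by hypothesis. With $P$ in hand, I consider the element
\[
  v \colonequals \sum_{j=0}^{n}\left(x_j^q - Px_j\right)e_j \in R^{n+1}.
\]
A direct computation using Euler's identity $\sum_j x_j\,\partial F/\partial x_j = dF$ shows
\[
  \delta_1(v)
  = \sum_{j=0}^{n}\left(x_j^q - Px_j\right)\frac{\partial F}{\partial x_j}
  = F_{1,0} - P\cdot dF
  = 0,
\]
so $v \in \ker \delta_1$. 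Vanishing of $H_1(K_\bullet(J_F))$ then provides a preimage $w = \sum_{i<j}\beta_{ij}\,e_i\wedge e_j \in \bigwedge^2 R^{n+1}$ with $\delta_2(w) = v$. Extending $\beta_{ji} \colonequals -\beta_{ij}$ and $\beta_{ii} \colonequals 0$, the identical bookkeeping as in the proof of Lemma~\ref{lemma:F10=0_exactness} transforms the equation $\delta_2(w)=v$ coefficient-by-coefficient into the desired formula
\[
  x_j^q - Px_j \;=\; \sum_{i=0}^{n}\frac{\partial F}{\partial x_i}\beta_{ij},
  \qquad j = 0,\dots,n.
\]

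For the bound $d \leq q+1$, the key point is that $v$ is nonzero: if $x_0^q = Px_0$, then $P = x_0^{q-1}$, but then $x_1^q - x_0^{q-1}x_1 \neq 0$, contradicting $P = x_1^{q-1}$ as well. Thus at least one $\beta_{ij}$ is nonzero, so its degree is non-negative. Comparing degrees in the displayed formula (LHS homogeneous of degree $q$, RHS of degree $(d-1) + \deg\beta_{ij}$) forces $\deg\beta_{ij} = q - d + 1 \geq 0$, i.e.\ $d \leq q+1$. I don't anticipate any serious obstacle; the only mildly delicate point is simply verifying that $F_{1,0}/(dF)$ is a polynomial of the correct degree and that the skew-symmetrization of the $\beta_{ij}$ produced by $\delta_2$ reproduces the stated equations, both of which are routine.
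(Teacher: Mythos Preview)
Your proposal is correct and follows essentially the same approach as the paper's own proof: exactness of the Koszul complex $K_\bullet(J_F)$ at degree~$1$ (via $\operatorname{codim}(J_F)=n+1$), the observation that $\sum_j(x_j^q-\alpha x_j)e_j\in\ker\delta_1$ by Euler's identity, and the degree comparison for the bound $d\leq q+1$. The only point worth tightening is that ``$X$ smooth $\Rightarrow \{J_F=0\}=\varnothing$ in $\bP^n$'' uses the hypothesis $p\nmid d$ through Euler's formula (otherwise the partial derivatives could vanish off $X$); the paper makes this explicit, and you should too.
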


\begin{proof}
The assumption $d\not\equiv 0 \pmod{p}$ and Euler's formula imply that the singular locus of $X$ coincides with $\{J_F = 0\}\subset\bP^n$. Hence $\operatorname{codim}(J_F) = n+1$ since $X$ is smooth. Using \eqref{eqn:depth-in-homology} and \eqref{eqn:depth=codim}, we conclude that
$$
    \max\{
        i \mid H_i(K_\bullet(J_F))
        = \ker\delta_i/\operatorname{im}\delta_{i+1}
        \neq 0
    \} = 0.
$$
In particular,
\begin{equation}
\label{eqn:smooth-implies-H1=0}
    H_1(K_\bullet(J_F))
    = \ker\delta_1/\operatorname{im}\delta_{2}
    = 0.
\end{equation}

Define $\alpha\colonequals d^{-1}(F_{1,0}/F) \in \bF_q[x_0,\dots,x_n]$. This is well-defined as $d\not\equiv 0 \pmod{p}$ and $X$ is Frobenius nonclassical. Rearranging the equation gives $F_{1,0} - \alpha dF = 0$, which can be expanded via Euler's formula as
$$
    \sum_{j=0}^n(x_j^q - \alpha x_j)
    \frac{\partial F}{\partial x_j} = 0.
$$
This equation shows that
$$
    \sum_{j=0}^n(x_j^q-\alpha x_j)e_j
    \in\ker\delta_1.
$$
By \eqref{eqn:smooth-implies-H1=0}, this element admits a preimage $\sum_{i<j}\beta_{ij}e_i\wedge e_j\in\bigwedge^2 R^{n+1}$ under $\delta_2$. By setting $\beta_{ij}=-\beta_{ji}$ for $i > j$ and $\beta_{ii} = 0$, we obtain
\begin{align*}
    &\sum_{j=0}^n(x_j^q - \alpha x_j)e_j
    = \delta_2(\sum_{i<j}\beta_{ij}e_i\wedge e_j)
    = \sum_{i<j}\beta_{ij}\delta_2(e_i\wedge e_j)
    = \sum_{i<j}\beta_{ij}\left(
        \frac{\partial F}{\partial x_i}e_j - \frac{\partial F}{\partial x_j}e_i
    \right) \\
    &= \sum_{i<j}\beta_{ij}\frac{\partial F}{\partial x_i}e_j
        - \sum_{j<i}\beta_{ji}\frac{\partial F}{\partial x_i}e_j
    = \sum_{i<j}\beta_{ij}\frac{\partial F}{\partial x_i}e_j
        + \sum_{j<i}\beta_{ij}\frac{\partial F}{\partial x_i}e_j
    = \sum_{j=0}^n\left(
            \sum_{i=0}^n\beta_{ij}\frac{\partial F}{\partial x_i}
        \right)e_j.
\end{align*}
Comparing both sides of the equation gives the desired relations.

To prove that $d\leq q+1$, first note that $x_j^q - x_j\alpha = 0$ implies that $\alpha = x_j^{q-1}$, which cannot hold for all $0\leq j\leq n$. Hence there exists $j$ such that $x_j^q - x_j\alpha \neq 0$. Therefore,
$$
    q = \deg(x_j^q - \alpha x_j)
    = \deg\left(\sum_{i=0}^n\frac{\partial F}{\partial x_i}\beta_{ij}\right)
    = d - 1 + \deg(\beta_{ij}).
$$
This shows that $q\geq d-1$, or equivalently, $d\leq q+1$.
\end{proof}

\begin{cor}
\label{cor:smooth_p-nmid-d_F10=0}
Let $X = \{F = 0\}\subset\bP^n$ be a smooth Frobenius nonclassical hypersurface of degree $d\geq 2$ over $\bF_q$ of characteristic $p$ such that $d\not\equiv 0 \pmod{p}$. Assume additionally that $F_{1,0}=0$. Then there is a nondegenerate skew-symmetric matrix $(A_{ij})$ with entries in $\bF_q$ and zeros along the diagonal such that
$$
    F = \sum_{i,j=0}^nx_{i}^qA_{ij}x_{j}.
$$
In particular, this situation occurs only when $n$ is odd.
\end{cor}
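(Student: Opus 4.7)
The plan is to combine Corollary~\ref{cor:F10=0}~\ref{F10=0:d-not-0-mod-p} with Lemma~\ref{lemma:smooth_p-nmid-d} to pin down the shape of $F$, and then use smoothness to promote the coefficients into a nondegenerate scalar matrix. To determine the degree exactly, note that the hypothesis $d \not\equiv 0 \pmod p$ prevents $F$ from being a $p$-th power, so Corollary~\ref{cor:F10=0}~\ref{F10=0:not-pth-power} (applied with $F_{1,0}=0$) gives $d \geq q+1$, while Lemma~\ref{lemma:smooth_p-nmid-d} gives the reverse inequality $d \leq q+1$. Hence $d = q+1$. Plugging this back into Corollary~\ref{cor:F10=0}~\ref{F10=0:d-not-0-mod-p}, the coefficients $A_{ij}$ have degree $d-q-1=0$ and therefore lie in $\bF_q$, while inheriting the alternating conditions $A_{ji} = -A_{ij}$ and $A_{ii}=0$.

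Next I would verify nondegeneracy of the scalar matrix $(A_{ij})$ by contradiction. Since $q = p^m$, partial derivatives kill the factors $x_i^q$ and simplify to
$$
    \frac{\partial F}{\partial x_k} = \sum_{i=0}^{n} A_{ik}\, x_i^q.
$$
If $(A_{ij})$ were singular over $\overline{\bF_q}$, pick a nonzero vector $(c_0, \ldots, c_n)$ with $\sum_i c_i A_{ik}=0$ for every $k$. As $\overline{\bF_q}$ is perfect, the elements $a_i \colonequals c_i^{1/q}$ are well-defined, and the point $[a_0 : \cdots : a_n]$ is nonzero and satisfies
$$
    \frac{\partial F}{\partial x_k}(a_0, \ldots, a_n)
    = \sum_i A_{ik}\, a_i^q
    = \sum_i A_{ik}\, c_i
    = 0
$$
for every $k$. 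This produces a singular point of $X$, contradicting smoothness.

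Finally, to conclude that $n$ must be odd, I would appeal to the Pfaffian: every alternating matrix of odd size has vanishing determinant in arbitrary characteristic, so the $(n+1) \times (n+1)$ alternating matrix $(A_{ij})$ can be nondegenerate only when $n+1$ is even. I do not foresee any serious obstacle, as the key machinery is already in place from the preceding corollary and lemma; the only point requiring some care is the case $p=2$, where the weaker notion of skew-symmetry would not suffice and one genuinely needs the alternating condition (including zeros on the diagonal) to rule out nondegenerate odd-dimensional examples via the Pfaffian.
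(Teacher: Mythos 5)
Your proof is correct and follows essentially the same route as the paper: you invoke Corollary~\ref{cor:F10=0}~\ref{F10=0:not-pth-power} together with Lemma~\ref{lemma:smooth_p-nmid-d} to pin down $d = q+1$, then Corollary~\ref{cor:F10=0}~\ref{F10=0:d-not-0-mod-p} to get the alternating form with scalar coefficients, followed by the smoothness and parity arguments. You usefully spell out the details the paper leaves terse (the explicit partial-derivative computation showing a null vector of $(A_{ij})$ yields a singular point, and the Pfaffian argument valid in all characteristics including $p=2$), but the underlying strategy is the same.
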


\begin{proof}
Corollary~\ref{cor:F10=0}~\ref{F10=0:not-pth-power} and Lemma~\ref{lemma:smooth_p-nmid-d} imply that $d = q+1$. Corollary~\ref{cor:F10=0}~\ref{F10=0:d-not-0-mod-p} then implies that $F = \sum_{i,j=0}^nx_{i}^qA_{ij}x_{j}$, where $(A_{ij})$ is a skew-symmetric matrix with entries in $\bF_q$ and zeros along the diagonal. The smoothness of $X$ implies that $(A_{ij})$ is nondegenerate since, if not, then there exists a point $[a_0:\cdots:a_n]\in\bP^n(\bF_q)$ such that $\sum_{i=0}^na_iA_{ij} = 0$ for all~$j$, and a straightforward computation shows that this point is a singular point of $X$. In particular, the size of $(A_{ij})$ is even \cite{Lan02}*{XV, Theorem~8.1}. Thus $n$ is odd.
(Note that the matrix $(A_{ij})$ having zeros along the diagonal is essential to ensure that its size is even in the case when $p=2$.)
\end{proof}

\subsection{Characterization in the case of degree \texorpdfstring{$\mathbf{q+1}$}{q+1}}
\label{subsect:degree_q+1}

Lemma~\ref{lemma:smooth_p-nmid-d} can be further refined for the case when degree $d = q+1$. This refinement provides a characterization for smooth Frobenius nonclassical hypersurfaces over $\bF_q$ with this specific degree. This will be stated and proved in Proposition~\ref{prop:smooth_d=q+1} after the next lemma. 

\begin{lemma}
\label{lemma:x^q-ax}
Let $\alpha\in\bF_q[x_0,\dots,x_n]$ be a homogeneous polynomial that is either constantly zero or nonzero of degree $q-1$. Consider the $\bF_q$-vector space
$$
    V = \mathrm{span}\{
        x_j^q-\alpha x_j
        \mid j = 0,\dots,n
    \}\subset\bF_q[x_0,\dots,x_n]
$$
which is of dimension at most $n+1$. Under the situation that $\dim(V)\leq n$, it can only happen that $\dim(V) = n$ and, in this case, there exists a system of coordinates $\{y_0,\dots,y_n\}$ such that $\alpha = y_0^{q-1}$ and that $\{y_j^q-y_0^{q-1}y_j\mid j=1,\dots,n\}$ forms a basis for $V$.
\end{lemma}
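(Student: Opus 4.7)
The plan is to analyze $\dim(V)$ by studying the space of $\bF_q$-linear relations
$$
    \sum_{j=0}^n c_j\bigl(x_j^q - \alpha x_j\bigr) = 0,
    \qquad c_j \in \bF_q.
$$
Since $c_j^q = c_j$ for every $c_j \in \bF_q$ and the $q$-th power map is additive in characteristic $p$, such a relation rewrites as $L^q = \alpha L$, where $L \colonequals \sum_j c_j x_j$. If $L = 0$ then the relation is trivial, and if $L \neq 0$ then $\alpha = L^{q-1}$. In particular, whenever $\alpha = 0$ we have $\dim(V) = n+1$, so the interesting case $\dim(V) \leq n$ forces $\alpha \neq 0$ of degree $q-1$.

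The next step is to bound the space of such relations. Suppose $L_1, L_2$ are two nonzero linear forms with $L_1^{q-1} = L_2^{q-1} = \alpha$. Since $\bF_q[x_0,\dots,x_n]$ is a UFD and each $L_i$ is linear (hence prime), the divisibility $L_1 \mid L_2^{q-1}$ forces $L_1 \mid L_2$, so $L_1 = c\, L_2$ for some $c \in \bF_q$; then $c^{q-1} = 1$ shows $c \in \bF_q^\ast$. Hence the space of relations has dimension at most one, which proves $\dim(V) \geq n$ and forces equality when $\dim(V) \leq n$.

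Finally, to produce the claimed coordinate system, pick any $L\neq 0$ with $L^{q-1} = \alpha$, choose $y_0 \colonequals L$, and extend to a system of $\bF_q$-linear coordinates $\{y_0,\dots,y_n\}$. Because the coefficients of the change of basis lie in $\bF_q$ and are fixed by Frobenius, a direct expansion shows $y_i^q - \alpha y_i = \sum_j a_{ij}(x_j^q - \alpha x_j)$, so that $V$ is equally the span of $\{y_i^q - \alpha y_i\}_{i=0}^n$. By construction $y_0^q - \alpha y_0 = L^q - L^{q-1}\cdot L = 0$, so $V$ is spanned by the $n$ elements $\{y_j^q - y_0^{q-1}y_j \mid 1 \leq j \leq n\}$, which must then be a basis since $\dim(V) = n$.

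The only subtle point is the uniqueness argument in the second paragraph: one must rule out the possibility that two non-proportional linear forms have the same $(q-1)$-st power. This is where the UFD property of the polynomial ring, together with the primality of a linear form, does the essential work; once this is in place, the dimension bound and the explicit coordinate construction are both immediate.
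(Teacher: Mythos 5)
Your proof is correct, and the overall structure (nontrivial relation $\Rightarrow$ $L^q = \alpha L$ $\Rightarrow$ $\alpha = L^{q-1}$, then change coordinates with $y_0 = L$ and show the span is preserved since the transition matrix has entries fixed by Frobenius) matches the paper's. The one place you genuinely diverge is in establishing $\dim(V) \geq n$: you bound the space of relations by showing that any two nonzero linear forms with $L_1^{q-1} = L_2^{q-1}$ must be $\bF_q$-proportional via the UFD/primality argument, which makes the relation space at most one-dimensional. The paper instead observes directly that the $n$ candidate basis elements $y_j^q - y_0^{q-1}y_j$ ($j=1,\dots,n$) are linearly independent, appealing to the fact that they cut out the $\bF_q$-points off the hyperplane $\{y_0=0\}$. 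Your route has the advantage of isolating a clean uniqueness statement about $L$ (which is the conceptual heart of why $\dim(V)$ drops by at most one), whereas the paper's geometric observation gets to independence more quickly but requires the reader to accept a slightly indirect implication; an even more elementary variant would note that each $y_j^q$ appears in exactly one of the proposed basis elements, so no nontrivial $\bF_q$-linear combination can cancel the degree-$q$ pure powers.
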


\begin{proof}
The condition $\dim(V)\leq n$ means the polynomials $x_j^q - \alpha x_j$, $j = 0,\dots,n$, are linearly dependent, so there exists
$
    (c_0,\dots,c_n)\in \bF_q^{n+1}\setminus\{0\}
$
such that
$$
    \sum_{j=0}^nc_j(x_j^q-\alpha x_j)
    = \sum_{j=0}^nc_jx_j^q - \alpha\sum_{j=0}^nc_jx_j
    = 0.
$$
Since $c_j=c_j^q$ for all $j$, rearranging the above equation gives
$$
    \alpha\sum_{j=0}^nc_jx_j
    = \sum_{j=0}^nc_jx_j^q
    = \sum_{j=0}^nc_j^qx_j^q
    = \left(\sum_{j=0}^nc_jx_j\right)^q.
$$
Eliminating common factors from both sides gives
$$
    \alpha = \left(\sum_{j=0}^nc_jx_j\right)^{q-1}.
$$
As $(c_0,\dots,c_n)$ is a nonzero vector, we can set $y_0=\sum_{j=0}^nc_jx_j$ and complete it to a coordinate system $\{y_0,\dots,y_n\}$. Notice that $\alpha = y_0^{q-1}$ in this setting.

The polynomials $y_i^q - y_0^{q-1}y_i$, $1\leq i\leq n$, cut out the set of $\bF_q$-points away from the hyperplane $\{y_0 = 0\}$, which implies that they are linearly independent over $\bF_q$. To finish the proof, it is sufficient to show that they belong to $V$. Suppose that the transformation between the coordinate systems $\{x_0,\dots,x_n\}$ and $\{y_0,\dots,y_n\}$ is given by
$$
    y_i=\sum_{j=0}^ng_{ij}x_j
    \quad\text{where}\quad
    (g_{ij})\in\mathrm{GL}_{n+1}(\bF_q).
$$
Then $y_i^q=\sum_{j=0}^ng_{ij}x_j^q$. Hence, for $i = 1,\dots,n$,
$$
    y_i^q - \alpha y_i
    = \sum_{j=0}^ng_{ij}x_j^q - \alpha\sum_{j=0}^ng_{ij}x_j
    = \sum_{j=0}^ng_{ij}(x_j^q - \alpha x_j)\in V.
$$
This completes the proof.
\end{proof}

\begin{prop}
\label{prop:smooth_d=q+1}
Let $X = \{F=0\}\subset\bP^n$ be a smooth Frobenius nonclassical hypersurface over $\bF_q$ of degree $d = q+1$ and let $p = \operatorname{char}(\bF_q)$.
\begin{enumerate}[label=\textup{(\arabic*)}]
    \item\label{smooth_d=q+1_n-odd}
    If $n$ is odd, then there exists a nondegenerate skew-symmetric matrix $(A_{ij})$ with entries in $\bF_q$ and zeros along the diagonal such that
    $$
        F = \sum_{i,j=0}^nx_i^qA_{ij}x_j
    $$
    and vice versa. Notice that $F_{1,0}=0$ in this case.
    \item\label{smooth_d=q+1_n-even}
    If $n$ is even, then $p = 2$ and there exists a system of coordinates $\{y_0,\dots,y_n\}$ and a nondegenerate skew-symmetric matrix $(B_{ij})_{1\leq i, j\leq n}$ with entries in $\bF_q$ and zeros along the diagonal such that
    $$
        F_{1,0} = y_0^{q-1}F,
        \qquad
        F = y_0\frac{\partial F}{\partial y_0}
        + \sum_{i,j=1}^ny_i^qB_{ij}y_j,
        \qquad
        \frac{\partial^2 F}{\partial y_0^2} = 0,
    $$
    and vice versa.
\end{enumerate}
\end{prop}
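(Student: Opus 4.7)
The plan is to apply Lemma~\ref{lemma:smooth_p-nmid-d} to the present setting, which is permissible because $d=q+1\equiv 1\pmod p$. A degree count forces the coefficients $\beta_{ij}$ in the resulting identity to be constants in $\bF_q$, so one obtains the matrix identity
\[
\mathbf{x}^q - \alpha\,\mathbf{x} = -B\,\nabla F,
\]
where $B=(\beta_{ij})$ is skew-symmetric over $\bF_q$ with zero diagonal and $\alpha\colonequals F_{1,0}/F$ is homogeneous of degree $q-1$. The key observation will be that for any $\mathbf{v}\in\ker B\cap\bF_q^{n+1}$ with $L_{\mathbf v}\colonequals\sum v_ix_i\neq 0$, contracting the identity with $\mathbf{v}^t$ yields $\alpha L_{\mathbf v}=L_{\mathbf v}^q$, hence $\alpha=L_{\mathbf v}^{q-1}$. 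Consequently, if $\alpha\neq 0$ then $L_{\mathbf v}$ is determined up to an $\bF_q^\times$-scalar, so $\dim_{\bF_q}\ker B\leq 1$ and $\mathrm{rank}(B)\geq n$; since a skew-symmetric matrix has even rank, $n$ must be even whenever $\alpha\neq 0$.

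For part~(1) with $n$ odd, this dichotomy forces $\alpha=0$, so $F_{1,0}=0$, and Corollary~\ref{cor:smooth_p-nmid-d_F10=0} directly yields the asserted form. The converse is a direct verification: for $F=\sum x_i^q A_{ij}x_j$ with $A$ nondegenerate skew with zero diagonal, the relation $p\mid q$ gives $\partial_kF=\sum_ix_i^qA_{ik}$, and then $F_{1,0}=\sum_{i,k}x_i^qx_k^qA_{ik}=0$ follows from skew-symmetry; smoothness is equivalent to the nondegeneracy of $A$.

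For part~(2) with $n$ even, the dichotomy forces $\alpha\neq 0$ and $\alpha=L^{q-1}$ for some nonzero linear $L$ over $\bF_q$. After a coordinate change taking $L$ to $y_0$, the kernel of $B$ becomes $\langle\mathbf{e}_0\rangle$ and $B=\operatorname{diag}(0,B')$ with $B'$ nondegenerate skew of size~$n$. The next step is to invert the restricted identity $\mathbf{y}'^q-y_0^{q-1}\mathbf{y}'=-B'\nabla'F$ and substitute into Euler's identity $(q+1)F=F$ (using $\mathbf{y}'^t(B')^{-1}\mathbf{y}'=0$ by skew-symmetry), which produces
\[
F = y_0\,\partial_{y_0}F + \sum_{i,j=1}^n y_i^q B_{ij}\,y_j,\qquad B_{ij}\colonequals\bigl((B')^{-1}\bigr)_{ij}.
\]
Differentiating this expression in $y_0$ and cancelling a factor of $y_0$ immediately gives $\partial_{y_0}^2F=0$.

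The main obstacle will be deriving $p=2$. Setting $G\colonequals\partial_{y_0}F$ and equating the two expressions for $\partial_iF$ (one from the displayed formula, the other from the inverted Lemma identity) should yield $\partial_iG = y_0^{q-2}\sum_j B_{ij}y_j$ for $i\geq 1$. The commutativity of mixed partials will then force $B_{ik}=B_{ki}=-B_{ik}$, i.e., $2B_{ik}=0$; since $B$ is nondegenerate, this compels $p=2$. For the converse direction in part~(2), a direct computation from the prescribed form verifies $F_{1,0}=y_0^{q-1}F$ (the cross-term $\sum y_i^q y_j^q B_{ij}$ vanishes by skew-symmetry of $B$), while smoothness is checked by tracking the vanishing locus of $\nabla F$ and using the nondegeneracy of $B'$.
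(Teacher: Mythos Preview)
Your plan for part~(2) is sound and close to the paper's (your derivation of $p=2$ via commuting mixed partials of $G$ is a pleasant variant of the paper's $\partial_{y_0}$-differentiation of $G_{1,0}=y_0^{q-2}H$). But your dichotomy argument has a genuine gap that breaks part~(1).

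From $\dim_{\bF_q}\ker B\le 1$ you correctly get $\mathrm{rank}(B)\ge n$, but the assertion ``since a skew-symmetric matrix has even rank, $n$ must be even whenever $\alpha\neq 0$'' does not follow: when $n$ is odd, $\mathrm{rank}(B)=n+1$ is even and $\ge n$, so nothing is violated. Your kernel argument alone does \emph{not} exclude the scenario $n$ odd, $B$ invertible, $\alpha\neq 0$, so you cannot jump to $F_{1,0}=0$ and invoke Corollary~\ref{cor:smooth_p-nmid-d_F10=0}. (Dually, your part~(2) claim ``$n$ even forces $\alpha\neq 0$'' is true, but not by your stated dichotomy; rather, $\alpha=0$ would give $L_{\mathbf v}^q=0$ for every $\mathbf v\in\ker B$, hence $\ker B=0$ and $\mathrm{rank}(B)=n+1$ odd.)

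The fix for $n$ odd is to abandon the attempt to prove $\alpha=0$ first. Your argument (plus the easy case $\alpha=0\Rightarrow\ker B=0$) does show $\mathrm{rank}(B)=n+1$, so $A\colonequals B^{-1}$ exists and is again alternating. Inverting and applying Euler exactly as you do in part~(2) gives
\[
F=\sum_i x_i\,\partial_i F=-\sum_{i,j}x_iA_{ij}(x_j^q-\alpha x_j)=-\sum_{i,j}x_iA_{ij}x_j^q+\alpha\cdot\underbrace{\mathbf{x}^tA\mathbf{x}}_{=0},
\]
yielding the asserted form directly; $F_{1,0}=0$ then \emph{follows} from the form rather than preceding it. This is precisely the paper's route. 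Note that the implication ``$B$ invertible $\Rightarrow\alpha=0$'' encoded in this computation is exactly the missing ingredient your dichotomy would need, so invoking Corollary~\ref{cor:smooth_p-nmid-d_F10=0} saves no work.
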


\begin{proof}
Define $\alpha\colonequals F_{1,0}/F$. Lemma~\ref{lemma:smooth_p-nmid-d} together with the assumption $d = q+1$ implies that there exist $\beta_{ij}\in\bF_q$ satisfying $\beta_{ij} = -\beta_{ji}$ and $\beta_{ii} = 0$ such that
\begin{equation}
\label{eqn:x^q-ax_in_partialF}
    x_j^q - \alpha x_j
    = \sum_{i=0}^n\frac{\partial F}{\partial x_i}\beta_{ij},
    \quad j = 0,\dots,n.
\end{equation}
Note that $\frac{\partial F}{\partial x_i}$, $i=0,\dots,n$, are linearly independent over $\bF_q$. Indeed, if not, then these polynomials cut out a nonempty subset in $\bP^n$, which lies on $X$ due to Euler's formula and the assumption $d\not\equiv 0 \pmod{p}$. But this contradicts the hypothesis that $X$ is smooth. On the other hand, the $\bF_q$-vector subspace
$$
    V = \mathrm{span}\{
        x_j^q-\alpha x_j
        \mid j = 0,\dots,n
    \}\subset\bF_q[x_0,\dots,x_n]
$$
has dimension either $n$ or $n+1$ by Lemma~\ref{lemma:x^q-ax}. These properties with \eqref{eqn:x^q-ax_in_partialF} imply that $\mathrm{rank}(\beta_{ij}) = \dim(V)$ which equals either $n$ or $n+1$. Because the rank of a skew-symmetric matrix is always even, we have
$$
    \mathrm{rank}(\beta_{ij})
    = \begin{cases}
        n+1 & \text{if }n\text{ is odd},\\
        n & \text{if }n\text{ is even}.
    \end{cases}
$$

\emph{Assume that $n$ is odd.} Then the matrix $(\beta_{ij})$ is nondegenerate and thus invertible. Define
$$
    (A_{ij})
    \colonequals ((\beta_{ij})^{-1})^t
    = -(\beta_{ij})^{-1}.
$$
Note that $A_{ij} = -A_{ji}$ and $A_{ii} = 0$. From \eqref{eqn:x^q-ax_in_partialF}, we get
$$
    \frac{\partial F}{\partial x_i}
    = \sum_{j=0}^nA_{ij}(x_j^q-\alpha x_j),
    \quad i=0,\dots,n.
$$
Euler's formula then gives
$$
    F = \sum_{i=0}^nx_i\frac{\partial F}{\partial x_i}
    = \sum_{i,j=0}^nx_iA_{ij}(x_j^q-\alpha x_j)
    = \sum_{i,j=0}^nx_iA_{ij}x_j^q
    - \alpha\sum_{i,j=0}^nx_iA_{ij}x_j
    = \sum_{i,j=0}^nx_iA_{ij}x_j^q.
$$
This proves one implication of \ref{smooth_d=q+1_n-odd}. The converse follows from the fact that, as explained in the proof of Corollary~\ref{cor:smooth_p-nmid-d_F10=0}, the matrix $(A_{ij})$ being nondegenerate implies that $n$ is odd. The vanishing $F_{1,0} = 0$ can be verified directly from the formula of $F$.

\emph{Assume that $n$ is even.} The fact that $\dim(V) = n$ and Lemma~\ref{lemma:x^q-ax} imply that there exists a system of coordinates $\{y_0,\dots,y_n\}$ such that $\alpha = y_0^{q-1}$. In particular, the first equation in \ref{smooth_d=q+1_n-even} holds. Recall that the property of being Frobenius nonclassical is invariant under a change of coordinates over $\bF_q$. Therefore, we can apply Lemma~\ref{lemma:smooth_p-nmid-d} in the new coordinates, which asserts that there exist $\eta_{ij}\in\bF_q$ satisfying $\eta_{ij} = -\eta_{ji}$ and $\eta_{ii} = 0$ such that
\begin{equation}
\label{eqn:y^q-ay_in_partialF}
    y_j^q - y_0^{q-1}y_j
    = \sum_{i=0}^n\frac{\partial F}{\partial y_i}\eta_{ij},
    \quad j = 0,\dots,n.
\end{equation}
For $j = 0$, the above relation reduces to $0 = \sum_{i=0}^n\frac{\partial F}{\partial y_i}\eta_{i0}$. As $\frac{\partial F}{\partial y_i}$, $i=0,\dots,n$, are linearly independent over $\bF_q$, we conclude that $\eta_{i0} = -\eta_{0i} = 0$ for $i = 0,\dots,n$. It follows that the minor $(\eta_{ij})_{1\leq i,j\leq n}$ has full rank $n$ and thus is invertible. Define
$$
    (B_{ij})_{1\leq i,j\leq n}
    \colonequals((\eta_{ij})_{1\leq i,j\leq n}^{-1})^t
    = -(\eta_{ij})_{1\leq i,j\leq n}^{-1}.
$$
Then $B_{ij} = -B_{ji}$ and $B_{ii} = 0$. Collecting the relations in \eqref{eqn:y^q-ay_in_partialF} for $j=1,\dots,n$, we get
$$
    \frac{\partial F}{\partial y_i}
    = \sum_{j=1}^nB_{ij}(y_j^q-y_0^{q-1}y_j),
    \quad i=1,\dots,n.
$$
Applying Euler's formula, we get
\begin{equation}
\label{eqn:F-y0partialF}
\begin{aligned}
    F - y_0\frac{\partial F}{\partial y_0}
    = \sum_{i=1}^ny_i\frac{\partial F}{\partial y_i}
    &= \sum_{i,j=1}^ny_iB_{ij}(y_j^q-y_0^{q-1}y_j) \\
    &= \sum_{i,j=1}^ny_iB_{ij}y_j^q
    - y_0^{q-1}\sum_{i,j=1}^ny_iB_{ij}y_j
    = \sum_{i,j=1}^ny_iB_{ij}y_j^q.
\end{aligned}
\end{equation}
This proves the second equation in \ref{smooth_d=q+1_n-even}. Applying $\frac{\partial}{\partial y_0}$ to both sides of \eqref{eqn:F-y0partialF} gives
\begin{equation}
\label{eqn:y0-partial-order-2}
    y_0\frac{\partial^2 F}{\partial y_0^2} = 0,
    \qquad\text{whence}\qquad
    \frac{\partial^2 F}{\partial y_0^2} = 0.
\end{equation}
This proves the third equation in \ref{smooth_d=q+1_n-even}.

Now we prove that $p = 2$. For the sake of simplicity, we denote
$$
    G\colonequals\frac{\partial F}{\partial y_0}
    \qquad\text{and}\qquad
    H\colonequals\sum_{i,j=1}^ny_iB_{ij}y_j^q.
$$
Notice that $H_{1,0} = 0$. Rewrite \eqref{eqn:F-y0partialF} as $F = y_0G + H$. Then a straightforward computation gives
$$
    F_{1,0}
    = y_0^qG + y_0G_{1,0} + H_{1,0}
    = y_0^qG + y_0G_{1,0}.
$$
It follows that
$$
    y_0^qG + y_0G_{1,0}
    = F_{1,0}
    = y_0^{q-1}F
    = y_0^{q-1}(y_0G + H)
    = y_0^qG + y_0^{q-1}H.
$$
Rearranging the terms and eliminating common factors to get
\begin{equation}
\label{eqn:G10_H}
    G_{1,0} = y_0^{q-2}H.
\end{equation}
Recall from \eqref{eqn:y0-partial-order-2} that $\frac{\partial G}{\partial y_0} = 0$. Applying $\frac{\partial}{\partial y_0}$ to \eqref{eqn:G10_H}, the left hand side gives
$$
    \frac{\partial G_{1,0}}{\partial y_0}
    = \frac{\partial }{\partial y_0}\left(
        \sum_{i=0}^ny_i^q\frac{\partial G}{\partial y_i}
    \right)
    = \sum_{i=0}^ny_i^q\frac{\partial^2 G}{\partial y_0\partial y_i}
    = \sum_{i=0}^ny_i^q\frac{\partial^2 G}{\partial y_i\partial y_0}
    = 0,
$$
while the right hand side with the fact that $\frac{\partial H}{\partial y_0} = 0$ gives
$$
    \frac{\partial}{\partial y_0}(y_0^{q-2}H)
    = -2y_0^{q-3}H + y_0^{q-2}\frac{\partial H}{\partial y_0}
    = -2y_0^{q-3}H.
$$
Hence $0 = -2y_0^{q-3}H$. We have $H\neq 0$ since $(B_{ij})_{1\leq i,j\leq n}$ is nondegenerate, so $p=2$. This completes the proof of one implication in \ref{smooth_d=q+1_n-even}.

The converse of \ref{smooth_d=q+1_n-even} holds because the skew-symmetric matrix $(B_{ij})$ with zeros along the diagonal is nondegenerate only when its size $n$ is even.
\end{proof}

\subsection{The upper bound \texorpdfstring{$\mathbf{d\leq q+2}$}{d<=q+2} and further characterization}
\label{subsect:degree_q+2}

Let us proceed to study smooth Frobenius nonclassical hypersurfaces $X=\{F=0\}\subset\bP^n$ of degree~$d\geq 2$ over~$\bF_q$ without the assumption that $d$ is not divisible by $p = \operatorname{char}(\bF_q)$. To deal with this general case, we work with the quotient ring $\overline{R}\colonequals\bF_q[x_0,\dots,x_n]/(F)$ and the ideal
$$
    \overline{J_F}\colonequals\left(
        \frac{\partial F}{\partial x_0},\dots,
        \frac{\partial F}{\partial x_n}
    \right)
    \subset\overline{R}.
$$
Note that $\overline{J_F}\neq\overline{R}$ due to $d\geq 2$. This ideal determines a Koszul complex
$$\xymatrix@R=0pt{
    K_\bullet(\overline{J_F}):
    0\ar[r] &
    \bigwedge^{(n+1)}\overline{R}^{n+1}\ar[r]^-{\delta_{n+1}}&
    \cdots\ar[r]^-{\delta_3} &
    \bigwedge^2\overline{R}^{n+1}\ar[r]^-{\delta_2} &
    \overline{R}^{n+1}\ar[r]^-{\delta_1} &
    \overline{R}\ar[r] & 0.\\
    &&&& e_i\ar@{|->}[r] & \frac{\partial F}{\partial x_i}
}$$
The fact that $X\subset\bP^n$ is a hypersurface implies that $\overline{R}$ is Cohen--Macaulay \cite{Eis95}*{Proposition~18.13}. By \cite{Eis05}*{Theorem~A2.38} and \cite{Mat89}*{Theorem~16.8},
$$
    \operatorname{codim}(\overline{J_F})
    = \operatorname{depth}(\overline{J_F})
    = n+1-\max\left\{
        i \,\middle\vert\, H_i(K_\bullet(\overline{J_F}))
        = \ker\delta_i/\operatorname{im}\delta_{i+1}
        \neq 0
    \right\}.
$$
The smoothness of $X$ implies that $\operatorname{codim}(\overline{J_F})=n$, so the above relation reduces to
$$
    \max\left\{
        i \,\middle\vert\, H_i(K_\bullet(\overline{J_F}))
        = \ker\delta_i/\operatorname{im}\delta_{i+1}
        \neq 0
    \right\}
    = 1.
$$
In particular,
\begin{equation}
\label{eqn:H2=0}
    H_2(K_\bullet(\overline{J_F}))
    = \ker\delta_2/\operatorname{Im}\delta_3
    = 0.
\end{equation}

\begin{lemma}
\label{lemma:vanishingH2}
Let $X=\{F=0\}\subset\bP^n$ be a smooth Frobenius nonclassical hypersurface of degree~$d\geq 2$ over $\bF_q$. Then $d\leq q+2$ and there exists
$$
    \gamma = \sum_{i<j<k}\gamma_{ijk}e_i\wedge e_j\wedge e_k
    \in\bigwedge^3\overline{R}^{n+1}
$$
such that
$$
    x_ix_j^q-x_i^qx_j
    = \sum_{k=0}^n\gamma_{ijk}\frac{\partial F}{\partial x_k}\pmod{F},
    \qquad 0\leq i<j\leq n.
$$
\end{lemma}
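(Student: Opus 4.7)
The plan is to exhibit an explicit cycle in $\bigwedge^2\overline{R}^{n+1}$ whose coefficients are exactly the polynomials $x_ix_j^q-x_i^qx_j$, invoke the vanishing \eqref{eqn:H2=0} of $H_2(K_\bullet(\overline{J_F}))$ to lift it to an element $\gamma\in\bigwedge^3\overline{R}^{n+1}$, and then read off the degree bound $d\leq q+2$ from a degree count on $\gamma$.

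Consider the two vectors
$$
a\colonequals\sum_{i=0}^nx_ie_i,
\qquad
b\colonequals\sum_{i=0}^nx_i^qe_i
\quad\in\overline{R}^{n+1}.
$$
By Euler's identity, $\delta_1(a)=\sum_ix_i(\partial F/\partial x_i)=dF$, which vanishes in $\overline{R}$. By the Frobenius nonclassical hypothesis, $\delta_1(b)=F_{1,0}$, which also vanishes in $\overline{R}$. Now form the cycle
$$
\omega\colonequals a\wedge b
=\sum_{i<j}(x_ix_j^q-x_i^qx_j)\,e_i\wedge e_j
\quad\in\bigwedge^2\overline{R}^{n+1}.
$$
A direct computation using $\delta_2(e_i\wedge e_j)=(\partial F/\partial x_i)e_j-(\partial F/\partial x_j)e_i$ shows that the coefficient of $e_k$ in $\delta_2(\omega)$ equals $b_k\,\delta_1(a)-a_k\,\delta_1(b)$, and hence $\delta_2(\omega)=0$ in $\overline{R}^{n+1}$. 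By \eqref{eqn:H2=0}, $\ker\delta_2=\operatorname{im}\delta_3$, so there exists $\gamma=\sum_{i<j<k}\gamma_{ijk}e_i\wedge e_j\wedge e_k$ with $\delta_3(\gamma)=\omega$. Expanding $\delta_3(\gamma)$ and matching the coefficient of $e_i\wedge e_j$ for each $i<j$, with $\gamma_{ijk}$ extended antisymmetrically in its indices (and set to zero when two indices coincide), yields the desired identity
$$
x_ix_j^q-x_i^qx_j
=\sum_{k=0}^n\gamma_{ijk}\frac{\partial F}{\partial x_k}\pmod{F}.
$$

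For the degree bound, we grade $\overline{R}^{n+1}$ by declaring $\deg(e_i)=d-1$, so that the Koszul differentials preserve degree. Then $\omega$ is homogeneous of degree $(q+1)+2(d-1)$, so we may take $\gamma$ homogeneous of the same degree, which forces $\deg(\gamma_{ijk})=q+2-d$. If $d>q+2$, then this degree is negative and every $\gamma_{ijk}$ must vanish, giving $x_ix_j^q-x_i^qx_j\equiv0\pmod F$ for all $i<j$. Since $\deg F=d>q+1=\deg(x_ix_j^q-x_i^qx_j)$, this would force $x_ix_j^q-x_i^qx_j=0$ in $R$, which is absurd. Therefore $d\leq q+2$.

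The main potential obstacle is bookkeeping: verifying that $\delta_2(\omega)=0$ and expanding $\delta_3(\gamma)$ cleanly require keeping track of signs and antisymmetric conventions in the Koszul complex. Conceptually, however, the argument is short once one notices that the pair $(a,b)$ furnishes two explicit cycles in $\ker\delta_1$ whose wedge automatically lies in $\ker\delta_2$, so that the vanishing of $H_2$ established above via Cohen--Macaulayness does all the work.
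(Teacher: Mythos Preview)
Your proof is correct and follows essentially the same route as the paper: form $\omega=a\wedge b$ from the Euler and Frobenius syzygies, use the vanishing of $H_2(K_\bullet(\overline{J_F}))$ to lift it to $\gamma$, and read off the degree bound. Your degree argument via grading the complex (taking $\gamma$ homogeneous so that $\deg\gamma_{ijk}=q+2-d$) is a slightly cleaner packaging of the paper's degree comparison, but the substance is identical.
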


In the last equation above, we apply the convention that
\begin{equation}
\label{eqn:conven_gamma}
    \gamma_{\sigma(i)\sigma(j)\sigma(k)}
    = \mathrm{sgn}(\sigma)\gamma_{ijk}
\end{equation}
for any permutation $\sigma$ on $i,j,k$. In particular, $\gamma_{ijk} = 0$ if any two of $i,j,k$ are identical. We will continue to use this convention in the remaining part of this section.

\begin{proof}
Let us consider the two elements in $\overline{R}^{n+1}$
$$
    \mathbf{x}\colonequals\sum_{i=0}^nx_ie_i
    \qquad\text{and}\qquad
    \mathbf{x}^{\,q}\colonequals\sum_{i=0}^nx_i^qe_i.
$$
Euler's formula implies that $\mathbf{x}\in\ker\delta_1$. On the other hand, $X$ is Frobenius nonclassical and so $\mathbf{x}^{\,q}\in\ker\delta_1$. These facts imply that
$$
    \delta_2(\mathbf{x}\wedge\mathbf{x}^{\,q})
    = \left(\sum_{i=0}^nx_i\frac{\partial F}{\partial x_i}\right)\mathbf{x}^{\,q}
    - \left(\sum_{i=0}^nx_i^q\frac{\partial F}{\partial x_i}\right)\mathbf{x}
    = \delta_1(\mathbf{x})\mathbf{x}^{\,q}
    - \delta_1(\mathbf{x}^{\,q})\mathbf{x}
    = 0.
$$
That is, $\beta\colonequals\mathbf{x}\wedge\mathbf{x}^{\,q}\in\ker\delta_2$. By \eqref{eqn:H2=0}, there exists
$$
    \gamma = \sum_{i<j<k}\gamma_{ijk}e_i\wedge e_j\wedge e_k
    \in\bigwedge^3\overline{R}^{n+1}
$$
such that $\beta = \delta_3(\gamma)$.

For the sake of simplicity, we denote $F_i\colonequals\frac{\partial F}{\partial x_i}$ for $i=0,\dots,n$. Expanding both sides of the equation $\beta = \delta_3(\gamma)$ gives
\begin{align*}
    \beta = \sum_{i<j}(x_ix_j^q-x_i^qx_j)e_i\wedge e_j
    &= \sum_{i<j<k}\gamma_{ijk}\delta_3(e_i\wedge e_j\wedge e_k)\\
    &= \sum_{i<j<k}\gamma_{ijk}\left(
        F_ie_j\wedge e_k
        - F_je_i\wedge e_k
        + F_ke_i\wedge e_j
    \right)\\
    &= \sum_{i<j<k}\gamma_{ijk}F_ie_j\wedge e_k
        - \sum_{i<j<k}\gamma_{ijk}F_je_i\wedge e_k
        + \sum_{i<j<k}\gamma_{ijk}F_ke_i\wedge e_j\\
    &= \sum_{k<i<j}\gamma_{kij}F_ke_i\wedge e_j
        - \sum_{i<k<j}\gamma_{ikj}F_ke_i\wedge e_j
        + \sum_{i<j<k}\gamma_{ijk}F_ke_i\wedge e_j\\
    &= \sum_{k<i<j}\gamma_{ijk}F_ke_i\wedge e_j
        + \sum_{i<k<j}\gamma_{ijk}F_ke_i\wedge e_j
        + \sum_{i<j<k}\gamma_{ijk}F_ke_i\wedge e_j\\
    &= \sum_{i<j}\left(
            \sum_{k=0}^n\gamma_{ijk}F_k
        \right)
        e_i\wedge e_j.
\end{align*}
Comparing both sides of the equality gives the desired relations
\begin{equation}
\label{eqn:alpha_in_beta}
    x_ix_j^q-x_i^qx_j
    = \sum_{k=0}^n\gamma_{ijk}F_k\pmod{F},
    \qquad 0\leq i<j\leq n.
\end{equation}

Let us prove that $d\leq q+2$. Assume, to the contrary, that $d>q+2$. Then the polynomial on the left hand side of equation~\eqref{eqn:alpha_in_beta} has degree $q+1 < d$. This implies that the polynomial on the right hand side has degree $q+1$ as well, and the equation holds without modulo $F$. The assumption $d>q+2$ also implies that $\deg(F_k) = d-1 > q+1$. But this implies that the right hand side of \eqref{eqn:alpha_in_beta} has degree strictly greater than $q+1$, which contradicts to our previous conclusion. Therefore, we must have $d\leq q+2$.
\end{proof}

\begin{prop}
\label{prop:smooth_d=q+2}
Let $X=\{F=0\}\subset\bP^n$, where $n\geq 2$, be a smooth Frobenius nonclassical hypersurface of degree $d$ over $\bF_q$ of characteristic $p$ such that $d = q+2$. Then $p = n = 2$ and, upon rescaling $F$ by a nonzero constant, we have
$$
    \frac{\partial F}{\partial x_0} = x_1x_2^q - x_1^qx_2,
    \qquad
    \frac{\partial F}{\partial x_1} = x_2x_0^q - x_2^qx_0,
    \qquad
    \frac{\partial F}{\partial x_2} = x_0x_1^q - x_0^qx_1.
$$
In particular, we have $F_{1,0}=0$ and
$$
    F = x_0x_1x_2(x_0^{q-1} + x_1^{q-1} + x_2^{q-1}) + G(x_0^2,x_1^2,x_2^2)
$$
for some polynomial $G$.
\end{prop}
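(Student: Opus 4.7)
The plan is to exploit Lemma~\ref{lemma:vanishingH2} in the critical regime $d = q+2$. Write $F_k \colonequals \partial F/\partial x_k$ and $w_{ij} \colonequals x_i x_j^q - x_i^q x_j$ for $i<j$. Since $\deg(w_{ij}) = q+1 = \deg(F_k)$ and both are strictly less than $\deg F = q+2$, the congruences from Lemma~\ref{lemma:vanishingH2} actually hold as genuine polynomial identities, and the coefficients $\gamma_{ijk}$ can be chosen as constants in $\bF_q$ by matching homogeneous degrees.

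The $\binom{n+1}{2}$ polynomials $w_{ij}$ are manifestly linearly independent over $\bF_q$, being supported on disjoint monomial pairs of the form $\{x_ax_b^q\}$. Each sits in the $\bF_q$-span of $F_0,\dots,F_n$, so $\binom{n+1}{2} \leq n+1$, which forces $n \leq 2$ and hence $n = 2$ in view of the hypothesis $n \geq 2$. For $n = 2$, the module $\bigwedge^3 \overline{R}^3$ is generated by the single element $e_0 \wedge e_1 \wedge e_2$, so the relations depend on a single scalar $\gamma \in \bF_q$:
$$
    w_{01} = \gamma F_2, \qquad w_{02} = -\gamma F_1, \qquad w_{12} = \gamma F_0.
$$
Since $w_{01}, w_{02}, w_{12}$ are linearly independent, $\gamma \neq 0$, and replacing $F$ by $\gamma F$ normalizes this to the explicit forms of $F_0, F_1, F_2$ stated in the proposition.

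Two elementary computations now settle the characteristic and the vanishing of $F_{1,0}$. Expanding $F_{1,0} = \sum_k x_k^q F_k$ with the explicit partials, the six resulting monomials cancel in pairs, yielding $F_{1,0} = 0$. Expanding $\sum_k x_k F_k$ in the same fashion also gives $0$; Euler's formula then forces $(q+2) F = 0$, so $p \mid q+2$, and since $q$ is a power of $p$ the only possibility is $p = 2$.

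It remains to reconstruct $F$ in characteristic~$2$. Setting $F_\ast \colonequals x_0 x_1 x_2(x_0^{q-1} + x_1^{q-1} + x_2^{q-1}) = x_0^q x_1 x_2 + x_0 x_1^q x_2 + x_0 x_1 x_2^q$, the fact that $q$ is even makes the leading $q x_0^{q-1} x_1 x_2$ term vanish, and one reads off $\partial F_\ast/\partial x_k = F_k$ for $k = 0, 1, 2$. Thus $F - F_\ast$ has all partials equal to zero; in characteristic~$2$, a homogeneous polynomial with this property is necessarily of the form $G(x_0^2, x_1^2, x_2^2)$, completing the decomposition. The only genuinely substantive step is the dimension count $\binom{n+1}{2} \leq n+1$ that collapses $n$ to $2$; everything else is linear algebra and direct verification.
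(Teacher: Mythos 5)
Your proof is correct, and it covers every claim in the proposition: $n=2$, the explicit partials up to scaling, $F_{1,0}=0$, $p=2$, and the final decomposition. The core structure — invoking Lemma~\ref{lemma:vanishingH2}, upgrading the congruences to genuine polynomial identities with constant $\gamma_{ijk}$, and the dimension count $\binom{n+1}{2}\leq n+1$ — matches the paper's argument. The one genuine deviation is your route to $p=2$. The paper establishes $p=2$ \emph{first} and independently, by observing that if $p\neq 2$ then $d = q+2 \not\equiv 0\pmod{p}$, so Lemma~\ref{lemma:smooth_p-nmid-d} forces $d\leq q+1$, a contradiction. You instead establish $n=2$ and the explicit forms of $F_0, F_1, F_2$ first (neither of which needs $p=2$), then observe that $\sum_k x_k F_k = 0$ by direct cancellation, so Euler's formula yields $(q+2)F = 0$; hence $p \mid q+2$, and since $q = p^e$ this forces $p\mid 2$. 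Your approach is more self-contained — it does not reach back to Lemma~\ref{lemma:smooth_p-nmid-d} and instead extracts the characteristic constraint from the Koszul-derived identities themselves — at the small cost of having to carry the Euler computation. Both are valid; yours has the pedagogical advantage that the characteristic restriction is seen to be \emph{forced} by the explicit shape of the Jacobian rather than imported from an earlier bound.

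One minor stylistic note: when you say $\gamma\neq 0$ because the $w_{ij}$ are linearly independent, the simpler observation suffices that $w_{01}\neq 0$ already forces $\gamma F_2\neq 0$. Also, the phrase ``the fact that $q$ is even makes the leading $q x_0^{q-1}x_1x_2$ term vanish'' is fine, but the cleaner reason is simply that $p\mid q$, which holds for any prime power regardless of parity; the vanishing of that term needs only $q\equiv 0\pmod{p}$, not that $q$ is even per se (of course, at that point you have already shown $p=2$, so both readings are true).
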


\begin{proof}
First of all, if $p\neq 2$, then $d = q+2\equiv 2\not\equiv 0\ (\bmod\ p)$. But this forces $d\leq q+1$ by Lemma~\ref{lemma:smooth_p-nmid-d}, a contradiction. Hence $p = 2$.

Let us show that $n = 2$. By hypothesis, $\deg(F) = d = q+2 > q+1$. This implies that the relations from Lemma~\ref{lemma:vanishingH2}:
\begin{equation}
\label{eqn:x_ix_j^q-x_i^qx_j_in_partialF}
    x_ix_j^q-x_i^qx_j
    = \sum_{k=0}^n\gamma_{ijk}\frac{\partial F}{\partial x_k}
\end{equation}
hold without modulo $F$ and that $\gamma_{ijk}\in\bF_q$. The $n(n+1)/2$ polynomials $x_ix_j^q-x_i^qx_j$ for $0\leq i<j\leq n$ are linearly independent over $\bF_q$. On the other hand, the polynomials $\frac{\partial F}{\partial x_k}$, $0\leq k\leq n$, span an $\bF_q$-vector subspace of dimension at most $n+1$ in $\bF_q[x_0,\dots,x_n]$. These two facts, together with \eqref{eqn:x_ix_j^q-x_i^qx_j_in_partialF}, imply that
$$
    \frac{n(n+1)}{2} \leq n+1,
    \qquad\text{or equivalently,}\qquad
    n\leq 2.
$$
Since $n\geq 2$ by hypothesis, we must have $n = 2$.

By convention~\eqref{eqn:conven_gamma}, $\gamma_{ijk}=0$ if any two of $i,j,k$ coincide. Hence, \eqref{eqn:x_ix_j^q-x_i^qx_j_in_partialF} under the condition that $n=2$ provides three nontrivial relations
$$
    \gamma_{120}\cdot\frac{\partial F}{\partial x_0}
    = x_1x_2^q - x_1^qx_2,
    \quad
    \gamma_{201}\cdot\frac{\partial F}{\partial x_1}
    = x_2x_0^q - x_2^qx_0,
    \quad
    \gamma_{012}\cdot\frac{\partial F}{\partial x_2}
    = x_0x_1^q - x_0^qx_1.
$$
By the same convention, $\gamma_{120}=\gamma_{201}=\gamma_{012}$, so we can denote these elements simultaneously as $c$. Note that $c\neq 0$. Replacing $F$ by $cF$ then gives the desired expressions for the partial derivatives. The vanishing of $F_{1,0}$ and the formula for $F$ are straightforward computations using these expressions.
\end{proof}

\begin{cor}
\label{cor:d=q+2_no-Fq-pt}
Let $X=\{F=0\}\subset\bP^n$, where $n\geq 2$, be a smooth Frobenius nonclassical hypersurface over $\bF_q$ of degree $d = q+2$. Then $X$ contains no $\bF_q$-point.
\end{cor}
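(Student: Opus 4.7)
The proof proposal is a direct consequence of the explicit formulas for the partial derivatives of $F$ established in Proposition~\ref{prop:smooth_d=q+2}. The plan is to evaluate these partials at any hypothetical $\bF_q$-point and observe that they all vanish, contradicting smoothness.

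More precisely, by Proposition~\ref{prop:smooth_d=q+2}, we have $p = n = 2$ and, after rescaling $F$,
$$
    \frac{\partial F}{\partial x_0} = x_1x_2^q - x_1^qx_2,
    \quad
    \frac{\partial F}{\partial x_1} = x_2x_0^q - x_2^qx_0,
    \quad
    \frac{\partial F}{\partial x_2} = x_0x_1^q - x_0^qx_1.
$$
Suppose, to the contrary, that $P = [a_0 : a_1 : a_2] \in X(\bF_q)$. Since each $a_i \in \bF_q$ satisfies $a_i^q = a_i$, every one of the displayed binomials vanishes when evaluated at $P$, e.g.\ $a_1 a_2^q - a_1^q a_2 = a_1 a_2 - a_1 a_2 = 0$. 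Thus all three partial derivatives of $F$ vanish at $P$, which would make $P$ a singular point of $X$, contradicting the smoothness hypothesis.

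There is essentially no obstacle: the work has already been done in Proposition~\ref{prop:smooth_d=q+2}, which pins down the partial derivatives of $F$ explicitly. The corollary is just the observation that these particular binomials vanish identically on $\bP^2(\bF_q)$. I would write the proof as a two-line argument, without any auxiliary lemmas.
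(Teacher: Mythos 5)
Your proof is correct and takes exactly the same route as the paper: invoke Proposition~\ref{prop:smooth_d=q+2} to get the explicit formulas for the partial derivatives, note that each binomial $x_ix_j^q - x_i^qx_j$ vanishes at any $\bF_q$-point, and conclude by contradiction with smoothness. You have simply written out the evaluation step that the paper leaves implicit.
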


\begin{proof}
By Proposition~\ref{prop:smooth_d=q+2}, the partial derivatives of $F$ vanish at every $\bF_q$-point. If $X$ contains any $\bF_q$-point, then the point has to be singular, which cannot happen as we assume $X$ to be smooth.
\end{proof}

\subsection{Sharpness of the upper bounds}
\label{subsect:proof_upper_bounds}

Example~\ref{eg:d=q+1_n-odd} shows that the upper bound in Lemma~\ref{lemma:smooth_p-nmid-d} is sharp. In the following, we exhibit more examples of smooth Frobenius nonclassical hypersurfaces whose degrees attain the upper bounds in Lemmas~\ref{lemma:smooth_p-nmid-d} and \ref{lemma:vanishingH2}. Then we finish the proof of Theorem~\ref{mainthm:upper-bound} at the end of this section.

\begin{eg}
\label{eg:d=q+1_n-even}
Over $\bF_4 = \bF_2(a)$, where $a^2+a+1=0$, the plane curve $X\subset\bP^2$ defined by
$$
    F = x\left(
        (a+1)x^4 + x^2y^2 + x^2yz + x^2z^2 + y^4 + y^2z^2 + z^4
    \right)
    + y^4z + yz^4
$$
provides an example for Proposition~\ref{prop:smooth_d=q+1}~\ref{smooth_d=q+1_n-even} as well as Theorem~\ref{mainthm:upper-bound}~\ref{main:d=q+1_n-even}. The Hessian matrix of this example equals
$$
\left(
    \frac{\partial^2 F}{\partial x_i\partial x_j}
\right)
= \begin{pmatrix}
    0 & x^2z & x^2y \\
    x^2z & 0 & x^3 \\
    x^2y & x^3 & 0
\end{pmatrix}
$$
which has a zero determinant. In general, the determinant of the Hessian matrix of a smooth Frobenius nonclassical hypersurface $X$ vanishes along $X$ by \cite{ADL21}*{Lemma~4.6}. But the Hessian matrix itself may not vanish along $X$ due to this example.
\end{eg}

\begin{eg}
\label{eg:d=q+2}
Over $\bF_2$, the plane curve $X\subset\bP^2$ defined by
$$
    (x+y+z)xyz+x^2y^2+x^2z^2+y^2z^2+x^4+y^4+z^4 = 0
$$
provides an example for Proposition~\ref{prop:smooth_d=q+2} as well as Theorem~\ref{mainthm:upper-bound}~\ref{main:d=q+2}. This is the Dickson--Guralnick--Zieve curve for $q=2$ \cite{GKT19}. As another example, we have the plane curve over $\bF_4$ defined by
$$
    (x^3+y^3+z^3)xyz + x^2 y^2(x^2+y^2) + y^2 z^2(y^2+z^2)+ (a+1)x^6 + y^6+ az^6 = 0
$$
where $a$ is a multiplicative generator of $\bF_4^{\ast}$ that satisfies $a^2+a+1=0$. Finally, let $a$ be a multiplicative generator of $\bF_8^{\ast}$ satisfying $a^3+a+1=0$. Then the plane curve defined by
\begin{align*}
    &(x^7+y^7+z^7)xyz + a^{2} x^{10}+x^{6} y^{4}+x^{4} y^{6}+(a+1) y^{10}+(a^{2}+1) x^{8} z^{2}+x^{4} y^{4} z^{2} \\ & + a y^{8} z^{2} +x^{6} z^{4}+x^{2}
      y^{4} z^{4}+y^{6} z^{4}+x^{4} z^{6}+y^{4} z^{6}+(a^{2}+1) x^{2} z^{8}+a y^{2} z^{8}+a z^{10} = 0
\end{align*}
provides an example over $\bF_8$.
\end{eg}

\begin{proof}[Proof of Theorem~\ref{mainthm:upper-bound}]
The bound $d\leq q+2$ is a consequence of Lemma~\ref{lemma:vanishingH2}. The ``only if'' parts of \ref{main:d=q+1_n-odd} and \ref{main:d=q+1_n-even} follow respectively from Proposition~\ref{prop:smooth_d=q+1}~\ref{smooth_d=q+1_n-odd} and \ref{smooth_d=q+1_n-even}. The ``only if'' part of \ref{main:d=q+2} follows from Proposition~\ref{prop:smooth_d=q+2}. For the converse of \ref{main:d=q+1_n-odd}, we have $d=q+1$ directly from the formula. The integer $n$ is odd because the matrix $(A_{ij})$, which has size~$n+1$, is skew-symmetric with zeros along its diagonal and is nondegenerate. The arguments for the converses of \ref{main:d=q+1_n-even} and \ref{main:d=q+2} are similar.

In \ref{main:d=q+1_n-odd} and \ref{main:d=q+2}, straightforward computations give $F_{1,0} = 0$. Conversely, let us assume $F_{1,0} = 0$. Then Corollary~\ref{cor:F10=0}~\ref{F10=0:not-pth-power} and the fact that $d\leq q+2$ imply $d=q+1$ or $d=q+2$. If $d=q+1$, then we are in case~\ref{main:d=q+1_n-odd} by Corollary~\ref{cor:smooth_p-nmid-d_F10=0}. If $d=q+2$, then case~\ref{main:d=q+2} occurs by Proposition~\ref{prop:smooth_d=q+2}.
\end{proof}

\section{Frobenius nonclassical hypersurfaces with separated variables}
\label{sect:sep-var}

In this section, we prove that a smooth Frobenius nonclassical hypersurface over $\bF_q$ with separated variables has degree $1\pmod{p}$ where $p = \operatorname{char}(\bF_q)$. Let us first show that examples in Theorem~\ref{mainthm:upper-bound}~\ref{main:d=q+2} do \emph{not} have separated variables.

\begin{lemma}
\label{lemma:d=q+2_not_sep_var}
Smooth Frobenius nonclassical hypersurfaces over $\bF_q$ of degree $d = q+2$ do not have separated variables.
\end{lemma}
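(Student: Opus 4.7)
The plan is to combine the very restrictive structure of $F$ forced by Proposition~\ref{prop:smooth_d=q+2} with a direct chain-rule computation to produce a contradiction.

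Assume $X=\{F=0\}\subset\bP^n$ is a smooth Frobenius nonclassical hypersurface of degree $q+2$. Proposition~\ref{prop:smooth_d=q+2} gives $p=n=2$ and, after rescaling,
\begin{equation*}
    F_0 = x_1 x_2^q + x_1^q x_2,\qquad F_1 = x_2 x_0^q + x_2^q x_0,\qquad F_2 = x_0 x_1^q + x_0^q x_1,
\end{equation*}
where $F_i=\partial F/\partial x_i$. Suppose, for contradiction, that $F$ has separated variables. Since $n=2$, the definition yields an $M\in\operatorname{GL}_3(\bF_q)$ such that, writing $\mathbf{y}=M^{-1}\mathbf{x}$, the polynomial $F$ splits either as $G(y_0)+H(y_1,y_2)$ (case $m=0$) or as $G(y_0,y_1)+H(y_2)$ (case $m=1$). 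In either case, after relabeling indices if necessary, there is a singleton variable $y_k$ with the property that $\partial F/\partial y_k$ depends only on $y_k$. Since $F$ is homogeneous of degree $q+2$, this forces
\begin{equation*}
    \frac{\partial F}{\partial y_k} \;=\; c\cdot y_k^{q+1}
\end{equation*}
for some $c\in\bF_q$.

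The next step is to compare this identity in the original $x$-coordinates. By the chain rule,
\begin{equation*}
    \frac{\partial F}{\partial y_k} \;=\; \sum_{i=0}^2 M_{ik}\,F_i,
\end{equation*}
which is an $\bF_q$-linear combination of $F_0,F_1,F_2$. The crucial observation is that each $F_i$ is supported only on mixed monomials of the shape $x_j x_\ell^q$ with $j\neq\ell$, so this combination contains no $x_i^{q+1}$ term. Writing the $k$-th row of $M^{-1}$ as $(\alpha,\beta,\gamma)$ so that $y_k=\alpha x_0+\beta x_1+\gamma x_2$, and using $\alpha^q=\alpha$ for $\alpha\in\bF_q$ together with the freshman's dream in characteristic $2$, the expansion
\begin{equation*}
    c\,y_k^{q+1} \;=\; c\bigl(\alpha x_0^q+\beta x_1^q+\gamma x_2^q\bigr)\bigl(\alpha x_0+\beta x_1+\gamma x_2\bigr)
\end{equation*}
has $x_i^{q+1}$-coefficient equal to $c$ times a square of an entry of the row. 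Matching gives $c\alpha^2=c\beta^2=c\gamma^2=0$, and since $(\alpha,\beta,\gamma)$ is a nonzero row of $M^{-1}$, we conclude $c=0$.

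It then remains to extract a contradiction from $\sum_i M_{ik}F_i=0$. The six monomials $x_1 x_2^q,\,x_1^q x_2,\,x_2 x_0^q,\,x_2^q x_0,\,x_0 x_1^q,\,x_0^q x_1$ appearing across $F_0,F_1,F_2$ are pairwise distinct, so $F_0,F_1,F_2$ are linearly independent over $\bF_q$. Hence $M_{0k}=M_{1k}=M_{2k}=0$, meaning the $k$-th column of $M$ vanishes, contradicting $M\in\operatorname{GL}_3(\bF_q)$. The main obstacle is purely bookkeeping: one must correctly identify the singleton variable in each of the two splittings of $\{0,1,2\}$ and verify that the coefficient-matching in the expansion of $y_k^{q+1}$ really does pin down $c=0$; once this is done, the linear independence step closes the argument uniformly.
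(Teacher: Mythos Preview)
Your proof is correct and follows essentially the same route as the paper's first proof: both invoke Proposition~\ref{prop:smooth_d=q+2} to force $p=n=2$, then show that in the separated-variable coordinates one partial derivative vanishes identically, contradicting the linear independence of $F_0,F_1,F_2$ displayed in that proposition. The only difference is that you carry the coordinate change $M$ explicitly and prove $c=0$ via a monomial-matching computation, whereas the paper simply observes that the singleton piece is $a\,y_k^{q+2}$, so $\partial F/\partial y_k=(q+2)a\,y_k^{\,q+1}=0$ directly because $q+2\equiv 0\pmod 2$; your matching step is thus unnecessary, though not incorrect. The paper also records a second argument that avoids the explicit partials and instead exploits $F_{1,0}=0$ together with Euler's formula, which you do not pursue.
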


\begin{proof}
Let $X = \{F = 0\}\subset\bP^2$ be such an example. Assume, to the contrary, that it has separated variables. Then there exists a system of coordinates $\{x_0,x_1,x_2\}$, a polynomial $H = H(x_0,x_1)$, and a constant $c\neq 0$ such that
$$
    F = H(x_0,x_1) + cx_2^{q+2}.
$$
However, this implies $\frac{\partial F}{\partial x_2} = 0$, contradicting Proposition~\ref{prop:smooth_d=q+2}.
\end{proof}

As preparation for the proof of Theorem~\ref{mainthm:sep_var_d=1-mod-p}, let us recall an elementary fact about polynomial arithmetic.

\begin{lemma}
\label{lemma:poly-arith}
Let $k$ be an arbitrary field, $g,h\in k$ be nonzero constants, $r(t)\in k[t]$ be a polynomial of degree $m$, and $d$ be a positive integer. Suppose that
$$
    (gt^d + h)\cdot r(t) = at^{d+m} + b
$$
for some nonzero $a,b\in k$. Then $d$ divides $m$.
\end{lemma}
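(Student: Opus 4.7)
The plan is to expand $r(t) = \sum_{i=0}^m c_i t^i$ with $c_m \neq 0$ and compare coefficients on both sides of the identity $(gt^d + h) \cdot r(t) = at^{d+m} + b$. The right-hand side has nonzero coefficients only in degree $0$ and in degree $d+m$, so almost all of the coefficients of $(gt^d + h) r(t)$ must vanish, which will force a sparse structure on $r(t)$ that is compatible only with $d \mid m$.

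First I would read off the two extreme coefficients: the constant term gives $h c_0 = b$, so $c_0 = b/h \neq 0$ (using $b, h \neq 0$), and the coefficient of $t^{d+m}$ gives $g c_m = a \neq 0$, confirming $c_m \neq 0$. For the intermediate coefficients I split into two ranges. For $1 \leq j \leq d-1$ the coefficient of $t^j$ in the product equals $h c_j$, so it must vanish, giving $c_j = 0$. For $d \leq j \leq m$ the coefficient equals $g c_{j-d} + h c_j$, which yields the recurrence $c_j = -(g/h)\, c_{j-d}$.

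Combining the two, an induction on $j$ shows that, in the range $0 \leq j \leq m$, the coefficient $c_j$ is nonzero if and only if $j$ is a multiple of $d$: indeed, $c_{kd} = (-g/h)^k c_0 \neq 0$ for $kd \leq m$, while the vanishing in $[1, d-1]$ propagates through the recurrence to force $c_j = 0$ whenever $j$ is not a multiple of $d$. Since $c_m \neq 0$, this forces $d \mid m$, as claimed.

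The only subtlety is a small edge case: when $m < d$, the first range already forces $c_1 = \cdots = c_m = 0$, which is incompatible with $c_m \neq 0$ unless $m = 0$, and then $d \mid 0$ holds trivially. Beyond this bookkeeping there is no real obstacle; the argument is essentially a coefficient comparison followed by a linear recursion.
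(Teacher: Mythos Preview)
Your proof is correct and follows essentially the same coefficient-comparison approach as the paper's. The only difference is cosmetic: the paper also records the vanishing conditions coming from the high-degree coefficients (indices $m+1\leq j\leq d+m-1$) and then matches residue classes from both ends, whereas you reach the conclusion slightly more directly from the low-end vanishing, the recurrence $c_j=-(g/h)c_{j-d}$, and $c_m\neq 0$.
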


\begin{proof}[Proof sketch.]
The assertion can be verified by writing $r(t) = \sum_{i=0}^{m}r_it^i$ and then comparing the coefficients on both sides of the equation. We leave the details to the reader.
\end{proof}

\begin{proof}[Proof of Theorem~\ref{mainthm:sep_var_d=1-mod-p}]
If $F_{1,0} = 0$, then we are in case~\ref{main:d=q+1_n-odd} or \ref{main:d=q+2} of Theorem~\ref{mainthm:upper-bound}. If case~\ref{main:d=q+1_n-odd} occurs, then $d = q+1 \equiv 1\pmod{p}$. On the other hand, case~\ref{main:d=q+2} does not occur due to Lemma~\ref{lemma:d=q+2_not_sep_var}. This shows that the statement holds when $F_{1,0} = 0$.

Assume that $F_{1,0} \neq 0$. If $d = 1$, then there is nothing to prove, so we assume $d\geq 2$ in the rest of the proof. By hypothesis, there exists a coordinate system $\{x_0, \dots, x_n\}$ and an integer $m\in\{0,\dots,n-1\}$ such that
$$
	F(x_0, \dots, x_n)
    = G(x_0, \dots, x_m) + H(x_{m+1},\dots, x_n).
$$
The assumption that $d\geq 2$, along with the smoothness of $X$, ensures that $G$ and $H$ are not constantly zero. The assumption that $X$ is Frobenius nonclassical implies that there exists a polynomial $R$ such that $FR = F_{1,0}$, or equivalently,
\begin{equation}
\label{eqn:(G+H)R=G10+H10}
	(G+H)\cdot R = G_{1,0}+H_{1,0}.
\end{equation}

Let us extend $\bF_q$ to a function field
$
    k\colonequals
    \bF_q(u_0,\dots,u_n)
$
where $u_0,\dots,u_n$ are formal variables. Consider the line $\ell\subset\bP^n_k$ spanned by the points
$$
    [u_0:\cdots:u_m:0:\cdots:0]
    \qquad\text{and}\qquad
    [0:\cdots:0:u_{m+1}:\cdots:u_n],
$$
which can be expressed by the parametric equations with affine parameter $t$:
$$
    x_i = \begin{cases}
        u_it &\quad\text{for}\quad i = 0,\dots,m,\\
        u_i &\quad\text{for}\quad i= m+1,\dots,n.
    \end{cases}
$$
The restriction of $(G+H)$ to $\ell$ has the form
\begin{align*}
    (G+H)|_\ell
    &= G(u_0t,\dots,u_mt) + H(u_{m+1},\dots,u_n)\\
    &= G(u_0,\dots,u_m)t^d + H(u_{m+1},\dots,u_n)
    = gt^d + h
    \quad\text{where}\quad
    g, h\in k\setminus\{0\}.
\end{align*}
Similarly, $(G_{1,0}+H_{1,0})|_\ell = at^{d+q-1}+b$ for some $a, b\in k$. Hence, restricting \eqref{eqn:(G+H)R=G10+H10} to $\ell$ gives
$$
	(gt^d+h)\cdot r(t)
	= at^{d+q-1}+b
	\qquad\text{where}\qquad
    r(t) = R|_\ell\in k[t].
$$
The assumption $F_{1,0}\neq 0$ implies that $r(t)\neq 0$. Together with the fact that $g,h\neq 0$, one can verify that $a,b\neq 0$. Lemma~\ref{lemma:poly-arith} then implies that $d$ divides $q-1$.

According to \cite{Kle86}*{page 191}, the fact that $X$ is nonreflexive \cite{ADL21}*{Theorem~4.5} implies that $d\equiv 0$ or $1\pmod{p}$. If $d\equiv 0\pmod{p}$, then the fact that $d$ divides $q-1$ implies that $p$ divides $1$, a contradiction. Therefore, we must have $d\equiv 1 \pmod{p}$.
\end{proof}



\bibliographystyle{alpha}
\bibliography{Transversality}

\ContactInfo

\end{document}